  \def\rho{rho}
  \def\({}
  \def\){}
\let\underbar\underline
\newtheorem{theorem}{Theorem}[section]
\theoremstyle{plain}
\newtheorem{conjecture}[theorem]{Conjecture}
\newtheorem{corollary}[theorem]{Corollary}
\newtheorem{example}[theorem]{Example}
\newtheorem{lemma}[theorem]{Lemma}
\newtheorem{proposition}[theorem]{Proposition}
\newtheorem{remark}[theorem]{Remark}
\numberwithin{table}{section}
\numberwithin{equation}{section}
\theoremstyle{definition} 
\newtheorem{definition}[theorem]{Definition}
\newcommand{\R}{\mathbb{R}}
\newcommand{\E}{\mathbb{E}}
\let\todon\todo
\newcommand{\todonotes}[1]{\todon{\color{red}#1}}
\renewcommand{\todo}[1]{ \textbf{\color{Orange}(#1)} }
\newcommand{\spann}{\operatorname{span}}
\renewcommand{\Im}{\operatorname{Im}}
\newcommand{\ds}{d} % dimension of the signal
\newcommand{\TC}{T((\R^\ds))} % concat
\newcommand{\TS}{T(\R^\ds)} % shuffle
\newcommand{\arealb}[1]{\overleftarrow{\area}(#1)}
\newcommand{\leftPrelieSym}{\leftPrelie_{\operatorname{Sym}}}
\newcommand{\lift}[1]{\underline{#1}}
\newcommand{\lincont}{\mathcal{L}}
\newcommand{\deshuffle}{\Delta_{\shuffle}}
\newcommand{\conc}{\mathbin{\cdot}}
\newcommand{\coeval}{\mathsf{coeval}}
\newcommand{\deconc}{\Delta_{\conc}}
\newcommand{\lin}{\mathrm{L}}
\newcommand{\mtimes}{\boxtimes}
\newcommand{\mshuffle}{\mathbin{\lift{\shuffle}}}
\newcommand{\mdeshuffle}{\Delta_{\mshuffle}}
\setlist[enumerate]{label*=\arabic*.}
\DeclareMathOperator*{\binaryPlanarTrees}{\mathsf{BPT}}
\begin{document}

\def\word#1{{\color{blue}\mathtt{#1}}}
\let\alph\word

\newcommand{\area}{\mathsf{area}}
\newcommand{\Area}{\mathsf{Area}}
\newcommand{\ad}{\operatorname{ad}}
\newcommand{\Ad}{\operatorname{Ad}}

\newcommand{\grouplike}{G}
\newcommand{\primitive}{\mathfrak{g}}

\newcommand{\emptyWord}{e}

\newcommand{\evaluatedAt}[1]{\,\raisebox{-.5em}{$\vert_{#1}$}}
\newcommand{\id}{\operatorname{id}}

\newcommand{\eps}{\epsilon}

\newcommand{\leftPrelie}{\rhd}
\newcommand{\hs}{\mathbin{\succ}} % half-shuffle

\newcommand{\intnodes}{\operatorname{INT}}
\newcommand{\leaves}[1]{\lvert #1\rvert_{\scalebox{0.5}{$\mathsf{leaves}$}}}

\newcommand{\niceLikeExp}[1]{\qopname \relax o{#1}} % dunno what this does ..
\def\shuffleConcat{\mathbin{\scalerel*{\blacksquare}{o}}}
\def\shuffleConcatSymbol{{\scalerel*{\blacksquare}{o}}}
\newcommand{\im}{\operatorname{im}}

\newcommand\symmetricTensors{\Gamma}
\newcommand\symmetricAlgebra{\R}
\newcommand\gradedDual{{*\operatorname{gr}}}
\newcommand\TSgradedDual{{\TS^{\gradedDual}}}
\newcommand\anagrams{\mathsf{Anagrams}}
\newcommand{\gen}[2]{\left\langle #1;#2\right\rangle}
\newcommand{\quadsym}{\mathfrak{R}}
\newcommand{\quadsymlie}{\mathfrak{L}}
\newcommand{\quadprelie}{\mathfrak{P}}
\newcommand{\quaddend}{\mathfrak{D}}
\newcommand{\anagramdend}{\mathfrak{A}}

\newcommand\Rho{\mathfrak{r}}

\newcommand{\tortkara}{\mathscr{T}}
\newcommand{\areatortkara}{\mathscr{A}}
\newcommand{\tortop}{\mathsf{t}}
\newcommand{\tortjacobi}{\mathsf{J}}
\newcommand{\Inv}{\operatorname{Inv}}
\newcommand{\sign}{\mathsf{sign}}

\newcommand{\leftmultilie}{\boldsymbol{\lbrack}}
\newcommand{\rightmultilie}{\boldsymbol{\rbrack}_{\shuffleConcatSymbol}}
\newcommand\NEXTPAPER[1]{}
\renewcommand\todonotes[1]{}

\title{Areas of areas generate the shuffle algebra}
\author{Joscha Diehl\thanks{University of Greifswald}, Terry Lyons\thanks{Mathematical Institute, University of Oxford} \thanks{Alan Turing Institute, London}, Rosa Preiß\thanks{Institut für Mathematik, Technische Universität Berlin}, Jeremy Reizenstein\thanks{Facebook AI Research}}
\maketitle
\begin{abstract}%
  We consider the anti-symmetrization of the half-shuffle on words, which we call the `area' operator, since it corresponds to taking the signed area of elements of the iterated-integral signature.  The tensor algebra is a so-called Tortkara algebra under this operator.  We show that the iterated application of the area operator is sufficient to recover the iterated-integral signature of a path. Just as the ``information'' that the second level adds to the first one is known to be equivalent to the area between components of the path, this means that \emph{all} the information added by subsequent levels is equivalent to iterated areas.  On the way to this main result, we characterize (homogeneous) generating sets of the shuffle algebra. We finally discuss compatibility between the area operator and discrete integration and stochastic integration, and conclude with some results on the linear span of the areas of areas.
\end{abstract}

{\small\tableofcontents}
\section{Introduction}

\todonotes{

  - JR/RP: (improve?)  diagram of all the products

  - JR: symbol for foo on left and bar on right
  
  - RP: Precise Rocha citations for all Theorems and Definitions he already had in some form
}

We give a concise introduction here and spell out the notation more fully in the next section.
The shuffle algebra $\TS$ over $\ds$ letters is the vector space spanned by words in the letters $\word1,\dots,\word\ds$
with the commutative shuffle product. %We spell out the notation more fully in the next section.%
%\footnote{The notation used here is spelt out in Section \ref{sec:notation}.}
%
This is a free commutative algebra over the Lyndon words.
Put differently, it can be viewed a polynomial algebra in new commuting variables
$x_w$, where $w$ ranges over all Lyndon words. That is, as commutative algebras,
\begin{align*}
  \R[ x_w : \text{ $w$ is Lyndon} ]
  \cong
  \TS.
\end{align*}
The isomorphism is given by $  x_w \mapsto w \in \TS$.
%\begin{align*}
%  x_w \mapsto w \in \TS.
%\end{align*}
There are many more (free) generators known: any basis for the Lie algebra, coordinates of the first kind, \dots (compare Corollary \ref{cor:someShuffleGenerators}).

The relevance for iterated integrals is as follows.
Let $X: [0,T] \to\R^d$ be a (piecewise smooth) curve and let $f_i \in \TS, i \in I$, be a generating set of the shuffle algebra.
Then: any term in the iterated-integrals signature $S(X)_{0,T}$ (\cite{bib:Che1954}, \cite[Chapter 2]{bib:Lyo2007}) is a polynomial in the real numbers
\begin{align*}
  \Big\langle f_i, S(X)_{0,T} \Big\rangle, i \in I.
\end{align*}
Indeed, by assumption, any word $w$ can be written as
\begin{align*}
  w = P_\shuffle( f_i : i \in I ),
\end{align*}
where $P_\shuffle$ is some shuffle polynomial in finitely many of the $f_i$.
By the shuffle identity we then get
\begin{align*}
  \Big\langle w, S(X)_{0,T} \Big\rangle
  &=
  \Big\langle P_\shuffle( f_i: i \in I ), S(X)_{0,T} \Big\rangle
  =
  P\left( \Big\langle f_i, S(X)_{0,T} \Big\rangle : i \in I \right),
\end{align*}
where $P$ is the corresponding polynomial expression in the real numbers $\langle f_i, S(X)_{0,T} \rangle$, $i\in I$.
The latter numbers then contain all the information
of the iterated-integrals signature, since every iterated integral
is a polynomial expression in them.

We are interested in whether there is a shuffle generating set in terms of ``areas of areas''.
Define the following bilinear operation on $\TS$\index{area@$\area$}
\begin{align*}
  \area(x,y) := x \hs y - y \hs x,
\end{align*}
where $\hs$ denotes the half-shuffle.
For $v,w \in \TS$, let %$V_t = \Big\langle v, S(X)_{0,t} \Big\rangle$
%and  $W_t = \Big\langle w, S(X)_{0,t} \Big\rangle$
\begin{align*}
  V_t &= \Big\langle v, S(X)_{0,t} \Big\rangle && \text{and} &
  W_t &= \Big\langle w, S(X)_{0,t} \Big\rangle,
\end{align*}
and define \index{Area@$\Area$}%\todonotes{Rosa: put reference / explanation here?}
\begin{align*}
  \Area(V,W)_t &:= \int_0^t \int_0^s dV_r dW_s - \int_0^t \int_0^s dW_r dV_s.
%\end{align*}
\shortintertext{We then have}
%\begin{align*}
  \Area(V,W)_t &= \Big\langle \area(v,w), S(X)_{0,t} \Big\rangle.
\end{align*}
Our naming of $\area$ and $\Area$ stems from the fact that $\Area(V,W)$ is (two times)
the signed area (see \Cref{fig:area}) enclosed by the two-dimensional curve $(V,W)$, \cite{bib:LP2006}.
Note that the antisymmetrization $\Area(V,W)_t$ of the Riemann-Stieltjes integral (where the Riemann-Stieltjes integral forms a Zinbiel algebra on a suitable space of functions $V$ with $V_0=0$) has already been looked at as an algebraic operation by Rocha in 2003 in \cite[Equation (7)]{bib:Roc2003}, in \cite[Equation (6.11)]{bib:Roc2003b} and in 2005 in \cite[Equation (2.4)]{bib:RocNCSF}, it was even already noted by Rocha \cite[page 321]{bib:Roc2003}, \cite[page 3]{bib:RocNCSF} that the operation $\Area$ except being antisymmetric does not satisfy any additional identity of order three.

The following question is inspired by a remark made by T.L. during a talk in 2011:
\begin{center}
  \textit{Is repeated application of the $\Area$ operator enough to get the whole signature of a path $X$?}
\end{center}

For $d=2$ and the first two levels, this is quickly verified.
%A quick sanity check up to level $2$ for $d=2$ reveals that this is \emph{not} the case.
%
%Indeed, on level $1$ we get $\int dX^1$ and $\int dX^2$ and on level $2$
%we get only $\Area(X^1,X^2) = \int dX^1 dX^2 - \int dX^2 dX^2$.
%
%On the algebra side this translates to $\word 1, \word 2$ and $\word{12} - \word{21}$.
%Clearly we are missing, for example, the word $\word{11}$.
%
%But: if we assume we are allowed to take polynomials in all the ``areas of areas'',
%this counterexample evaporates:
%we get
We start with the increments themselves, which we assume to be given (we think
of them as ``$0$-th order'' areas), which are   $\int dX^1$ and $\int dX^2$.
%\begin{align*}
%  \int dX^1 & \int dX^2.
%\end{align*}
Then we can write, using integration-by-parts,
\begin{align*}
  \iint dX^1 dX^1 &= \frac12 \int dX^1 \cdot \int dX^1 \\
\iint dX^2 dX^2 &= \frac12 \int dX^2 \cdot \int dX^2 \\
\iint dX^1 dX^2 &= \frac12 \Big( \iint dX^1 dX^2 - \iint dX^2 dX^1 + \int dX^1 \cdot \int dX^2 \Big) \\
  \iint dX^2 dX^1 &= \frac12 \bigg( - \Big(\iint dX^1 dX^2 - \iint dX^2 dX^1\Big) + \int dX^1 \cdot \int dX^2 \bigg).
\end{align*}
and hence get all iterated integrals up to order $2$.

Products of integrals become, on the algebra side, $\shuffle$-products. This reads as
\begin{align*}
  %\word{1} \\
  %\word{2} \\
  %\word{12} - \word{21} \\
  \word{11} &= \frac12 \word1 \shuffle \word1 &
  \word{22} &= \frac12 \word2 \shuffle \word2 \\
  \word{12} &= \frac12 \left( \area(\word1,\word2) + \word1 \shuffle \word2 \right) &
  \word{21} &= \frac12 \left( - \area(\word1,\word2) + \word1 \shuffle \word 2 \right)%.\\
  %\word{112} &=\frac16\left(\area(\word{1},\area(\word{1},\word{2}))+\word{1}\shuffle\area(\word{1},\word{2})+\word{1}\shuffle\word2\shuffle\word3 \right)
\end{align*}
In general, however, the expansion is non-unique, as the following example illustrates:
%\todonotes{We have the same example in remark \ref{rem:areasShuffleGenerate}}
\begin{align*}
    \word{123}&=\tfrac13\area(\word1,\area(\word2,\word3))+\tfrac16\area(\area(\word1,\word3),\word2)
    +\tfrac13\word1\shuffle\area(\word2,\word3)
    \\&\quad-\tfrac16\word2\shuffle\area(\word1,\word3)+\tfrac12\word3\shuffle\area(\word1,\word2)
    +\tfrac16\word1\shuffle\word2\shuffle\word3\\
    &=\tfrac1{12}\area(\word1,\area(\word2,\word3))-\tfrac1{12}\area(\area(\word1,\word3),\word2)+\tfrac14\area(\area(\word1,\word2),\word3)
    \\&\quad+\tfrac1{12}\word1\shuffle\area(\word2,\word3)+\tfrac1{12}\word2\shuffle\area(\word1,\word3)+\tfrac14\word3\shuffle\area(\word1,\word2)
    +\tfrac16\word1\shuffle\word2\shuffle\word3
\end{align*}

To formulate the problem algebraically,
let $\areatortkara \subset \TS$ \index{A@$\areatortkara$} be the smallest linear space containing the letters $\word{1}, \dots, \word{d}$
that is closed under the (bilinear, non-associative) operation $\area$.
The question then becomes:
\begin{center}
  \textit{Is $\areatortkara$ a generating set for the shuffle algebra $T(\R^d)$?}
\end{center}
The affirmative answer to this question is given in this paper.

%
%This two stage formulation makes sense, since ``taking the area''
%is something one would hopefully in the physical world.
What we really have in mind here is a two-stage numerically-stable procedure for calculating the signature of a physical path.
In the first stage one calculates areas, areas of areas and so forth,
possibly using an \emph{analog physical apparatus}.%
\footnote{One physical device that has historically been used to measure area is a planimeter \cite{bib:FS2007}.
    In general, this is related to nonholonomic control, see for example \cite{bib:BD2015}.}
The second stage uses these measurements, say on a \emph{digital computer}, and computes polynomial expressions in these.

The rest of the paper is structured as follows.
In the next subsection we fix notation.
In Section \ref{sec:dynkinOperator}
we revisit results by Rocha \cite{bib:Roc2003} in purely algebraic terms.
The outcome of this is a formula for the Dynkin operator applied
to the signature. This make the areas operator appear naturally.
Together with Section \ref{sec:shuffleGenerators} this will
prove the generating property of areas-of-areas.
%As a corollary, this proves the generating property of areas-of-areas.

For completeness, we show in Section \ref{sec:coordinates} how to express coordinates 
of the first kind using only areas-of-areas.
Again, this is basically a purely algebraic revisiting of results by Rocha,
in which we also correct some of the expressions he gives.

In Section \ref{sec:shuffleGenerators}
we state a general condition for a set of polynomials
to be (free) generators of the shuffle algebra $\TS$.
%This condition is implicitly known in the literature,
%but seems to never been explicitly stated.
%
We then show how a couple of well-known generators fall into this formulation
and, how using Section \ref{sec:dynkinOperator} (or \ref{sec:coordinates}),
the generating property of areas-of-areas is established in \Cref{cor:areasShuffleGenerate}.
The proof of that Corollary can also serve as a good roadmap for exploring the entire paper.

Apart from its geometric interpretation,
the area operation possesses some interesting properties.
Some of them we present in Section \ref{sec:applications},
where it is shown that it is nicely compatible with discrete integration
as well as stochastic integration.
% \todonotes{ before submission: elaborate some }
In Section \ref{sec:linear} we collect some
results on the linear span generated by the area operator,
as it is of interest in its own right.

%\medskip
%\todonotes{this has to be properly formulated still...}\textcolor{orange}{\textbf{A note on the status of the paper.} You are reading a fist version of the paper... We will prepare a more concise version... }

\medskip

\textbf{Acknowledgements.}

T.L. is supported by the EPSRC under the program grant EP/S026347/1 and by the Alan Turing Institute under the EPSRC grant EP/N510129/1.
R.P. is supported by European Research Council through CoG-683164 and was
affiliated to Max Planck Institute for Mathematics in the Sciences, Leipzig, in
autumn 2018.

T.L. and J.D. would like to thank Danyu Yang for many insightful discussions and
for realizing that it is natural to think of shuffle algebra as a space of
operators or sensors that take paths to scalar paths and the consequent insight
that it is completely natural to interpret the half-shuffles, and area etc. as
binary path operations, and so connect the structures of dendriform algebras
with the tensor/shuffle algebra structures in this way.

We would like to thank Cristopher Salvi for extended discussions, valuable presentations and
new insights on the area operator, in particular its Jacobian
bracketing (which helped shape the interpretation as signed volume) and the
interplay with the shuffle product, and on coordinates of the second kind.

Tensor algebra and tree computations for this project have been done in \texttt{python} and \texttt{sage}, where besides standard packages and further custom code by the authors the \texttt{python} package \nolinkurl{free_lie_algebra.py} \cite{bib:Rei2021}, which implements a lot of the definitions in \cite{bib:Reu1993}, has been of central use.

\begin{figure}[h]
  \centering
  \begin{tikzpicture}
    \coordinate (start) at (0,0);
    \coordinate (leftextreme) at (-1, -1.5);
    \coordinate (first) at (2,-3);
    \coordinate (mirrorsecond) at (6,0.8);
    \coordinate (second) at (8.3,0.9);
    \coordinate (end) at (9,-1);

    \draw [thick, pattern=north east lines, pattern color=red] (start) .. controls (0,-0.8) and ($(leftextreme)-(0,-0.8)$) .. (leftextreme) .. controls ($(leftextreme)+ (0,-0.8)$)
    and ($(first)-(2,0.8)$) .. (first) .. controls ($(first)+(3,1.2)$)
    and ($(mirrorsecond)-(0.3,1.2)$) .. (mirrorsecond) .. controls ($(mirrorsecond)+(0.3,1.2)$)
    and ($(second)-(0.5,-1)$) .. (second) .. controls ($(second)+(0.5, -1)$) and ($(end)-(0.3, -0.9)$) .. (end);

     \draw [thick, pattern=north west lines, pattern color=blue] (start) .. controls (0,-0.8) and ($(leftextreme)-(0,-0.8)$) .. (leftextreme) .. controls ($(leftextreme)+ (0,-0.8)$)
    and ($(first)-(2,0.8)$) .. (first);

    \draw (start) -- (end);
    \draw [blue] (start) -- (first);
    \draw (start) -- (second);

    \def\circ#1{\draw[fill]#1 circle (2pt);}
    \circ{(start)}
    \circ{(first)}
    \circ{(second)}
    \circ{(end)}
    \node at (-0.4,0.4) {$X(0)$};
    \node at ($(first)+(0.4,-0.4)$) {\color{blue}$X(t_1)$};
    \node at ($(second)+(0.7,0)$) {$X(t_2)$};
    \node at ($(end)+(0.7,0)$) {$X(T)$};

    \node at (2.6, -1.3) {\color{red}\Huge $+$};
    \node at (7.3, 0.3) {\color{red}\Huge $-$};

    \draw [shift={(5.67,-0.1)}, scale=0.3,rotate = 250] [fill] (0.4,-0.3) -- (-0.5,0) -- (0.4,0.3) -- cycle    ;
    
  \end{tikzpicture}
  \caption{The signed area of a curve $X$, shown at points $t=t_1$ (shaded blue) and at $t=T$ (shaded red).}
  \label{fig:area}
\end{figure}

\subsection{Notation}
\label{sec:notation}

\newcommand{\bracket}{\mathsf{lie}}

Denote by $\TC$ \index{TC@$\TC$} the space of formal \emph{infinite} linear combinations of words in the letters $\word{1}, .., \word{d}$.
Equip it with the \textbf{concatenation product} $\conc$ (often we write $b\conc b' = bb'$).

Denote by $\TS$ \index{TS@$\TS$} its dual, the space of \emph{finite} linear combinations of words.
Equip it with the \textbf{shuffle product} $\shuffle$. \index{sha@$\shuffle$} \index{<h@$\hs$}
It decomposes as
\begin{align*}
  a \shuffle a' = a \hs a' + a' \hs a,
\end{align*}
where $\hs$ is the \textbf{half-shuffle}.
The half-shuffle is defined on words $a = a_1 \dots a_m, b = b_1 \dots b_n$, where $b$ is not the empty word,
as
\begin{align*}
  a \hs b
  =
  (a \shuffle b_1 \dots b_{n-1}) \conc b_n.
\end{align*}

The dual pairing is written for $a \in \TS, b \in \TC$ as \index{<@$\Big\langle\cdot,\Big\rangle$}
\begin{align*}
  \Big\langle a, b \Big\rangle.
\end{align*}

Denote the grouplike elements of $\TC$ by $\grouplike$.\index{G@$\grouplike$}
Denote the primitive elements, or Lie elements, of $\TC$, i.e.~the free Lie algebra, by $\primitive$.\index{g@$\primitive$}

\newcommand\proj{\operatorname{proj}} \index{proj@$\proj_n$ and $\proj_{\ge n}$}
Denote by $\proj_n, \proj_{\ge n}, $ etc, the projection on $\TC$ to level $n$, to levels larger equal to $n$, \dots
We write $T_n((\R^d)) = \proj_n \TC, T_{\ge n}((\R^d)) = \proj_{\ge n} \TC, $ etc. \index{T@$T_n((\R^d))$ and $T_{\ge n}((\R^d))$}
Denote the empty word by $\emptyWord$.\index{ee@$\emptyWord$}

\newcommand{\TSTCRing}{\mathcal R\langle\langle\word1, \dots,\word d\rangle\rangle}
\newcommand{\TSTCRingFinite}{\mathcal R\langle\word1,\dots,\word d\rangle}
\newcommand\TSTC{\mathcal W}\index{TTT@$\TSTC$}

Denote by $\TSTCRingFinite$ 
the free tensor algebra over $d$ generators with coefficients in the ring $\mathcal R$, where $\mathcal R := (T(\R^d),\shuffle)$,
and by $\TSTCRing$ the corresponding space of tensor series.
We then canonically have $\TSTCRingFinite\subsetneq\TSTCRing$,
and identify the $\mathcal R$-algebra $\TSTCRing$ with
\begin{align}
  \label{eq:TSTC}
  \TSTC 
  %:= \TS \mathbin{\hat\otimes} \TC
  :=\prod_{n=1}^\infty T(\R^d)\otimes T_n(\R^d),
\end{align}
where we use the shuffle product on the left and the concatenation product on the right.
We denote the product on both $\TSTCRing$ and $\TSTC$, which are isomorphic as $\R$-algebras,
by $\shuffleConcat$.\index{.@$\shuffleConcat$}
The $\mathcal{R}$-subalgebra $(\TSTCRingFinite,\shuffleConcat)$ is then $\R$-algebra-isomorphic to $(\TS\otimes\TS,\shuffleConcat)$.
%\todonotes{Rosa: Isomorphic in which category? Is it even a Hopf algebra over the category of $\mathcal R$ modules?
%jd: I think what we claim (and need): isomorphic as $\R$-algebras i.e. as algebras over the base field. }

We use the usual grading on $\TSTCRing$,
that is in the representation \eqref{eq:TSTC}, for $a,b$ words, $|a \otimes b| := |b|$.
Then, the projection $\proj_n$ makes also sense on $\TSTC$.

%\todonotes{ RP }
% In fact, $\TSTCRing$ even forms a topological (?) Hopf algebra as an $\mathcal R$-module, i.e. the $\shuffleConcat$ product is continuously $\mathcal R$-bilinear and the coproduct $\mdeshuffle$ is continuously $\mathcal R$ linear and maps to the graded completion (?) of the $\mathcal R$-module-tensor product $\mtimes$: 

We furthermore introduce an $\mathcal{R}$-linear coproduct on $\TSTCRing$, which maps to the graded completion of the $\mathcal R$-module tensor product $\mtimes$:
\begin{equation*}
 \mdeshuffle:\,\TSTCRing\to\TSTCRing\mathbin{\hat\mtimes}\TSTCRing:=\prod_{m,n=1}^\infty\proj_m\TSTCRing\mtimes\proj_n\TSTCRing,
\end{equation*}
where the unshuffle coproduct on $\TSTCRing$ is defined via the usual unshuffle coproduct as
\begin{equation*}
 %\mdeshuffle(v\otimes w)=v\otimes\deshuffle w=v\otimes \sum_{(w)}^{\shuffle}w_1\otimes w_2.
 \mdeshuffle\Big(\sum_w a_w\lift{w}\Big):=\sum_w a_w\mdeshuffle \lift{w}:=\sum_w a_w\sum_{(w)}^\shuffle \lift{w_1}\mtimes\lift{w_2},
\end{equation*}
where the last Sweedler summation is well defined by the unshuffle coproduct on $\TS$ because there is a unique $\R$-linear map $\TS\otimes\TS\to\TSTCRing\hat{\mtimes}\TSTCRing$ characterized by sending each tensor pair of words $w\otimes v$ to $\lift{w}\mtimes\lift{v}$ (which is however non-surjective). We have the isomorphism
\begin{equation*}
 \prod_{m,n=1}^\infty \TS\otimes T_m(\R^d)\otimes T_n(\R^d)\cong\TSTCRing\mathbin{\hat\mtimes}\TSTCRing
\end{equation*}
as $\R$ vector spaces given by the map
\begin{equation*}
 \sum_{w,v} a_{w,v}\otimes w\otimes v\mapsto\sum_{w,v} a_{w,v}\,(\lift{w}\mtimes\lift{v})=\sum_{w,v}(a_{w,v}\lift{w})\mtimes\lift{v}=\sum_{w,v}\lift{w}\mtimes(a_{w,v}\lift{v}).
\end{equation*}
The unshuffle coproduct on $\TSTCRing$ is an $\mathcal{R}$-algebra homomorphism as a consequence of the homomorphism property of the usual unshuffle coproduct, as for words $w,v$ we have
\begin{align}\label{eq:mdeshuffle_homomorphism}
\mdeshuffle(p\lift{w}\shuffleConcat q\lift{v})
&=(p\shuffle q)\mdeshuffle\,\lift{w\conc v}
=(p\shuffle q) \sum_{(w),(v)}^\shuffle \lift{w_1\conc v_1}\mtimes\lift{w_2\conc v_2}\notag\\
&=\sum_{(w),(v)}^\shuffle(p\lift{w_1}\shuffleConcat q\lift{w_2})\mtimes(\lift{w_2}\shuffleConcat\lift{w_1})
=\Big(\sum_{(w)}^\shuffle p\lift{w_1}\mtimes\lift{w_2}\Big)\mathbin{\tilde{\shuffleConcat}}\Big(\sum_{(v)}^\shuffle q\lift{v_1}\mtimes\lift{v_2}\Big)\notag\\
&=(\mdeshuffle p\lift{w})\mathbin{\tilde{\shuffleConcat}}(\mdeshuffle q\lift{v}),
\end{align}
where
\begin{equation*}
 \Big(\sum_{w_1,v_1}a_{w_1,v_1}\lift{w_1}\mtimes\lift{v_1}\Big)\mathbin{\tilde{\shuffleConcat}}\Big(\sum_{w_2,v_2}b_{w_2,v_2}\lift{w_2}\mtimes\lift{v_2}\Big):=\sum_{w_1,v_1,w_2,v_2}(a_{w_1,v_1}\shuffle b_{w_2,v_2})(\lift{w_1}\shuffleConcat\lift{w_2})\mtimes(\lift{v_1}\shuffleConcat\lift{v_2})
\end{equation*}
is the usual induced product on the tensor product. 
When restricting to $\TSTCRingFinite$, we have $\mdeshuffle:\TSTCRingFinite\to\TSTCRingFinite\mtimes\TSTCRingFinite$
and the other compatibility relations of a Hopf algebra are checked along the same lines, so we indeed get an $\mathcal{R}$-Hopf algebra $(\TSTCRingFinite,\shuffleConcat,\mdeshuffle,\lift{\alpha})$, a Hopf algebra in the category of $\mathcal{R}$-modules, with antipode
\begin{equation*}
 \lift{\alpha}\Big(\sum_w a_w\lift{w}\Big)=\sum_w (-1)^{|w|}a_w\lift{\overleftarrow{w}},
\end{equation*}
where $\overleftarrow{w}$ is $w$ written backwards.

Now, since we have the homomorphism property of the unshuffle $\mdeshuffle$ on $\TSTCRing$ according to Equation \eqref{eq:mdeshuffle_homomorphism}, and furthermore
\begin{equation*}
 \mdeshuffle(\lift{\word{i}})=\lift{\emptyWord}\mtimes\lift{\word{i}}+\lift{\word{i}}\mtimes\lift{\emptyWord}
\end{equation*}
for any letter $\lift{\word{i}}$ in $\TSTCRing$, our $\mdeshuffle$ is exactly the coproduct from \cite{bib:Reu1993} for the choice of $K$ as the unital commutative ring $\mathcal{R}$ with characteristic zero. 
Thus, we may apply all the theory in Reutenauer's book valid for the general setting of a unital commutative ring of characteristic zero to $\TSTCRing$. In particular, we get that the group
\begin{align*}
 \lift{G}=\{g\in\TSTCRing|\mdeshuffle g=g\mtimes g,\, g\neq 0\}
\end{align*}
with product $\shuffleConcat$ \cite[Corollary 3.3]{bib:Reu1993} and the $\mathcal{R}$-Lie-algebra
\begin{align*}
 \lift{\mathfrak{g}}=\{x\in\TSTCRing|\mdeshuffle x=\lift{\emptyWord}\mtimes x+x\mtimes\lift{\emptyWord}\}
\end{align*}
with Lie bracket $[x,y]_{\shuffleConcatSymbol}:=x\shuffleConcat y-y\shuffleConcat x$ are in a one-to-one correspondence \cite[Theorems 3.1 and 3.2]{bib:Reu1993} via the exponential map
\cite[Equation (3.1.2)]{bib:Reu1993}
\begin{equation*}
 \exp_{\shuffleConcat}:\lift{\mathfrak{g}}\to\lift{G},
 \quad\exp_{\shuffleConcat}(x)
 =\lift{e}+\sum_{n=1}^{\infty}\frac{x^{\shuffleConcatSymbol n}}{n!}
\end{equation*}
with inverse the logarithm 
\cite[Equation (3.1.1)]{bib:Reu1993}
\begin{equation*}
 \log_{\shuffleConcat}:\lift{G}\to\lift{\mathfrak{g}},
 \quad\log_{\shuffleConcat}(g)
 =\sum_{n=1}^\infty(-1)^{n+1}\frac{(g-\lift{\emptyWord})^{\shuffleConcat n}}{n}.
\end{equation*}

Analogous to $G$ and $\mathfrak{g}$, we call the elements of $\lift{G}$ grouplike and the elements of $\lift{\mathfrak{g}}$ primitive.

Note however that $(\TSTCRingFinite,\shuffleConcat,\mdeshuffle)$ does not form a $\R$ Hopf algebra.

%{\color{orange}
%Denote by 
%$\homogeneous$ the space of elements in the 
%complete tensor product $\TS \hat\otimes \TC$,
%that has words of equal size on both sides of the tensor product.
%%
%To be specific, an element in $\homogeneous$ can be written as the formal sum
%\begin{align*}
%  \sum_{w,v} c_{w,v} w \otimes v,
%\end{align*}
%where $w,v$ sum over all words and $c_{w,v} \in \R$ is zero if $|w| \not= |v|$.
%%
%Note that the projection $\proj_n$ makes also sense on $\homogeneous$.
%}

%On $\homogeneous$ we shall mostly
%use the shuffle product
%on the left and concatenation product on the right;
%denote this product by $\shuffleConcat$.\index{.@$\shuffleConcat$}

\newcommand{\eval}{\mathsf{eval}} %JR make this arrow above
Fixing $x \in \TC$, 
% define for words $a,b$\index{eval@$\eval_x$}%
% \begin{align*}
%   \eval_x( a \otimes b ) := \Big\langle a, x \Big\rangle\ b,
% \end{align*}
% and extend linearly to $\TSTC$.
define for any $F=\sum_{w}a_w\otimes w\in\TSTC$, where $a_w\in\TS$ for all words $w$,
\begin{equation*}
 \eval_x(F):=\sum_{w}\langle x,a_w\rangle\ w \in\TC.
\end{equation*}
This operation now forms an associative algebra isomorphism from $(\TSTC,\shuffleConcat)$ to $(\lincont(\TC,\TC),\ast)$, where $\lincont(\TC,\TC)$ denotes the linear maps from $\TC$ to $\TC$ which are continuous in the product topology and $\ast$ denotes the convolution product of the Hopf algebra $(\TS,\conc,\deshuffle)$ extended to $\TC$. Indeed, for any $F,G\in\TSTC$, we have $\eval(F),\eval(G)\in\lincont(\TC,\TC)$ by definition with
\begin{equation*}
 \eval(F\shuffleConcat G)=\eval(F)\ast\eval(G),
\end{equation*}
since for $F=\sum_w a_w\otimes w$, $G=\sum_{w'} b_{w'}\otimes {w'}$, $a_w,b_w\in\TS$ for all words $w$, and $x\in\TC$, %we have that
% \begin{equation*}
%  \eval_x(F)=\sum_w\langle x,a_w\rangle\,w\in\TC
% \end{equation*}
% is well-defined and furthermore
\begin{align*}
 \eval_x(F\shuffleConcat G)&=\sum_{w,w'}\langle x,a_w\shuffle b_{w'}\rangle\,w\conc w'=\sum_{w,w'}\langle\deshuffle x,a_w\otimes b_{w'}\rangle\,w\conc w'\\
 &=\sum_{(x)}^{\shuffle}\sum_{w}\langle x_1,a_w\rangle\,w\conc \sum_{w'}\langle x_2,b_{w'}\rangle w'=\sum_{(x)}^{\shuffle}\eval_{x_1}(F)\conc\eval_{x_2}(G)\\
 &=\mathsf{conc}(\eval(F)\otimes\eval(G))\deshuffle x=(\eval(F)\ast\eval(G))(x),
\end{align*}
where $\sum_{(x)}^\shuffle x_1\otimes x_2:=\deshuffle x$ is Sweedler's notation and $\mathsf{conc}:\,\TC\mathbin{\hat\otimes}\TC\to\TC$ is the continuous linear map corresponding to the bilinear map $\conc$.

Likewise, for arbitrary $y\in\TS$ and $F=\sum_{w}a_w\otimes w\in\TSTC$, $a_w\in \TS$, we define
\begin{equation*}
 \coeval^y(F):=\sum_{w}\langle w,y \rangle a_w\in T(\R^d).
\end{equation*}
%and extend it linearly continuously to $\TSTC$. 
Then, $\coeval$ forms an isomorphism from $(\TSTC,\shuffleConcat)$ to $(\lin(\TS,\TS),\star)$, where $\lin(\TS,\TS)$ denotes all linear maps from $\TS$ to itself and $\star$ is the convolution product of the Hopf algebra $(\TS,\shuffle,\deconc)$.

We refer to \cite{bib:Reu1993} for more details on all of this,
except for the half-shuffle, for which a nice entry point to the literature is for example \cite{bib:FP103}.

%{\color{red}PUT SOMEWHERE}
%
%First of all, let us consider $T((\R^d))$.
%
%Equivalent definitions
%
%a) As an algebra:
%   $\R[[x_1, .., x_d]] = \lim_{\leftarrow} \R[x_1, .., x_d] / I^n$ the $I$-adic completion of the polynomial ring in $d$ variables.
%   Here $I = \langle x_1, .., x_d \rangle$ is the maximal ideal.
%
%   Open neighborhoods of $r \in \R[x_1, .., x_d]$ are given by coset $r + I^n$, any $n \ge 1$.
%
%   Compare: Reutenauer p.17, Eisenbud Chapter 7, 
%   Chapter 3 in Majewski - Rational Homotopical Models, 
%   \url{https://mathoverflow.net/questions/102557/completion-and-tensor-product-of-algebras},
%   Fresse - Homotopy of operads, Section 7.
%   For general topological considerations in rings, see
%   Arnautov, Glavatsky, Mikhalev - 1996 - Introduction to the theory of topological rings and modules.

%
%b) As vector space: $\prod_{n \ge 1} (\R^d)^{\otimes n}$
%
%c) {\color{red}What else?}
%
%It can be turned into a \textbf{complete Hopf algebra}, see Appendix A in Quillen - Rational homotopy.
%
%%What is a basis? (Hamel, ..)
%
%Let $V, W$ be - possibly infinite dimensional - vector spaces.
%
%Then $V \otimes W$ is \dots

\subsection{Objectives}
\subsubsection{Revisiting the work of Rocha on coordinates of the first kind}
In Sections \ref{sec:dynkinOperator} and \ref{sec:coordinates}, we show how the area operation appears naturally in a purely algebraic formulation of the work of Rocha on coordinates of the first kind. What we may take over from Rocha here is a very interesting network of bilinear operations on $\TSTC$ refining the basic $\shuffleConcat$ product. It is based on a dendriform structure, as the following diagram (cf.\ \cite[diagram page 320]{bib:Roc2003}, \cite[Diagram 5]{bib:Roc2003b}) and definition show:

\begin{center}
% https://tikzcd.yichuanshen.de/#N4Igdg9gJgpgziAXAbVABwnAlgFyxMJZABgBoBGAXVJADcBDAGwFcYkQAdDuZgY15gBHEAF9S6TLnyEU5CtTpNW7LmgBOMAcLETseAkQBMpQwoYs2iThwBGjerwDWcQc3obR4kBj3SjpYjMlS2s1AAsoT10pAxQAZnkac2UrZC5eKAgcUnTMnEoAfS47B2dXdzYdb0l9GWQEwKTglQ5wqALgLgg0GDV6HAg1MHoAWxhgAGUATxGRESjq31jkOVMmi3YAagWfGLq5RsUNqy4cemYAAgBaHZq-FAAWRKOU6xwYAA8cYAAZLBh5lVdrUiAk1i8QqdPt96HA4BBeFh+lhaADbks6sYqOtXlCvsB1DALn80UC7ssnodkpCOO98QB+RmAhQwKAAc3gRFAADM1BARkgyCABkgAOw0MIweiRKyQMCVLy8-mCmgixAANglUpl4AICp5fIFiDkwogYq10vYcv1ICVRpNas1IElltleoWdvNpqQcSqno1qrNiEMfsNPsDSCeEPYAAorgBKEA0ew2GCMAAK5JkIDUWDZYRwHrDwYjiCj1NjCaTIBTaczGPYjBg3MLoeViAS3sQAFZk-RUxms43m4WcSEY5tE22jQBOUvGGtYeXsKCwyWRC0661F9sADlLUcYS5Cq7g6+rLq37unKq7JsvVvdyePK7XrOrFZOtOhwAusDAUC5tygyzNWtaDg2VhLtgsA7vaQpqp2D5usuz7LlYp7njexoOkGvaLuhICYe+m6PsuIiUCIQA
\begin{tikzcd}
                                                                                                             &[2.5ex] \tau - \arrow[r]                              & \leftPrelie \arrow[rd, "(-)"'] \arrow[r, "(+)"]  & \leftPrelieSym                     & \text{{\color{gray}unknown}} \arrow[l, no head, dashed] \\[2ex]
\succeq \arrow[ru, no head] \arrow[rd, no head] \arrow[r, "\text{dendriform}" description, no head, dashed] & \preceq \arrow[u, no head] \arrow[d, no head] & \text{pre Lie} \arrow[u, no head, dashed] & {[\cdot,\cdot]_\shuffleConcatSymbol}                  & \text{Lie} \arrow[l, no head, dashed] \\[2ex]
                                                                                                             & + \arrow[r]                                   & \shuffleConcat \arrow[ru, "(-)"]            & \text{associative} \arrow[l, no head, dashed] &                                      
\end{tikzcd}
\end{center}

For $A=p\otimes q$, $B=p'\otimes q'$,
\begin{align*}
 A\succeq B&:=(p\hs p')\otimes(q\conc q'),\\
 A\preceq B&:=(p'\hs p)\otimes(q\conc q'),\\
 A\shuffleConcat B&:=A\succeq B+A\preceq B=(p\shuffle p')\otimes(q\conc q'),\\
 A\leftPrelie B&:=A\succeq B-B\preceq A=(p\hs p')\otimes[q,q'],\\
 A\leftPrelieSym B&:=A\leftPrelie B+B\leftPrelie A=A\succeq B+B\succeq A-A\preceq B-B\preceq A=\area(p,p')\otimes[q,q'],\\
 [A,B]_{\shuffleConcatSymbol}&:=A\shuffleConcat B-B\shuffleConcat A=A\leftPrelie B-B\leftPrelie A=A\succeq B+A\preceq B-B\succeq A-B\preceq A\\
 &=(p\shuffle p')\otimes[q,q'].
\end{align*}

In fact, one could describe this network for any dendriform structure, but Rocha's and our work offer a promising first usecase 
for talking about all of these operations together, 
while this system of operations \emph{without $\leftPrelieSym$} has been explored before e.g. in \cite{bib:KM2009}. 
The symmetrized pre-Lie operation $\leftPrelieSym$ stays the most mysterious also to us, 
we may only point to the discovery of Bergeron and Loday in \cite{bib:BL2011} that the symmetrization of pre-Lie does not in general satisfy any further identities except non-associative commutativity,
though since the pre-Lie product $\leftPrelie$ certainly isn't free we except some kind of relations for $\leftPrelieSym$ also,
but this is still a question of future work.

With the area operation forming the left part of the symmetrized pre-Lie operation $\leftPrelieSym$, we obtain our main argument to show that the set of all areas of areas forms a shuffle generating set, albeit not a minimal one.

\subsubsection{Areas of areas and further shuffle generating sets}

With our main focus being shuffle-generating sets in terms of areas of
areas, in Section \ref{sec:shuffleGenerators} we first give a general criterion
Lemma \ref{lem:lieAlgebra} for (homogeneous) subsets to form a shuffle generating
set (resp. a free shuffle generating set).
The condition being that the set contains (resp. forms) a dual basis to some basis of the free Lie algebra $\mathfrak{g}\subsetneq\TC$.
While our actual hands-on proof is based on the characterization of the annihilator of the free Lie algebra which we cite from \cite{bib:Reu1993},
we sketch a more abstract argument in Remark \ref{rem:milnorMoore} related to the Milnor-Moore theorem.

We continue by illustrating how our statement can be applied to some known shuffle generating sets, as well as to the image of $\rho$ (the dual of the Dynkin map, \Cref{sec:dynkinOperator}), which concludes one of our proofs that areas-of-areas generate the shuffle algebra.

\subsubsection{The area Tortkara algebra}
We study the smallest linear subspace $\areatortkara\subsetneq\TS$ closed under the area operation and containing the letters in Section \ref{sec:linear}. 
Thanks to the work by Dzhumadil'daev, Ismailov and Mashurov, we can use the categorial framework of \emph{Tortkara algebras}, 
where the objects are characterized as vector spaces with a bilinear antisymmetric operation which furthermore satisfy the \emph{Tortkara identity}, and the morphisms are homomorphisms of the bilinear operations as usual. 
Also thanks to \cite{bib:DIM2018}, we have a simple linear basis of $\areatortkara$ in terms of linear combinations of words, see Lemma \ref{lem:AequalsA}. 
We continue by a very important conjecture that the left bracketings of the area operation yield another basis of $\areatortkara$, which was shown for dimension two in both \cite{bib:DIM2018} and \cite{bib:Rei2018}. 
The rest of the section is dedicated to some interesting observations we made while, so far unsuccessfully, trying to prove that conjecture for any dimension.

\subsubsection{Applications and characterizations}
In Section \ref{sec:applications} we are connecting the purely algebraic considerations of this paper with the world of (deterministic and probabilistic) path spaces and iterated-integrals signatures on these path spaces,
as they have been the motivation for this work to begin with.
What we are generally looking at are characterizations of the area Tortkara algebra $\areatortkara$ in terms of special properties for given path spaces, like the space of piecewise linear paths (Subsection \ref{sec:computational}). 
The case of piecewise linear paths promises in fact to develop into the main application of the study of areas of areas.
They form the most common discretization of general continuous paths that one works with when actually computing iterated-integrals signature numerically, in machine learning for example.
It turns out that for piecewise linear paths, the computation of discrete areas is much simpler and better behaved than the computation of discrete integrals.

However, besides the discrete deterministic setting, the study of signatures has, since Lyons' theory of rough paths, been intimately related with stochastic analysis,
and we observe how areas of areas preserve the martingale property central in stochastic analysis, while general iterated Stratonovich integrals fail to do so.

\section{The Dynkin operator}
\label{sec:dynkinOperator}
\todonotes{JR: Friendly intro. In this we introduce, double tensor}

We recall the linear maps $r,D: \TC \to \TC$ from \cite[Section 1, p.20]{bib:Reu1993}. \index{r@$r$}\index{D@$D$}
The linear right-bracketing map or \textbf{Dynkin operator} $r$ is given on a word $w = \word{l}_1 \dots \word{l}_n$ as
\begin{align}
  \label{eq:r}
  r(\word{l}_1 \dots \word{l}_n) \coloneqq [\word{l}_1,[\word{l}_2,...[\word{l}_{n-1},\word{l}_n]]],
\end{align}
with $r(\emptyWord)=0$ and $r(\word{i})=\word{i}$ for any letter $\word{i}$.
The map $D$ (for \emph{derivation}) is given on a word $w$ as
\begin{align*}
  D(w) \coloneqq |w| w,
\end{align*}
where $|w|$ is the length of the word.
On $T_{\ge 1}((\R^d))$, $D$ is invertible with inverse $D^{-1}(w) = \frac{1}{|w|} w$.\index{Dm@$D^{-1}$}

\begin{remark}
  1. The seemingly simple Dynkin operator $r$ has found several applications.
  It for example characterizes Lie elements of $\TC$
  \cite[Theorem 3.1 (vi)]{bib:Reu1993}:
  $x \in \TC$ is a Lie series if and only if $\langle \emptyWord, x \rangle = 0$
  and $r(x) = D(x)$.
  See also \cite{bib:PR2002}, \cite{bib:Gar1990} and references therein.
  In the backward error analysis of numerical schemes it is used for example in \cite{bib:LMK2013}.

  2.
  \label{rem:logarithmicDerivative}
  Truncated at a fixed level, the grouplike elements / signatures of tree-reduced paths, form a Lie group.
  The Dynkin operator $r$ is a logarithmic derivative, i.e.~the derivative pulled-back to the tangent space at the identity, of an endomorphism of this group
  in the following sense (see \cite{bib:MP2013} for more on this).
  Let $\delta_\eps$ be the dilation operator, i.e.~the operation on tensors which multiplies each level $m$ by $\epsilon^m$,
  which corresponds to dilating or scaling a path by the factor $\epsilon$.
  For $g \in \grouplike$, let $g^\eps := \delta_\eps g$.
  Then
   \begin{align*}
    \big(\frac{d}{d\eps} g^\eps\big) \conc (g^\eps)^{-1}
    &=
    \big(\frac{d}{d\eps} g^\eps\big) \conc \alpha[g^\eps]
    =
    \big(\tfrac{1}{\eps} D[ g^\eps ]\big)\conc\alpha[g^\eps]
    =
    \tfrac{1}{\eps} \big(\mathsf{conc}\circ (D \otimes \alpha)\big) [g^\eps \otimes g^\eps] \\
    &=
    \tfrac{1}{\eps} \big(\mathsf{conc}\circ (D \otimes \alpha) \circ\deshuffle\big) [g^\eps] \qquad\text{as $g^\eps\in G$, \cite[Theorem 3.2]{bib:Reu1993}}\\
    &=
    \tfrac{1}{\eps} r[g^\eps],\qquad\text{see \cite[p32 or Lemma 1.5]{bib:Reu1993}}
   \end{align*}
   where $\alpha$ is the antipode on $\TC$ (which is the inverse in the Lie group, and corresponds to reversing a path),
   $\otimes$ is the external tensor product,
   $\mathsf{conc}$ is the linear map taking $a\otimes b$ to $a\conc b$,
  and $\deshuffle$ is the unshuffle coproduct, which \cite{bib:Reu1993} denotes with $\delta$.
\end{remark}

Let $\lift{r}$, $\lift{D}$, $\lift{D}^{-1}$ act on $\TSTC$
by letting $r,D,D^{-1}$ act on the right side of the tensor, i.e.
\index{rl@$\lift{r}$}
\index{Dl@$\lift{D}$}
\index{Dml@$\lift{D}^{-1}$}
\begin{align*}
  \lift{r}( a \otimes b ) &\coloneqq a \otimes {r}(b) \\
  \lift{D}( a \otimes b ) &\coloneqq a \otimes {D}(b) \\
  \lift{D}^{-1}( a \otimes b ) &\coloneqq a \otimes {D^{-1}}(b).
\end{align*}

Define%
\footnote{From now on, if we sum over a variable with no given index set,
we sum over all words in the alphabet $\word{1},..,\word{d}$, including the empty word $\emptyWord$.}
\index{S@$S$}
\index{R@$R$}
\index{rho@$\rho$}
\begin{align}
  S &\coloneqq \sum_w w \otimes w \notag \\
  R &\coloneqq \lift{r}(S) = \sum_w w \otimes r(w) = \sum_v \rho(v) \otimes v.  \label{eq:R}
\end{align}
Both are elements of $\TSTC$.
The last equality implicitly defines $\rho$. There also exists a recursive definition given by $\rho(\emptyWord)=0$, $\rho(\word{i})=\word{i}$ for any letter $\word{i}$ and
\begin{equation}\label{eq:rhorecursive}
 \rho(\word{i}w\word{j})=\word{i}\rho(w\word{j})-\word{j}\rho(\word{i}w)
\end{equation}
for any (empty or non-empty) word $w$ and letters $\word{i},\word{j}$,
see \cite[p.32]{bib:Reu1993}. Based on this recursion, we derive an expansion of $\rho$ via an action of elements of the symmetric group algebra in Proposition \ref{prop:rhogroupalgebra}.
We repeat that $r(\emptyWord) = \rho(\emptyWord) = 0$, so the sum in \eqref{eq:R} is actually only taken over words of strictly positive length.

We record the following for future use (\cite[Theorem 1.12]{bib:Reu1993}).
For any word $w$\footnote{If $|w| = 0$ then both sides are equal to zero.}
\begin{align}
  \label{eq:thm112}
  D w = \sum_{uv = w} \rho(u) \shuffle v = \sum_{uv = w, |u|\ge 1} \rho(u) \shuffle v.
\end{align}

Note that this yields yet another recursive definition of $\rho$:
\begin{equation*}
 \rho(\emptyWord)=0,
 \quad\rho(w)=|w|w-\sum_{\substack{uv=w\\[0.5ex]|u|,|v|\geq 1}}\rho(u)\shuffle v,
\end{equation*}
where $w$ is an arbitrary non-empty word.

\begin{proposition}
  \label{prop:rIsInvertible}
  The map $r: \grouplike \to \primitive$ is invertible. 
  To be specific, define for $x \in T_{\ge 1}((\R^d))$ the linear map 
  \begin{align*}
    A_x: \TC &\to \TC \\
    z &\mapsto D^{-1}( x z ).
  \end{align*}
  Then for $x \in \primitive$
  %\begin{align*}
  %  x \mapsto 1 + D^{-1}(x) + D^{-1}(x D^{-1}(x)) + D^{-1}(x D^{-1}(x D^{-1}(x))) + ..
  %\end{align*}
  %Then on $\mathfrak{g}$
  \begin{align}
    \label{eq:rInverse}
    r^{-1}[ x ]
    &= \sum_{\ell \ge 0} A_x^\ell \emptyWord \\
    &= \emptyWord + D^{-1}(x) + D^{-1}(x D^{-1}(x)) + D^{-1}(x D^{-1}(x D^{-1}(x))) + .. \notag
  \end{align}

  Equivalently, with $\lift{A}_R z := \lift{D}^{-1}[ R \shuffleConcat z ]$,\NEXTPAPER{Rosa: Is this a general formula for $\lift{r}^{-1}$?}
  \begin{align}
    \label{eq:equivalent}
    S
    &= \sum_{\ell\ge0} (\lift{A}_R)^\ell (\emptyWord\otimes \emptyWord) \\
    &= \emptyWord\otimes\emptyWord
        + \lift{D}^{-1}[ R ]
        + \lift{D}^{-1}[ R \shuffleConcat \lift{D}^{-1}[ R ] ]
        + \lift{D}^{-1}[ R \shuffleConcat \lift{D}^{-1}[ R \shuffleConcat \lift{D}^{-1}[ R ] ] ]
        + ..., \notag
  \end{align}
\end{proposition}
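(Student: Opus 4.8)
The plan is to reduce the invertibility of $r\colon\grouplike\to\primitive$ to solving, for a given primitive $x$, the linear fixed-point equation $g=\emptyWord+A_x(g)$, and then to recognize the two displayed series as the unique convergent solution of this equation in two different algebras. Throughout I would use the closed form $r=\mathsf{conc}\circ(D\otimes\alpha)\circ\deshuffle$ recorded in \Cref{rem:logarithmicDerivative} (citing \cite{bib:Reu1993}), the fact that $D$ is a coderivation for $\deshuffle$ (the unshuffle coproduct splits a word into subwords whose lengths add up), and that $\deshuffle$ is an algebra morphism for $\conc$.

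First I would establish that $r$ maps $\grouplike$ into $\primitive$ and is injective. Evaluating the closed form on a grouplike $g$, where $\deshuffle g=g\otimes g$ and $\alpha(g)=g^{-1}$, gives $r(g)=D(g)\conc g^{-1}$, i.e. $D(g)=r(g)\conc g$. A one-line computation $\deshuffle(r(g))=\deshuffle(Dg)\,\deshuffle(g^{-1})=(Dg\otimes g+g\otimes Dg)(g^{-1}\otimes g^{-1})=r(g)\otimes\emptyWord+\emptyWord\otimes r(g)$ then shows $r(g)$ is primitive. Injectivity is immediate: $r(g)=x$ forces $D(g)=x\conc g$, and since grouplike elements have $\langle\emptyWord,g\rangle=1$, comparing level $n$ gives the recursion $g_n=\tfrac1n\sum_{k\ge1}x_k\conc g_{n-k}$, which determines $g$ uniquely.

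For surjectivity and formula \eqref{eq:rInverse}, given $x\in\primitive$ I would set $g:=\sum_{\ell\ge0}A_x^\ell\emptyWord$. Because $x\in T_{\ge1}((\R^d))$, the operator $A_x=D^{-1}(x\,\cdot\,)$ raises the degree by at least one, so the series converges in the product topology and satisfies $g=\emptyWord+A_x(g)$, equivalently $D(g)=x\conc g$ with $\langle\emptyWord,g\rangle=1$. The crucial and, I expect, hardest step is to show this $g$ is grouplike. I would do this by a coproduct recursion: both $\Phi:=\deshuffle g$ and $\Psi:=g\otimes g$ satisfy the same linear equation $(D\otimes\id+\id\otimes D)\Xi=(\emptyWord\otimes x+x\otimes\emptyWord)\cdot\Xi$ in $\TC\mathbin{\hat\otimes}\TC$ (the first by the coderivation and homomorphism properties applied to $D(g)=x\conc g$, the second by direct substitution using primitivity of $x$); the right-hand operator strictly raises total bidegree, while the left acts as the scalar $m+n$ on bidegree $(m,n)$, and both $\Phi,\Psi$ have $(0,0)$-component $\emptyWord\otimes\emptyWord$. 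Hence the bigraded recursion forces $\Phi=\Psi$, so $g\in\grouplike$, and then $r(g)=D(g)\conc g^{-1}=x$, proving surjectivity and \eqref{eq:rInverse}.

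Finally, for the equivalent statement \eqref{eq:equivalent} I would run the identical argument one level up, inside $(\TSTCRing,\shuffleConcat)$, with $S$ playing the role of the grouplike solution and $R$ that of the primitive datum. The only new input is the lifted analogue of $D(g)=x\conc g$, namely $\lift{D}(S)=R\shuffleConcat S$; this follows directly from \eqref{eq:thm112}, since collecting the terms of $R\shuffleConcat S=\sum_{v,u}(\rho(v)\shuffle u)\otimes(v\conc u)$ by their right tensor factor $w=vu$ yields $\sum_{vu=w}\rho(v)\shuffle u=Dw=|w|w$, which is exactly the right factor of $\lift{D}(S)$. Rewriting this as $S=(\emptyWord\otimes\emptyWord)+\lift{A}_R(S)$ and iterating—$\lift{A}_R$ raises the right-hand degree by at least one, so the series converges—gives $S=\sum_{\ell\ge0}(\lift{A}_R)^\ell(\emptyWord\otimes\emptyWord)$, as claimed.
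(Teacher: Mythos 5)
Your proof is correct, but the two halves where real work happens are done by a genuinely different route than the paper's. For the left-inverse/injectivity part you and the paper do the same thing: turn $r(g)=x$ into the fixed-point equation $g=\emptyWord+A_xg$ and invoke uniqueness of its solution degree by degree. For the right-inverse/surjectivity part you diverge: the paper shows $r[A_x^n\emptyWord]=0$ for $n\ge 2$ by induction using Baker's identity (\Cref{lem:PQ}), so that $r$ applied to the series collapses to $r[D^{-1}x]=x$; you instead prove that $g:=\sum_\ell A_x^\ell\emptyWord$ is grouplike, by checking that $\deshuffle g$ and $g\otimes g$ solve the same bigraded fixed-point equation with the same $(0,0)$-component, and then conclude $r(g)=D(g)\conc g^{-1}=x$. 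Your version is slightly longer but buys something the paper's computation leaves implicit: it exhibits $r^{-1}[x]$ as an element of $\grouplike$, which is what the literal claim ``$r\colon\grouplike\to\primitive$ is invertible'' requires (the paper's right-inverse computation only shows $r$ of the series equals $x$, not that the series lies in $\grouplike$). The paper's version is shorter and reuses \Cref{lem:PQ}, which is needed again in Section 3. Finally, for \eqref{eq:equivalent} the paper deduces equivalence with \eqref{eq:rInverse} by testing against grouplike elements via $\eval_g$ and their projective density, whereas you prove it directly from the identity $\lift{D}S=R\shuffleConcat S$, which is a clean repackaging of \eqref{eq:thm112}; both are valid, and your observation makes \eqref{eq:equivalent} essentially a restatement of \cite[Theorem 1.12]{bib:Reu1993} at the level of $\TSTC$.
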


\begin{remark}
  Compare \cite[Theorem 4.1]{bib:EGP2007} for a statement in a more general setting.
\end{remark}

\begin{proof}[Proof of \Cref{prop:rIsInvertible}]
  The claimed equivalence is shown as follows.
  For $t \in \TSTC$, with zero coefficient for $\emptyWord\otimes\emptyWord$,
  \begin{align*}
    \eval_g(\lift{D}^{-1}( t ))
    =
    D^{-1}( \eval_g(t) ).
  \end{align*}
  Hence
  \begin{align*}
    g &= \emptyWord + D^{-1}( r(g) ) + D^{-1}( r(g) D^{-1}( r(g) ) ) + .. \quad \forall g \in G \\
      &\Leftrightarrow \\
    \eval_g(S)
    &=
    \eval_g(\emptyWord) + D^{-1}( \eval_g(R) ) + D^{-1}( \eval_g(R) D^{-1}( \eval_g(R) ) ) + .. \quad \forall g \in G \\
    &=
    \eval_g(\emptyWord) + \eval_g( \lift{D}^{-1}( R ) ) + \eval_g( \lift{D}^{-1}( R \shuffleConcat \lift{D}^{-1}( R ) ) ) + .. \\
          &\Leftrightarrow \\
    S
    &=
    \emptyWord + \lift{D}^{-1}( R ) + \lift{D}^{-1}( R \shuffleConcat \lift{D}^{-1}( R ) ) + ..,
  \end{align*}
  where we used the homomorphism property of $\eval_g$ and the fact that grouplike elements linearly span $\TC$ projectively
  (i.e. truncated, at level $n$, grouplike elements linearly span $T_{\le n}((\R^d))$).

    We now show \eqref{eq:rInverse}.
    Write $x := r[g] = D[g] \conc g^{-1}$ (compare Remark \ref{rem:logarithmicDerivative}.2).
    Then
    \begin{align*}
      g = \emptyWord + D^{-1}[ x \conc g ],
    \end{align*}
    i.e.
    \begin{align}\label{eq:gAxfixpoint}
      g = \emptyWord + A_x g.
    \end{align}
    Now since $x$ does not contain a component in the empty word, this actually amounts to a recursive formula,
    \begin{equation*}
     \proj_0 g=\emptyWord,\quad \proj_n g=\sum_{m=1}^n A_{\proj_m x}(\proj_{n-m}g),\quad n\geq 1
    \end{equation*}
    and thus Equation \eqref{eq:gAxfixpoint} has a unique solution. Hence
    \begin{align*}
      g = \sum_{\ell\ge 0} A^\ell_x \emptyWord,
    \end{align*}
    since the series converges due to being a finite sum for each homogeneous component and obviously provides a solution for Equation \eqref{eq:gAxfixpoint}.
    
    This shows that \eqref{eq:rInverse} gives a left-inverse.

    It is also a right inverse. Indeed,
    first note that
    for $x \in \primitive$
    and $n \ge 2$ we have $r[ A_x^n \emptyWord ] = 0$.
    For $n=2$, 
    using Lemma \ref{lem:PQ}, this follows from
    \begin{align*}
      r[ A_x^2 \emptyWord ]
      &=
      r[ D^{-1}( x D^{-1}(x) ) ]
      =
      D^{-1}( r[ x D^{-1}(x) ] )
      =
      D^{-1}( r[ r[D^{-1}x] D^{-1}(x) ] ) \\
      &=
      D^{-1}( [ r[D^{-1}x], r[D^{-1}(x)] ] )
      = 0.
    \end{align*}
    Assume it is true for $A^{n-1}_x$, then
    \begin{align*}
      r[ A_x^n \emptyWord ]
      &=
      r[ D^{-1}( x A_x^{n-1} \emptyWord ) ]
      =
      D^{-1}( r[ x A_x^{n-1} \emptyWord ] )
      =
      D^{-1}( r[ r[D^{-1}x] A_x^{n-1} \emptyWord ] ) \\
      &=
      D^{-1}( [ r[D^{-1}x], r[ A_x^{n-1} \emptyWord ] ] )
      = 0.
    \end{align*}
    Hence
    \begin{align*}
      r[ e + D^{-1}(x) + D^{-1}(xD^{-1}(x)) + .. ]
      = x,
    \end{align*}
    so that the Lemma indeed provides a right inverse.
\end{proof}

%\begin{example}
%  %\begin{align*}
%  %  \log S
%  %  &=
%  %  \log r^{-1}[R] \\
%  %  &=
%  %  \sum_{n \ge 1} \frac{(-1)^{n+1}}{n} \left( \sum_{\ell \ge 1} A^\ell 1 \right)^n.
%  %\end{align*}
%
%  %Then
%  %\begin{align*}
%  %  \log S|_1 &= A1|_1 \\
%  %  \log S|_2 &= A1|_2 + A^2 1|_2 - \frac{1}{2} (A1)^2|_2 \\
%  %            &= A1|_2 \\
%  %  \log S|_3 &= A1|_3 + A^2 1|_3 + A^3 1|_3 - \frac{1}{2} (A1 + A^2 1)^2|_3
%  %  + \frac{1}{3} \left( A1 \right)^3|_3.
%  %\end{align*}
%
%  Let us specialize to $d=2$, then
%  \todonotes{ todo }
%\end{example}

%\newcommand

\begin{definition}
  \label{def:preLie}
  Define the following product on $\TSTC$,
  \index{<l@$\leftPrelie$}
  \begin{align*}
    (p \otimes q) \leftPrelie (p' \otimes q') := (p \hs p') \otimes [q,q'],
  \end{align*}
  where $[.,.]$ is the Lie bracket in $\TC$ and $\hs$ is the half-shuffle in $\TS$.
\end{definition}
\begin{remark}
  \label{rem:preLie}
  This product is \emph{pre-Lie}, 
  as the tensor product of a Zinbiel algebra and a Lie algebra is always a pre-Lie algebra (this is shown in Rocha's thesis as \cite[Proposition~4.13 and Corollary~4.14]{bib:Roc2003b}, 
  though there the terminology `chronological algebra' is used to mean what we call pre-Lie algebra), although we will not use this fact.
  It comes from the dendriform structure
  \begin{align*}
    (p \otimes q) \succeq (p' \otimes q') &:= (p \hs p') \otimes q q' \\
    (p \otimes q) \preceq (p' \otimes q') &:= (p' \hs p) \otimes q q',
  \end{align*}
  i.e. $x \leftPrelie y = x \succeq y - y \preceq x$.
  Indeed, the operations $\succeq$ and $\preceq$ together satisfy the three dendriform identities (e.g. \cite[Equations (8)-(10)]{bib:KM2009}), which is a straightforward consequence of the Zinbiel identity of the halfshuffle and the associativity of the concatenation,
  \begin{align*}
   (A\preceq B)\preceq C&=(p_3\hs (p_2\hs p_1))\otimes q_1  q_2  q_3=((p_3\hs p_2)\hs p_1)\otimes q_1  q_2  q_3+((p_2\hs p_3)\hs p_1)\otimes q_1  q_2  q_3\\
   &=A\preceq (B\preceq C)+A\preceq (C\preceq B),\\
   A\succeq (B\succeq C)&=(p_1\hs (p_2\hs p_3))\otimes q_1  q_2  q_3=((p_1\hs p_2)\hs p_3)\otimes q_1  q_2  q_3+((p_2\hs p_1)\hs p_3)\otimes q_1  q_2  q_3\\
   &=(A\succeq B)\succeq C+(B\succeq A)\succeq C,\\
   (A\succeq B)\preceq C&=(p_3\hs(p_1\hs p_2))\otimes q_1 q_2 q_3=((p_3\hs p_1+p_1\hs p_3)\hs p_2)\otimes q_1 q_2 q_3\\
   &=(p_1\hs(p_3\hs p_2))\otimes q_1 q_2 q_3=A\succeq(B\preceq C),
  \end{align*}
  for $A=p_1\otimes q_1$, $B=p_2\otimes q_2$, $C=p_2\otimes q_3$.

  For more background on pre-Lie products and this relation to
  dendriform algebras see for example \cite{bib:KM2009} and references therein.
\end{remark}

%{\color{green}
%  Statement without using $R$:
%
%  \begin{align*}
%    \rho(v)
%    =
%    \sum_{\tau,w} c_{\tau,w} \area_\tau(w) \Big\langle v, \bracket_\tau(w) \Big\rangle,
%  \end{align*}
%  with $c_{\tau,w} = \dots$.
%}

%\todonotes{ Is there a statement withou using $R$? }

The object $R$ satisfies a quadratic fixed-point equation.
\begin{lemma}
  \label{lem:R}
  \begin{align}
    \label{eq:lemR}
    %R \bullet R
    %= \sum_{|w|\ge 2} w \otimes r\Big( (D - \id) w \Big)
    %\quad \Big(= (\lift{D} - \id) R \Big).
    (\lift{D} - \id) R = R \leftPrelie R.
  \end{align}
\end{lemma}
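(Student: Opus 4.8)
The plan is to expand both sides of \eqref{eq:lemR} in the distinguished ``word basis'' of the right-hand tensor factor, reduce the identity to a single word-indexed family of identities in $\TS$, and then settle that family by induction.

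First I would use the two expressions $R = \sum_w w\otimes r(w) = \sum_v \rho(v)\otimes v$ recorded in \eqref{eq:R}. Since $\lift{D}$ acts only on the right factor and $r(w)$ is homogeneous of degree $|w|$, the left-hand side is
\begin{align*}
  (\lift{D}-\id)R = \sum_v (|v|-1)\,\rho(v)\otimes v,
\end{align*}
so that the component whose right factor is a fixed word $m$ equals $(|m|-1)\rho(m)\in\TS$. For the right-hand side I would again use $R=\sum_v\rho(v)\otimes v$, so that by \Cref{def:preLie}
\begin{align*}
  R\leftPrelie R = \sum_{v,v'}\big(\rho(v)\hs\rho(v')\big)\otimes[v,v'].
\end{align*}
Writing $[v,v']=vv'-v'v$ and collecting, for a fixed word $m$, the pairs with $vv'=m$ (sign $+$) and those with $v'v=m$ (sign $-$, after relabelling), the component along $\otimes m$ becomes $\sum_{m=ab}\big(\rho(a)\hs\rho(b)-\rho(b)\hs\rho(a)\big)=\sum_{m=ab}\area(\rho(a),\rho(b))$, the sum running over all deconcatenations $m=ab$ (empty factors drop out because $\rho(\emptyWord)=0$). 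Since the words form a basis of the right factor and, for fixed $m$, both components are finite sums, the lemma is equivalent to the family of $\TS$-identities
\begin{align*}
  (|m|-1)\,\rho(m) = \sum_{m=ab}\area(\rho(a),\rho(b)), \qquad \text{one for each word } m.
\end{align*}

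I would then prove this family by induction on $|m|$. The cases $|m|\le 1$ are trivial (both sides vanish) and $|m|=2$ is a direct check. For the step I would write $m=\word{i}\,u\,\word{j}$ and substitute the recursion \eqref{eq:rhorecursive}, $\rho(\word{i}u\word{j})=\word{i}\rho(u\word{j})-\word{j}\rho(\word{i}u)$, into the left-hand side, while reorganizing the deconcatenation sum on the right using the defining append-the-last-letter rule of the half-shuffle, $p\hs(q\word{j})=(p\shuffle q)\word{j}$, so as to expose the same prefix and suffix words $\rho(\word{i}u)$ and $\rho(u\word{j})$ that occur in the recursion.

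The main obstacle is precisely this inductive bookkeeping: the recursion \eqref{eq:rhorecursive} peels a letter off each end of $m$ simultaneously, whereas the sum on the right is organized by prefixes and suffixes, so matching them requires splitting the sum according to which end a split point sits near and tracking the half-shuffle's last-letter constraint. A more structural alternative is to apply $\eval_g$ for grouplike $g$ and use that grouplike elements span $\TC$ projectively (exactly as in the proof of \Cref{prop:rIsInvertible}): the left side becomes $(D-\id)r(g)$, which is readily computed from $r(g)=D(g)\conc g^{-1}$ together with the fact that $D$ is the grading derivation for $\conc$, while the right side becomes $\sum_{u,w}\langle g,\area(u,w)\rangle\,r(u)\conc r(w)$ after antisymmetrizing. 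In this guise the difficulty migrates to evaluating the half-shuffle pairing $\langle g, u\hs w\rangle$ against a grouplike element, the same combinatorial content as before, now to be combined with the bracketing identity $r(P\conc Q)=[P,r(Q)]$ for primitive $P$ (\Cref{lem:PQ}, as used in the proof of \Cref{prop:rIsInvertible}).
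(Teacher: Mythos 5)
Your reduction of the lemma to the word-indexed family
\begin{equation*}
  (|m|-1)\,\rho(m) \;=\; \sum_{ab=m}\area\big(\rho(a),\rho(b)\big)
\end{equation*}
is correct (the bookkeeping of the signs in $[v,v']=vv'-v'v$ checks out, and the identity is true), but it is not a reduction in difficulty: applying $\coeval^{m}$ to both sides of \eqref{eq:lemR}, with $R$ expanded as $\sum_v\rho(v)\otimes v$ in \emph{both} factors of the pre-Lie product, gives back exactly this family, so it carries the full content of the lemma. The proof you then offer for it is a sketch of an induction via \eqref{eq:rhorecursive} that you explicitly do not carry out, and whose ``inductive bookkeeping'' you yourself identify as the main obstacle; the alternative route via $\eval_g$ is conceded to have the same unresolved combinatorial core. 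That unexecuted step is a genuine gap. The source of the difficulty is that your version of the identity has $\rho$ in \emph{both} slots of the half-shuffle, and half-shuffles of $\rho$-images against $\rho$-images do not telescope along the recursion \eqref{eq:rhorecursive} in any straightforward way.

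The paper avoids this entirely by using the two expansions of $R$ \emph{asymmetrically}: it writes the left factor of $R\leftPrelie R$ as $\sum_v\rho(v)\otimes v$ and the right factor as $\sum_w w\otimes r(w)$, and uses $\ad_P=\Ad_P$ for Lie elements to get $R\leftPrelie R=\sum_{v,w}(\rho(v)\hs w)\otimes r(vw)$. The only half-shuffle identity then needed is $\sum_{uv=x,\,|u|,|v|\ge1}\rho(u)\hs v=(D-\id)x$, which has a \emph{plain word} in the second slot and is a one-line consequence of \eqref{eq:thm112} (concatenate a letter on the right and use $(a\shuffle b)\word{j}=a\hs(b\word{j})$). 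If you want to salvage your route, the cleanest fix is to adopt this asymmetric expansion rather than trying to prove your symmetric identity by induction; as it stands, your argument establishes an equivalent reformulation of the lemma but not the lemma itself.
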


\begin{proof}
  %By \cite[Theorem 1.12]{bib:Reu1993}
  %\begin{align*}
  %  \sum_{uv = w} \rho(u) \shuffle v = D w.
  %\end{align*}
  %Note that $\rho(\emptyWord) = 0$ hence
  %\begin{align*}
  %  \sum_{uv = w, |u|\ge 1} \rho(u) \shuffle v = D w.
  %\end{align*}
  Let $|w| \ge 1$.
  Starting from \eqref{eq:thm112} and concatenating a letter $\word{a}$ from the right on both sides,
  we get
  \begin{align*}
    \sum_{uv = w, |u|\ge 1} (\rho(u) \shuffle v) \word{a} = (D w) \word{a} = (D-\id)( w\word{a} ).
  \end{align*}
  Hence
  \begin{align*}
    \sum_{uv = w, |u|\ge 1} \rho(u) \hs v\word{a} = (D-\id)( w\word{a} ),
  \end{align*}
  which means, for $|\bar w| \ge 2$,
  \begin{align}
    \label{eq:halfShuffle}
    \sum_{uv = \bar w, |u|\ge 1,|v|\ge 1} \rho(u) \hs v = (D-\id) \bar w.
  \end{align}

  Recall \index{ad@$\ad_v$ and $\Ad_v$}
  \begin{align*}
    \ad_v w &= [v, w] \\
    \Ad_v w &= [\word{k}_1, [\word{k}_2, .., [\word{k}_n, w]..]],
  \end{align*}
  where $v=\word{k}_1\cdots\word{k}_n$.  By \cite[Theorem 1.4]{bib:Reu1993}, for a Lie polynomial $P$ one has
  \begin{align}
    \label{eq:ad}
    \ad_P = \Ad_P.
  \end{align}
  For a word $w$ define the linear map $I_w$ as \index{Iw@$I_w$}
  \begin{align*}
    I_w x \coloneqq w \hs x,
  \end{align*}
  and extend linearly to the whole tensor algebra.
  %So both
  %$I_\bullet$ (resp. $\ad_\bullet$)
  %are linear maps on $\homogeneous$ into spaces of endomorphisms. \todonotes{ what? }
  %
  The map
  \begin{align*}
    I_\bullet \otimes \ad_\bullet: \TSTC \to \operatorname{Hom}_\R\big( \TSTC, \TSTC\big),
  \end{align*}
  is defined as
  \begin{align*}
    (I_x\otimes\ad_y) a \otimes b
    =
    (I_x a) \otimes (\ad_y b).
  \end{align*}
  Now
  \begin{align*}
    (I_\bullet \otimes \ad_\bullet) R
    &=
    (I_\bullet \otimes \ad_\bullet) \sum w \otimes r(w)
    =
    (I_\bullet \otimes \Ad_\bullet) \sum w \otimes r(w)
    =
    (I_\bullet \otimes \Ad_\bullet) \sum \rho(v) \otimes v \\
    &=
    \sum_{|v|\ge 1} I_{\rho(v)} \otimes \Ad_v,
  \end{align*}
  where we used \eqref{eq:ad} and then \eqref{eq:R}.
  Then
  \begin{align*}
    R \leftPrelie R
    &=
    ((I_\bullet\otimes\ad_\bullet ) R) R
    =
    \left(\sum_{|v|\ge 1} I_{\rho(v)} \otimes \Ad_v\right) \sum_{|w|\ge 1} w \otimes r(w)
    =
    \sum_{|v|,|w|\ge 1} (\rho(v) \hs w) \otimes r(vw) \\
    &=
    \sum_{|x|\ge2} \sum_{vw = x, |v|,|w|\ge 1} (\rho(v) \hs w) \otimes r(vw)
    =
    \sum_{|x|\ge2} (|x| - 1) x \otimes r(x) \\
    &=
    \sum_{|x|\ge2} x \otimes \big( (|x| - 1) r(x) \big)
    =
    \sum_{|x|\ge2} x \otimes r[ (D-\id) x ]
    =
    (\lift D - \id) R.\qedhere
  \end{align*}
\end{proof}

\begin{remark}
  \label{rem:ODEInterpretation}
  We sketch the connection to the ODE approach of \cite{bib:Roc2003}.  
  Let $S^\eps_t := \delta_\eps S(X)_t$ be the signature at time $t$, dilated by a factor $\eps > 0$.
  Define
  \begin{align*}
    Z^\eps_t := \frac{d}{d\eps} S^\eps_t \conc (S^\eps_t)^{-1},
  \end{align*}
  which, as we have seen in Remark \ref{rem:logarithmicDerivative}.2, is equal to $\eps^{-1} r[ S^\eps_t ]$.
  One can show (see \cite[(1.8)]{bib:AGS1989}, in the language of 'chronological algebras'),
  that $Z^\eps_t$ satisfies
  \begin{align}\label{eq:partial_Z}
    \partial_\eps Z^\eps_t = \int_0^t \left[ Z^\eps_r, \dot Z^\eps_r \right] dr,
  \end{align}
  where $\left[.,.\right]$ is the Lie bracket in $\primitive$.
  We may give an alternative proof of \eqref{eq:partial_Z} based on the quadratic fixed-point equation \eqref{eq:lemR}. %Now
  For the left hand side,
  \begin{align*}
    \partial_\eps Z^\eps_t
    &= \partial_\eps \left( \eps^{-1} r[ S^\eps_t ] \right)
    = - \eps^{-2} r[ S^\eps_t ] + \eps^{-1} r[ \partial_\eps S^\eps_t ]
    = - \eps^{-2} r[ S^\eps_t ] + \eps^{-2} r[ D S^\eps_t ]
    = \eps^{-2} r[ (D-\id) S^\eps_t ]\\
    &=\eps^{-2}\sum_{w}\langle S_t^{\eps},w\rangle\,r[(D-\id)w]
    =\eps^{-2}\eval_{S_t^\eps}[(\lift{D}-\id)R],
  \end{align*}
  %which relates to the left-hand side of \eqref{eq:lemR}.
  %
  Aiming at the right hand side, we first note that in general for $p\otimes q, p'\otimes q' \in \TSTC$, we have
  \begin{align*}
    \int_0^t \left[ \Big\langle S_s, p \Big\rangle q, \Big\langle \dot S_s, p' \Big\rangle q' \right] ds
    &=
    \int_0^t \Big\langle S_s, p \Big\rangle d \Big\langle S_s, p' \Big\rangle \left[ q, q' \right]
    =
    \Big\langle S_t, p \hs p' \Big\rangle \left[ q, q' \right] \\
    &=
    \eval_{S_t}\left[ (p\otimes q) \leftPrelie (p'\otimes q') \right].
  \end{align*}
  This implies
  \begin{align*}
   \int_0^t \left[ Z^\eps_r, \dot Z^\eps_r \right] dr&=\eps^{-2}\left[r[S_t^\eps],r[\dot S_t^\eps]\right]
   =\eps^{-2}\sum_{w}\sum_{w'}\int_0^t \left[ \Big\langle S_s^\eps, w \Big\rangle r[w], \Big\langle \dot S_s^\eps, w' \Big\rangle r[w'] \right] ds\\
   &=\eps^{-2}\sum_{w}\sum_{w'}\eval_{S_t^\eps}[(w\otimes r[w])\leftPrelie (w'\otimes r[w'])]
   =\eps^{-2}\eval_{S_t^\eps}[R\leftPrelie R].
  \end{align*}

  %This relates to the right-hand side of \eqref{eq:lemR}.
  %\todonotes{Rosa: isn't that actually a proof of \eqref{eq:partial_Z}?}
  Putting things together, we thus have that \eqref{eq:partial_Z} is equivalent to
  \begin{equation*}
   \eval_{S_t^\eps}[(\lift{D}-\id)R]=\eval_{S_t^\eps}[R\leftPrelie R].
  \end{equation*}
  which is of course an immediate consequence of \eqref{eq:lemR}.
\end{remark}

%We can rewrite the previous statement by symmetrizing and thereby making the $\area$-operator appear.
By symmetrizing the pre-Lie product in the quadratic fixed point equation \eqref{eq:lemR}, we make the $\area$-operator appear.
  Define \index{<S@$\leftPrelieSym$}
  \begin{align*}
    (p \otimes q) \leftPrelieSym (p' \otimes q')
    &:=
    (p \otimes q) \leftPrelie (p' \otimes q') + (p' \otimes q') \leftPrelie (p \otimes q) \\
    &\ =
    \area(p,p') \otimes [q,q'].
  \end{align*}
This product was introduced exactly like this already by Rocha in \cite[Lemma 6.5]{bib:Roc2003b} and is the tensor algebra analogue of the vector field product also introduced by Rocha in \cite[Equation (6.13) and Proposition 6.3]{bib:Roc2003b}.

\begin{corollary}
\label{cor:Rrecursion}
  We have
  \begin{align*}
    (\lift{D}-\id) R = \frac{1}{2} R \leftPrelieSym R.  
  \end{align*}

  Let $R_n := \proj_n R = \sum_{|w|=n} w \otimes r[w]$ be the $n$-th level of $R$. \index{Rn@$R_n$}
  Then for $n \ge 2$ this spells out as
  \begin{align*}
    (n-1) R_n
    &=
    \frac{1}{2} \sum_{\ell=1}^{n} R_\ell \leftPrelieSym R_{n-\ell}
    =
    \begin{cases}
      \sum_{\ell=1}^{(n-1)/2} R_\ell \leftPrelieSym R_{n-\ell} \ \ \qquad \qquad \qquad \qquad\ n \text{ odd } \\
      \sum_{\ell=1}^{n/2} R_\ell \leftPrelieSym R_{n-\ell} + \frac{1}{2} R_{n/2} \leftPrelieSym R_{n/2} \qquad n \text{ even }
    \end{cases}
  \end{align*}
 with $R_n\in\quadsym:=\gen{\word{i}\otimes\word{i},\,\word{i}=\word{1}\ldots\word{d}}{\leftPrelieSym}$.
  %Then for $n \ge 2$
  %\begin{align*}
  %  n R_n
  %  =
  %  \sum_{\ell=1}^{[ n/2 ]} R_\ell \leftPrelieSym R_{n-\ell}
  %  +
  %  \mathbf{1}( n \text{ is even} ) \frac{1}{2} R_{n/2} \leftPrelieSym R_{n/2}.
  %\end{align*}
  %%where $\lfloor s \rfloor$ is the largest integer smaller-equal to $s$.
  %where $[ s ]$ is the largest integer \emph{strictly} smaller than $s$.
\end{corollary}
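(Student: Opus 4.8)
The plan is to read off the entire statement from \Cref{lem:R}, which already supplies the pre-Lie fixed-point equation $(\lift D - \id) R = R \leftPrelie R$. The opening identity is then immediate: by the very definition $A \leftPrelieSym B = A \leftPrelie B + B \leftPrelie A$, setting $A = B = R$ gives $R \leftPrelieSym R = 2\,(R \leftPrelie R)$, so $(\lift D - \id) R = \tfrac12 R \leftPrelieSym R$. I would note only that $\leftPrelie$ and $\leftPrelieSym$ are bilinear and that on the graded completion $\TSTC$ every homogeneous component is a finite computation, so this manipulation of the infinite series $R$ is legitimate level by level.

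Next I would pass to level $n$ by applying $\proj_n$. Two grading facts drive this. First, $\lift D$ acts on $R_n = \sum_{|w|=n} w \otimes r(w)$ as multiplication by $n$, because $r(w)$ is homogeneous of length $|w| = n$; hence $(\lift D - \id) R_n = (n-1) R_n$. Second, $\leftPrelieSym$ is graded for $|p \otimes q| := |q|$, since $(p \otimes q) \leftPrelieSym (p' \otimes q') = \area(p,p') \otimes [q,q']$ and the bracket $[q,q']$ is a combination of words of length $|q| + |q'|$. Therefore $\proj_n(R \leftPrelieSym R) = \sum_{\ell + m = n} R_\ell \leftPrelieSym R_m$, and since $R_0 = 0$ (as $r(\emptyWord)=0$) the sum effectively runs over $1 \le \ell \le n-1$. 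This yields the middle expression $(n-1)R_n = \tfrac12 \sum_{\ell=1}^{n} R_\ell \leftPrelieSym R_{n-\ell}$, the $\ell=n$ summand vanishing.

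For the case distinction I would first record that $\leftPrelieSym$ is commutative: $\area(p',p) \otimes [q',q] = (-\area(p,p')) \otimes (-[q,q']) = \area(p,p') \otimes [q,q']$, so $R_\ell \leftPrelieSym R_{n-\ell} = R_{n-\ell} \leftPrelieSym R_\ell$. Folding the sum by pairing $\ell$ with $n-\ell$ then gives the two cases: for $n$ odd every summand pairs off and the factor $\tfrac12$ exactly cancels the doubling, while for $n$ even the self-paired middle term $\ell = n/2$ is left over and survives carrying the factor $\tfrac12$. I would write out the folded ranges explicitly, being careful with the endpoint of the summation and with the single $R_{n/2} \leftPrelieSym R_{n/2}$ contribution. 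Finally, the membership $R_n \in \quadsym$ follows by induction on $n$: for $n=1$ we have $R_1 = \sum_i \word i \otimes r(\word i) = \sum_i \word i \otimes \word i \in \quadsym$ by definition, and for $n \ge 2$ the recursion writes $(n-1) R_n$ as a $\leftPrelieSym$-combination of the $R_\ell$ with $\ell < n$, which lie in $\quadsym$ by hypothesis; since $\quadsym$ is closed under $\leftPrelieSym$ and under scalars and $n-1$ is invertible in characteristic zero, dividing gives $R_n \in \quadsym$.

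Since everything is essentially a corollary of \Cref{lem:R}, there is no deep obstacle here; the only points demanding care are the bookkeeping in the folded sum — in particular the correct endpoint of the summation and the treatment of the unpaired middle term $R_{n/2} \leftPrelieSym R_{n/2}$ when $n$ is even — and the verification that both $\lift D$ and $\leftPrelieSym$ respect the chosen grading, which is what makes the level-$n$ projection well defined.
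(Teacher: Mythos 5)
Your proposal is correct and takes essentially the same route as the paper, whose entire proof of this corollary is the single line that it follows immediately from Lemma \ref{lem:R}; your symmetrization $R\leftPrelieSym R=2\,R\leftPrelie R$, the level-$n$ projection using that $\lift{D}$ and $\leftPrelieSym$ respect the grading, the folding of the sum, and the induction giving $R_n\in\quadsym$ are exactly the details the paper elides. One small remark: your folding is right, and carried out explicitly it shows the upper limit in the even case should be $n/2-1$ rather than the $n/2$ printed in the statement, since otherwise the term $R_{n/2}\leftPrelieSym R_{n/2}$ is counted with total coefficient $3/2$ instead of $1/2$.
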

\begin{proof}
  This follows immediately from Lemma \ref{lem:R}.
\end{proof}

\begin{remark}
  Note that \cite[Proposition 6.8]{bib:Roc2003b}
  has a slightly more complicated recursion.
  This stems from the facts that
  the $Z^n$ there relates to our $\frac{R_n}{n!}$ here.
\end{remark}

%\begin{example}
%  \label{ex:R}
%  Let $R_n := \proj_n R$ be the $n$-th level of $R$.
%  Then Lemma \ref{lem:R} gives
%  \begin{align*}
%    R_2   &= R_1 \bullet R_1 \\
%    2 R_3 &= R_1 \bullet R_2 + R_2 \bullet R_1  \\
%    3 R_4 &= R_2 \bullet R_2 + R_1 \bullet R_3 + R_3 \bullet R_1
%  \end{align*}
%\end{example}

% B^1_u B^2_{u,v} - B^2_u B^1_{u,v}

% B^1_{s,u} B^2_{u,t} - B^2_{s,u} B^1_{u,t}
% B^1_{s,u} ( B^2_{u,t} - B^2_{u,t} ) + B^1_{s,u} B^2_{s,t} - B^2_{s,u} B^1_{u,t}

\begin{example}
  \label{ex:R}
  Let $R_n := \proj_n R$ be the $n$-th level of $R$.
  Then Lemma \ref{lem:R} and Corollary \ref{cor:Rrecursion} give
  \begin{align*}
    R_2   &= R_1 \leftPrelie R_1
          = \frac{1}{2} R_1 \leftPrelieSym R_1 \\
    2 R_3 &= R_1 \leftPrelie R_2 + R_2 \leftPrelie R_1
          = \frac{1}{2} R_1 \leftPrelieSym R_2 + \frac{1}{2} R_2 \leftPrelie R_1
          = R_1 \leftPrelieSym R_2 \\
    3 R_4 &= R_1 \leftPrelie R_3 + R_2 \leftPrelie R_2 + R_3 \leftPrelie R_1
          = \frac{1}{2} R_1 \leftPrelieSym R_3 + \frac{1}{2} R_2 \leftPrelieSym R_2 + \frac{1}{2} R_3 \leftPrelieSym R_1 \\
          &= R_1 \leftPrelieSym R_3 + \frac{1}{2} R_2 \leftPrelieSym R_2
  \end{align*}

  \newcommand{\w}[1]{\word{#1}}
  For $\ds=2$ this becomes
  \begin{align*}
    R_1 &= \w{1} \otimes \w{1} + \w{2} \otimes \w{2} \\
    R_2 &=
    \frac{1}{2} \left( \area(\w{1},\w{2}) \otimes [\w{1},\w{2}] + \area(\w{2},\w{1}) \otimes [\w{2},\w{1}] \right) \\
    R_3 &= \frac{1}{4}
       \Big(
         \area(\w{1},\area(\w{1},\w{2})) \otimes [\w{1},[\w{1},\w{2}]] + \area(\w{1},\area(\w{2},\w{1})) \otimes [\w{1},[\w{2},\w{1}]] \\
         &\qquad
         + \area(\w{2},\area(\w{1},\w{2})) \otimes [\w{2},[\w{1},\w{2}]] + \area(\w{2},\area(\w{2},\w{1})) \otimes [\w{2},[\w{2},\w{1}]]
          \Big) \\
   R_4 &= \frac{1}{12} \Big(
            %\{\w{1},\{\w{1},\{\w{1},\w{2}\}\}\} \otimes [\w{1},[\w{1},[\w{1},\w{2}]]]
            \area(\w{1},\area(\w{1},\area(\w{1},\w{2}))) \otimes [\w{1},[\w{1},[\w{1},\w{2}]]]
            +
            \area(\w{1},\area(\w{1},\area(\w{2},\w{1}))) \otimes[\w{1},[\w{1},[\w{2},\w{1}]]]\\
          &\qquad
          +  \area( \w{1},\area( \w{2}, \area( \w{1},\w{2})))\otimes [\w{1},[\w{2},[\w{1},\w{2}]]]
          + \area(\w{1},\area(\w{2},\area(\w{2},\w{1})))\otimes[\w{1},[\w{2},[\w{2},\w{1}]]] \\
          &\qquad
          +  \area( \w{2},\area(\w{1},\area(\w{1},\w{2}))) \otimes [\w{2},[\w{1},[\w{1},\w{2}]]]
          + \area(\w{2},\area(\w{1},\area(\w{2},\w{1})))\otimes[\w{2},[\w{1},[\w{2},\w{1}]]] \\
          &\qquad
        +  \area( \w{2},\area(\w{2},\area(\w{1},\w{2})))\otimes [\w{2},[\w{2},[\w{1},\w{2}]]]
      + \area(\w{2},\area(\w{2},\area(\w{2},\w{1})))\otimes[\w{2},[\w{2},[\w{2},\w{1}]]] \Big) \\
   %&= \frac{1}{6} \Big(
   %         \area(\w{1},\area(\w{1},\area(\w{1},\w{2}))) \otimes [\w{1},[\w{1},[\w{1},\w{2}]]] \\
   %       &\qquad
   %       +  \area( \w{1},\area( \w{2}, \area( \w{1},\w{2})))\otimes [\w{1},[\w{2},[\w{1},\w{2}]]] \\
   %       &\qquad
   %       +  \area( \w{2},\area(\w{1},\area(\w{1},\w{2}))) \otimes [\w{2},[\w{1},[\w{1},\w{2}]]] \\
   %       &\qquad
   %     +  \area( \w{2},\area(\w{2},\area(\w{1},\w{2})))\otimes [\w{2},[\w{2},[\w{1},\w{2}]]] \Big)
  \end{align*}

\end{example}

\tikzset{bintrees/.style={%
    grow'=up
  ,every node/.style={scale=.8}%
  ,baseline={([yshift=-0.8ex]current bounding box.center)}
  ,level distance=0.3cm%remove cm
  ,sibling distance=0.3cm
  ,inner sep=1pt
  ,dot/.style={circle,fill, inner sep=0pt, minimum size=0.3em}
  ,squ/.style={fill, inner sep=0pt, minimum size=0.3em}
  },
 wider/.append style={
  sibling distance=0.6cm
  ,level 2/.append style = {sibling distance = 0.3cm}
  },
 bitwider/.append style={
  sibling distance=0.4cm
  ,level 2/.append style = {sibling distance = 0.3cm}
  }}

\newcommand{\bindot}{\tikz[bintrees]{\node[dot]{};}}
\newcommand{\areatree}{\area_{\tikz[bintrees,baseline=(current bounding box.center)]{\node[dot]{};}}}
\newcommand{\brackettree}{\bracket_{\tikz[bintrees,baseline=(current bounding box.center)]{\node[dot]{};}}}
\newcommand{\discreteAreatree}{\discreteArea_{\tikz[bintrees,baseline=(current bounding box.center)]{\node[dot]{};}}}
\newcommand{\binsqu}{\tikz[bintrees]{\node[squ]{};}}
\newcommand{\treett}{\tikz[bintrees,bitwider]{\node[dot]{}child{node {$\tau_1$}} child{node {$\tau_2$}}}}
In general this looks as follows.
\begin{definition}
  \label{def:binaryPlanarTrees}
  Denote by $\binaryPlanarTrees_n$ the set of (complete, rooted) binary planar trees with $n$ leaves labelled with the letters $\word1,\dots,\word\ds$.\index{BinaryPlanarTrees@$\binaryPlanarTrees_n$}
  Given $\tau \in \binaryPlanarTrees_n$ we define $\areatree(\tau)$ (resp. $\brackettree(\tau)$) \index{areas@$\areatree$}\index{bracket@$\brackettree$}
    as the bracketing-out using $\area$ (resp. $[.,.]$).
  For example
  \begin{align*}
    \areatree(\tikz[bintrees]{\node[dot]{}child{node [dot]{}child{node {$\word 1$}}child{node {$\word 2$}}}child{node {$\word 3$}}}) &= \area( \word1, \area(\word2, \word3) ) \\
    \brackettree( \tikz[bintrees,wider]{\node[dot,wider]{}child{node [dot]{}child{node {$\word 1$}}child{node {$\word 2$}}}child{node [dot]{}child{node {$\word 3$}}child{node {$\word 4$}}}} ) &= [ [\word1,\word2],[\word3,\word4] ].
  \end{align*}

  Define a function $c:\binaryPlanarTrees_n\to\R$, which does not depend on the specific letter labels, recursively as follows \index{c@$c$}
  \begin{align*}
    %c( \text{tree with one leaf} ) &= 1 \\
    c (\word{i}) &= 1 \qquad \text{for any $\word {i}$ in $\word1,\dots,\word\ds$}  \\
    %c( (\tau_1 \tau_2) ) &= \frac{c(\tau_1) c(\tau_2) }{2 (|\tau_1|+|\tau_2| - 1)}.
    %c( \Forest{ [ [$\tau_1$, for tree={fill=none, child anchor = south}], [$\tau_2$, for tree={fill=none, child anchor = south}] ] } ) &= 2c(\tau_1) c(\tau_2) (\leaves{\tau_1}+\leaves{\tau_2} - 1),
    c(\treett)&= 2c(\tau_1) c(\tau_2) (\leaves{\tau_1}+\leaves{\tau_2} - 1)
  \end{align*}
  where $\leaves{\tau}$ denotes the number of leaves of the tree $\tau$.
  For example
  \begin{align*}
    c( \word2 ) &= 1 \\
    c(\tikz[bintrees]{\node [dot]{}child{node {$\word 1$}}child{node {$\word 2$}}} ) &= 2\cdot 1 \cdot 1 \cdot (2-1) = 2 \\
    c( \tikz[bintrees]{\node[dot]{}child{node {$\word 3$}}child{node [dot]{}child{node {$\word 1$}}child{node {$\word 2$}}}} ) &= 2\cdot 1 \cdot 2 \cdot (3-1) = 8
  \end{align*}
\end{definition}

\begin{lemma}
  \label{lem:RwithTrees}
  \begin{align*}
    R_n = \sum_{\tau \in \binaryPlanarTrees_n} \frac{1}{c(\tau)}\ \areatree(\tau ) \otimes \brackettree(\tau).
  \end{align*}
  %\begin{align*}
  %  \area( )
  %  \begin{tikzpicture}[grow'=up]
  %    \Tree [ [ [.$\bullet$ ] [.$\bullet$ ] ] [ [.$\bullet$ ] ] ]
  %  \end{tikzpicture}
  %\end{align*}
\end{lemma}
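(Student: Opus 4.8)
The plan is to prove the identity by induction on $n$, feeding the tree Ansatz into the recursion of \Cref{cor:Rrecursion}. The engine of the induction is the canonical decomposition of a binary planar tree: every $\tau \in \binaryPlanarTrees_n$ with $n \ge 2$ is obtained in a unique way by grafting a left subtree $\tau_1 \in \binaryPlanarTrees_\ell$ and a right subtree $\tau_2 \in \binaryPlanarTrees_{n-\ell}$ at a common root, where $1 \le \ell \le n-1$ records the number of leaves of $\tau_1$. Under this decomposition the two bracketing-out maps respect the root split, $\areatree(\tau) = \area(\areatree(\tau_1), \areatree(\tau_2))$ and $\brackettree(\tau) = [\brackettree(\tau_1), \brackettree(\tau_2)]$, directly from \Cref{def:binaryPlanarTrees}.

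For the base case $n=1$ the single-leaf trees are exactly the letters, with $c(\word i)=1$ and $\areatree(\word i)=\brackettree(\word i)=\word i$, so the right-hand side is $\sum_i \word i \otimes \word i$, which equals $R_1$ since $r(\word i)=\word i$. For the inductive step, fix $n \ge 2$ and assume the formula at all lower levels. Substituting the hypotheses for $R_\ell$ and $R_{n-\ell}$ into the recursion and using $(p \otimes q)\leftPrelieSym(p'\otimes q')=\area(p,p')\otimes[q,q']$ gives
\[
\tfrac12\sum_{\ell=1}^{n-1} R_\ell \leftPrelieSym R_{n-\ell} = \tfrac12 \sum_{\ell=1}^{n-1}\sum_{\tau_1 \in \binaryPlanarTrees_\ell}\sum_{\tau_2 \in \binaryPlanarTrees_{n-\ell}}\tfrac{1}{c(\tau_1)c(\tau_2)}\,\area(\areatree(\tau_1),\areatree(\tau_2))\otimes[\brackettree(\tau_1),\brackettree(\tau_2)].
\]
By the root-compatibility noted above together with the grafting bijection, the triple sum collapses to a single sum over $\tau \in \binaryPlanarTrees_n$, with $\areatree(\tau)\otimes\brackettree(\tau)$ carrying the coefficient $\tfrac{1}{2c(\tau_1)c(\tau_2)}$, where $\tau_1,\tau_2$ are the two subtrees of $\tau$.

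The last step is pure bookkeeping of constants, and it is precisely what the recursive definition of $c$ is engineered to absorb: since $\leaves{\tau_1}+\leaves{\tau_2}=n$, we have $c(\tau)=2\,c(\tau_1)\,c(\tau_2)\,(n-1)$, whence $\tfrac{1}{2c(\tau_1)c(\tau_2)}=\tfrac{n-1}{c(\tau)}$. The right-hand side of the recursion therefore equals $(n-1)\sum_{\tau \in \binaryPlanarTrees_n}\tfrac{1}{c(\tau)}\areatree(\tau)\otimes\brackettree(\tau)$, which matches $(n-1)R_n$; dividing by $n-1 \neq 0$ yields the claim. I do not expect a genuine obstacle here: the only points demanding care are that $R_0=0$ (because $r(\emptyWord)=0$), so the $\ell=n$ term of the recursion of \Cref{cor:Rrecursion} drops out and the grafting indeed ranges over $1 \le \ell \le n-1$, and that the orientation of the root split must be used consistently for $\areatree$, $\brackettree$ and $c$.
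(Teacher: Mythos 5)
Your proof is correct and follows essentially the same route as the paper's: induction on $n$, substituting the tree Ansatz into the recursion of \Cref{cor:Rrecursion}, using the grafting bijection together with $\areatree(\tau)=\area(\areatree(\tau_1),\areatree(\tau_2))$, $\brackettree(\tau)=[\brackettree(\tau_1),\brackettree(\tau_2)]$, and letting the recursive definition of $c$ absorb the factor $2(n-1)$. Your remark that $R_0=0$ (so the boundary term of the recursion vanishes) is a point the paper glosses over but handles implicitly in the same way.
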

\begin{remark}
  We note that \cite[Lemma 1]{bib:Roc2003} has
  a slightly more complicated expression for $R_n$,
  since in that work some of terms are factored out, owing to antisymmetry.
  We do not pursue this here
  since the end result, also in \cite{bib:Roc2003}, still contains redundant terms,
  which we do not know how to explicitly get rid of.
  In fact, due to antisymmetry alone, we already know that $c(\tau)$ is not a unique choice for this equation to hold,
  however it remains an interesting and more involved question if it is the only choice which is symmetric, i.e.~well-defined on \emph{non-planar} trees, and invariant under change of the leaf labels.
  
  A further very interesting question is to find a modified $c'$ which may not be symmetric and may depend on the leaf labels, such that the equations still holds, but such that the number of non-zero summands in the equation is minimized for each $n$.
\end{remark}
%\begin{remark}
%  {\color{orange}Rosa: Actually we could switch to binary non-planar trees and
%  get rid of the factor $2^{-1}$ here because of commutativity of
%$\leftPrelieSym$. But we would need to introduce a total order on the set of
%binary non-planar trees in order to define how the bracketing should be done
%for those trees. Maybe that is overly complicated? But at least we should
%mention somewhere that one needs to compute much fewer summands than the above
%formula suggests.}
% {\color{orange} TO DO relate $c$ to some modified tree factorial:
% \begin{equation*}
%  c(\tau)=\frac{1}{2^{\intnodes(\tau)}F(\tau)},
% \end{equation*}
% where $\intnodes(\tau)$ denotes the number of internal nodes in $\tau$, i.e. the number of vertices which are not leaves, and $F$ is recursively defined as
% \begin{align*}
%  F(\Forest{ [] } ) &= 1, \\
%  F( \Forest{ [ [$\tau_1$, for tree={fill=none, child anchor = south}], [$\tau_2$, for tree={fill=none, child anchor = south}] ] } ) &=(\leaves{\tau_1}+\leaves{\tau_2}-1)F(\tau_1)F(\tau_2).
% \end{align*}
% }
%\end{remark}

\begin{proof}[Proof of \ref{lem:RwithTrees}]
 For the purpose of this proof, let $R_n$ be defined as in Corollary \ref{cor:Rrecursion} and 
 \begin{equation*}
  R_n':=\sum_{\tau \in \binaryPlanarTrees_n} \frac{1}{c(\tau)}\ \areatree(\tau ) \otimes \brackettree(\tau).
 \end{equation*}
 We proceed by induction over $n$. We have 
 \begin{equation*}
   R_1=\sum_{|w|=1}w\otimes r(w)=\sum_{|w|=1}w\otimes w=\sum_{|w|=1}\frac{1}{c(w)}\areatree(w)\otimes\brackettree(w)=R_1'.
 \end{equation*}
 Assuming $R_n=R_n'$ holds for some $n\in\mathbb{N}$, we get
 \begin{align*}
  &R_{n+1}=\frac{1}{2n}\sum_{l=1}^{n+1} R_l \leftPrelieSym R_{n-l}\\
  &=\frac{1}{2n} \sum_{l=1}^{n+1}\sum_{\substack{\tau_1\in\binaryPlanarTrees_l,\\\tau_2\in\binaryPlanarTrees_{n+1-l}}}\frac{1}{c(\tau_1)c(\tau_2)}\area(\areatree(\tau_1),\areatree(\tau_2))\otimes[\brackettree(\tau_1),\brackettree(\tau_2)]\\
  &=\sum_{l=1}^{n+1}\sum_{\substack{\tau_1\in\binaryPlanarTrees_l,\\\tau_2\in\binaryPlanarTrees_{n+1-l}}}\frac{1}{c(\treett)}\areatree(\treett)\otimes\brackettree(\treett)\\
  &= \sum_{\tau \in \binaryPlanarTrees_{n+1}} \frac{1}{c(\tau)}\areatree(\tau) \otimes \brackettree(\tau )
  =R_{n+1}'.\qedhere
 \end{align*}
\end{proof}

%\todonotes{RP: Introduce this theorem.}
Rather than working with the recursion for $R_n$ from Corollary \ref{cor:Rrecursion} directly on $\TSTCRing$, 
in the following theorem we will pursue the alternative approach of first applying the $\coeval^{S_h}$ to work on $\TS$, or, to be more specific, on $\areatortkara$ as we will see.

\begin{theorem}
  \label{thm:Rho}
 We have $R=\sum_h \Rho_h \otimes P_h$ where $\Rho_h := \rho(S_h)$ satisfies the recursion
 \begin{equation}\label{eq:Rho_recursion}
  \Rho_h=\frac{1}{|h|-1}\sum_{h_1<h_2}\langle S_h,[P_{h_1},P_{h_2}]\rangle\, \area(\Rho_{h_1},\Rho_{h_2}).
 \end{equation}
 More explicitly, we have
 \begin{equation*}
  \Rho_h=\sum_{\tau}\frac{1}{b(\tau)}q^h_{\tau}\,\areatree(\tau)=\sum_{\tau}\frac{1}{c(\tau)}p^h_{\tau}\,\areatree(\tau)
 \end{equation*}
with
\begin{align*}
 q^h_{\tau}&=\sum_{h_1<h_2}q^{h_1}_{\tau'}q^{h_2}_{\tau''}\langle S_h,[P_{h_1},P_{h_2}]\rangle,\\
 p^h_{\tau}&=\langle S_h,\brackettree(\tau)\rangle=\sum_{h_1,h_2}q^{h_1}_{\tau'}q^{h_2}_{\tau''}\langle S_h,[P_{h_1},P_{h_2}]\rangle
\end{align*}
for $\leaves{\tau},|h|\geq 2$ and
\begin{align*}
  q^{h}_{\word{i}}&=p^{h}_{\word{i}}=\delta_{h,\word{i}},\\
  q^{\word{i}}_{\tau}&=p^{\word{i}}_{\tau}=\delta_{\tau,\word{i}}
\end{align*}
%for $\tau=\treett$
with $b(\tau)=b(\tau')b(\tau'')(\leaves{\tau'}+\leaves{\tau''} -1)$, $b(\word{i})=1$.

\end{theorem}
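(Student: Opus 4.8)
The plan is to read $R=\sum_h\Rho_h\otimes P_h$ off the two expressions for $R$ in \eqref{eq:R}, then to extract the recursion \eqref{eq:Rho_recursion} from the quadratic fixed-point identity of \Cref{cor:Rrecursion}, and finally to unfold that recursion into the two tree expansions.

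First I would settle the identification $\Rho_h=\rho(S_h)$. Since $r(w)\in\primitive$ for every word $w$, the element $R=\sum_w w\otimes r(w)$ lies in the completed tensor product $\TS\mathbin{\hat\otimes}\primitive$. Fix a homogeneous basis $\{P_h\}$ of the free Lie algebra $\primitive$ together with a dual family $\{S_h\}\subset\TS$, $\langle S_h,P_{h'}\rangle=\delta_{h,h'}$ (the dual-basis setup of \Cref{lem:lieAlgebra}). Expanding the right-hand leg of $R$ in $\{P_h\}$ gives $R=\sum_h\Rho_h\otimes P_h$ with $\Rho_h=(\id\otimes\langle S_h,\cdot\rangle)R$. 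Evaluating the coefficient extraction on the second representation $R=\sum_v\rho(v)\otimes v$ yields $\Rho_h=\sum_v\langle S_h,v\rangle\,\rho(v)=\rho\big(\sum_v\langle S_h,v\rangle v\big)=\rho(S_h)$, since $\sum_v\langle S_h,v\rangle v=S_h$ and $\rho$ is linear. For a letter $\word i$ we have $P_{\word i}=\word i$, $S_{\word i}=\word i$, hence $\Rho_{\word i}=\rho(\word i)=\word i$, which is the base case.

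For the recursion I apply the same coefficient extraction to \Cref{cor:Rrecursion}, $(\lift D-\id)R=\tfrac12 R\leftPrelieSym R$. On the left, homogeneity of $P_h$ gives $\lift D R=\sum_h|h|\,\Rho_h\otimes P_h$, so the $P_h$-component is $(|h|-1)\Rho_h$. On the right, $(p\otimes q)\leftPrelieSym(p'\otimes q')=\area(p,p')\otimes[q,q']$ gives $\tfrac12R\leftPrelieSym R=\tfrac12\sum_{h_1,h_2}\area(\Rho_{h_1},\Rho_{h_2})\otimes[P_{h_1},P_{h_2}]$. Each summand is invariant under $h_1\leftrightarrow h_2$, because the sign change from the antisymmetry of $\area$ cancels that of the bracket, and the diagonal terms vanish, so the sum collapses to $\sum_{h_1<h_2}$. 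Expanding $[P_{h_1},P_{h_2}]=\sum_h\langle S_h,[P_{h_1},P_{h_2}]\rangle P_h$ and matching $P_h$-components gives exactly \eqref{eq:Rho_recursion} after dividing by $|h|-1\ge1$. Next I obtain the $q/b$ expansion by induction on $|h|$: substituting the inductive expansions of $\Rho_{h_1},\Rho_{h_2}$ into \eqref{eq:Rho_recursion}, bilinearity of $\area$ together with $\area(\areatree(\tau'),\areatree(\tau''))=\areatree(\tau)$ (for $\tau$ the tree with left and right subtrees $\tau',\tau''$) converts the double tree sum into a single sum over $\tau\in\binaryPlanarTrees_{|h|}$, each $\tau$ arising uniquely from its subtrees; collecting the scalar in front of $\areatree(\tau)$ and using $b(\tau)=b(\tau')b(\tau'')(\leaves{\tau'}+\leaves{\tau''}-1)=b(\tau')b(\tau'')(|h|-1)$ reproduces precisely $\tfrac1{b(\tau)}q^h_\tau$ with the stated recursion for $q^h_\tau$. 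The $p/c$ expansion is more direct: \Cref{lem:RwithTrees} gives $R=\sum_\tau\frac1{c(\tau)}\areatree(\tau)\otimes\brackettree(\tau)$, and applying $\id\otimes\langle S_h,\cdot\rangle$ yields $\Rho_h=\sum_\tau\frac1{c(\tau)}\langle S_h,\brackettree(\tau)\rangle\areatree(\tau)$, so that $p^h_\tau=\langle S_h,\brackettree(\tau)\rangle$; unfolding $\brackettree(\tau)=[\brackettree(\tau'),\brackettree(\tau'')]$ and pairing with $S_h$ gives the asserted bilinear recursion for $p^h_\tau$.

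The delicate point, and the part I expect to be the main obstacle, is the bookkeeping reconciling the two expansions. The $q/b$ form ranges only over ordered pairs $h_1<h_2$ and carries the weight $b$, whereas the $p/c$ form ranges over all labelled trees (hence both orientations at every internal node) and carries $c(\tau)=2^{\leaves{\tau}-1}b(\tau)$; the two families of scalars cannot be matched term by term, because the $\areatree(\tau)$ are linearly dependent through the antisymmetry $\areatree([\tau',\tau''])=-\areatree([\tau'',\tau'])$ and the deeper Tortkara relations. What must be checked is therefore only that both scalar families yield the \emph{same} element $\Rho_h$, which I secure by proving each expansion independently (from the recursion, respectively from \Cref{lem:RwithTrees}) rather than by comparing coefficients; the compensating factor $2^{\leaves{\tau}-1}$ relating $c$ and $b$, together with the restriction $h_1<h_2$, is exactly what makes the antisymmetrization consistent. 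Throughout, one must track the homogeneity constraint that $q^{h_1}_{\tau'}=0$ unless $\leaves{\tau'}=|h_1|$, so that the leaf count of the assembled tree matches $|h|=|h_1|+|h_2|$; this is routine but essential for the single-sum reindexing to be legitimate.
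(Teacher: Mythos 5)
Your proposal is correct and follows essentially the same route as the paper: identify $\Rho_h=\rho(S_h)$ from the two representations of $R$ via the dual pairing with $S_h$ (the paper's $\coeval^{S_h}$), extract the recursion \eqref{eq:Rho_recursion} from \Cref{cor:Rrecursion} by the same coefficient extraction, prove the $q/b$ expansion by induction on $|h|$, and read the $p/c$ expansion off \Cref{lem:RwithTrees}. Your closing remark that the two tree expansions need not (and cannot) be matched coefficient-by-coefficient, but are each established independently, is exactly the stance the paper takes; the aside $c(\tau)=2^{\leaves{\tau}-1}b(\tau)$ is a correct but inessential observation.
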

\begin{proof}
 We have $r=\eval(R)$ with $\Im r=\mathfrak{g}$ and thus
 \begin{equation*}
  r=r\circ \eval(\sum_{h}S_h\otimes P_h)=\eval\Big(\sum_{w,h}\langle w,S_h\rangle \rho(w)\otimes P_h\Big)=\eval\Big(\sum_h\rho(S_h)\otimes P_h\Big),
 \end{equation*}
 which means $R=\sum_{h}\rho(S_h)\otimes P_h$ since $\eval$ is bijective. Putting $\Rho_h:=\rho(S_h)$, we get
 \begin{equation*}
  \sum_h(|h|-1)\Rho_h\otimes P_h=(\lift{D}-\id)R=\tfrac{1}{2}\,R\leftPrelieSym R=\tfrac{1}{2}\,\sum_{h_1,h_2}\area(\Rho_{h_1},\Rho_{h_2})\otimes[P_{h_1},P_{h_2}],
 \end{equation*}
 which by applying $\coeval^{S_h}$ on both sides yields
 \begin{equation*}
  (|h|-1)\Rho_h=\tfrac{1}{2}\sum_{h_1,h_2}\langle S_h,[P_{h_1},P_{h_2}]\rangle\area(\Rho_{h_1},\Rho_{h_2})=\sum_{h_1<h_2}\langle S_h,[P_{h_1},P_{h_2}]\rangle\area(\Rho_{h_1},\Rho_{h_2}).
 \end{equation*}
 Since due to $q_{\tau}^{\word{i}}=\delta_{\tau,\word{i}}$ and $b(\word{i})=1$ we have
 \begin{equation*}
  \Rho_{\word{i}}=\rho(\word{i})=\word{i}=\areatree(\word{i})=\sum_{\tau}\frac{1}{b(\tau)}q^{\word{i}}_\tau\areatree(\tau),
 \end{equation*}
 we obtain by induction
 \begin{align*}
  \Rho_h&=\frac{1}{|h|-1}\sum_{h_1<h_2}\langle S_h,[P_{h_1},P_{h_2}]\rangle\area(\Rho_{h_1},\Rho_{h_2})\\
  &=\frac{1}{|h|-1}\sum_{h_1<h_2}\langle S_h,[P_{h_1},P_{h_2}]\rangle\sum_{\tau_1,\tau_2}\frac{1}{b(\tau_1)b(\tau_2)}q^{h_1}_{\tau_1}q^{h_2}_{\tau_2}\area(\areatree(\tau_1),\areatree(\tau_2))\\
  &=\sum_{\tau_1,\tau_2}\frac{1}{(|h|-1)b(\tau_1)b(\tau_2)}\sum_{h_1<h_2}q_{\tau_1}^{h_1}q_{\tau_2}^{h_2}\langle S_h,[P_{h_1},P_{h_2}]\rangle\areatree(\treett)\\
  &=\sum_{\tau}\frac{1}{(\leaves{\tau'}+\leaves{\tau''} -1)b(\tau')b(\tau'')}\sum_{h_1<h_2}q_{\tau'}^{h_1}q_{\tau''}^{h_2}\langle S_h,[P_{h_1},P_{h_2}]\rangle\areatree(\tau)\\
  &=\sum_{\tau}\frac{1}{b(\tau)}q_\tau^h\areatree(\tau)
 \end{align*}
 Furthermore, due to Lemma \ref{lem:RwithTrees}, we have
 \begin{equation*}
  \Rho_h=\coeval^{S_h}(\Rho_{|h|})=\sum_{\tau\in\binaryPlanarTrees_{|h|}}\frac{1}{c(\tau)}\langle S_h,\brackettree(\tau)\rangle\areatree(\tau)=\sum_{\tau}\frac{1}{c(\tau)}p_\tau^h\areatree(\tau),
 \end{equation*}
 where we have $p_{\word{i}}^h=\langle S_h,\brackettree(\word{i})\rangle=\langle S_h,\word{i}\rangle=\delta_{h,\word{i}}$ as well as $p_{\tau}^{\word{i}}=\langle S_{\word{i}},\brackettree(\tau)\rangle=\langle\word{i},\brackettree(\tau)\rangle=\delta_{\tau,\word{i}}$ and by induction over $\leaves{\tau}\geq 2$
 \begin{align*}
  p_{\tau}^h&=\langle S_h,\brackettree(\tau)\rangle=\langle S_h,[\brackettree(\tau'),\brackettree(\tau'')]\rangle=\sum_{h_1,h_2}\langle S_h,[\langle S_{h_1},\brackettree(\tau')\rangle P_{h_1},\langle S_{h_2},\brackettree(\tau')\rangle P_{h_2}]\rangle\\
  &=\sum_{h_1,h_2}p_{\tau'}^{h_1}p_{\tau''}^{h_2}\langle S_h,[P_{h_1},P_{h_2}]\rangle.\qedhere
 \end{align*}
 \end{proof}

\begin{remark}
 Let $h(\tau)$ be the Hall word corresponding to the Hall tree $\tau$. Then,
 \begin{equation*}
  q^{h_0}_{\tau}=p^{h_0}_{\tau}=\delta_{h(\tau),h_0}.
 \end{equation*}
 This is immediate by definition of $q$ for $|h(\tau)|=\leaves{\tau}=1$, and then by induction over $\leaves{\tau}$ if $\tau$ is a Hall tree, then $\tau',\tau''$ are Hall trees and thus
 \begin{align*}
  q_{\tau}^{h}
  &=\sum_{h_1<h_2}q_{\tau'}^{h_1}q_{\tau''}^{h_2}\langle S_h,[P_{h_1},P_{h_2}]\rangle=\sum_{h_1<h_2}\delta_{h(\tau'),h_1}\delta_{h(\tau''),h_2}\langle S_h,[P_{h_1},P_{h_2}]\rangle\\
  &=\langle S_h,[P_{h(\tau')},P_{h(\tau'')}]\rangle=\delta_{h(\tau),h}
 \end{align*}
 due to $(P_h)_h$ and $(S_h)_h$ being dual bases. For $p_{\tau}^h=\langle S_h,\brackettree(\tau)\rangle$ the claim is immediate.
 
 Note furthermore that we could also have derived
 \begin{equation*}
  \Rho_h=\sum_{\tau}\frac{1}{b(\tau)}q_{\tau}^{h}\areatree(\tau)
 \end{equation*}
 from
 \begin{equation*}
  \Rho_h=\sum_{\tau}\frac{1}{c(\tau)}p_{\tau}^{h}\areatree(\tau)
 \end{equation*}
 by looking at how $q_{\tau}^h$ and $p_{\tau}^h$ relate to each other.
%\todonotes{RP: remark on proof}
\end{remark}
\begin{example}
 \todonotes{JR: Move to some place after the $h$,$S_h$,$\zeta$ tables?}
 In the case of $T(\R^2)$ and the Lyndon words $H$, the values of $\Rho_h$ up to level five are
 \begin{align*}
  \Rho_{\word{1}}&=\word{1},\quad \Rho_{\word{2}}=\word{2},\\
  \Rho_{\word{12}}&=\word{12}-\word{21}=\area(\word{1},\word{2}),\\
  \Rho_{\word{112}}&=\word{112}-\word{121}=\tfrac{1}{2}\,\area(\word{1},\area(\word{1},\word{2})),\\
  \Rho_{\word{122}}&=-\word{212}+\word{221}=\tfrac{1}{2}\,\area(\area(\word{1},\word{2}),\word{2}),\\
  \Rho_{\word{1112}}&=\word{1112}-\word{1121}=\tfrac{1}{6}\,\area(\word{1},\area(\word{1},\area(\word{1},\word{2})))\\
  \Rho_{\word{1122}}&=-\word{1212}+\word{1221}-\word{2112}+\word{2121}=\tfrac{1}{6}\,\area(\word{1},\area(\area(\word{1},\word{2}),\word{2}))+\tfrac{1}{6}\,\area(\area(\word{1},\area(\word{1},\word{2})),\word{2}),\\
  \Rho_{\word{1222}}&=\word{2212}-\word{2221}=\tfrac{1}{6}\,\area(\area(\area(\word{1},\word{2}),\word{2}),\word{2}),\\
  \Rho_{\word{11112}}&=\word{11112}-\word{11121},\\
  \Rho_{\word{11122}}&=-\word{11212}+\word{11221}-\word{12112}+\word{12121}-\word{21112}+\word{21121},\\
  \Rho_{\word{11222}}&=\word{12212}-\word{12221}+\word{21212}-\word{21221}+\word{22112}-\word{22121},\\
  \Rho_{\word{12122}}&=\word{21212}-\word{21221}+\word{22112}-\word{22121},\\
  \Rho_{\word{11212}}&=\word{21112}-\word{21121},\\
  \Rho_{\word{12222}}&=-\word{22212}+\word{22221},
 \end{align*}
 where we gave the area bracketings according to the recursion Equation \eqref{eq:Rho_recursion} up to level four. The trend of the values being just $-1,0,1$ combinations of words does not continue to higher levels, e.g.
 \begin{equation*}
  \Rho_{\word{112212}}=-\word{211212}+\word{211221}-\word{212112}+\word{212121}-3\,\word{221112}+3\,\word{221121}.
 \end{equation*}
\todonotes{JR: Compute $\Rho_h$ for all $h$ in the tables}
\end{example}
\begin{example}
 In the case of $T(\R^3)$ and the Lyndon words $H$, the values of $\Rho_h$ up to level four which are not immediate from the previous example are
 \begin{align*}
  \Rho_{\word{123}}&=\word{123}-\word{132}-\word{312}+\word{321},\\
  \Rho_{\word{132}}&=-\word{213}+\word{231}-\word{312}+\word{321},\\
  \Rho_{\word{1123}}&=\word{1123}-\word{1132}-\word{1312}+\word{1321}-\word{3112}+\word{3121},\\
  \Rho_{\word{1132}}&=-\word{1213}+\word{1231}-\word{1312}+\word{1321}-\word{2113}+\word{2131}-\word{3112}+\word{3121},\\
  \Rho_{\word{1213}}&=-\word{2113}+\word{2131}+\word{3112}-\word{3121},\\
  \Rho_{\word{1223}}&= \word{1223}-\word{1232}+\word{3212}-\word{3221},\\
  \Rho_{\word{1232}}&=-\word{2123}+\word{2132}+\word{2312}-\word{2321}+2\,\word{3212}-2\,\word{3221},\\
  \Rho_{\word{1233}}&=-\word{1323}+\word{1332}-\word{3123}+\word{3132}+\word{3312}-\word{3321},\\
  \Rho_{\word{1322}}&= \word{2213}-\word{2231}+\word{2312}-\word{2321}+\word{3212}-\word{3221},\\
  \Rho_{\word{1323}}&= -\word{3123}+\word{3132}+\word{3213}-\word{3231}+2\,\word{3312}-2\,\word{3321},\\
  \Rho_{\word{1332}}&=\word{2313}-\word{2331}+\word{3213}-\word{3231}+\word{3312}-\word{3321}.
 \end{align*}

\end{example}
\begin{example}
 In the case of $T(\R^2)$ and the standard Hall words $H$, the values of $\Rho_h$ up to level five are
 \begin{align*}
  \Rho_{\word{1}}&=\word{1},\quad \Rho_{\word{2}}=\word{2},\\
  \Rho_{\word{12}}&=\word{12}-\word{21},\\
  \Rho_{\word{121}}&=-\word{112}+\word{121},\\
  \Rho_{\word{122}}&=-\word{212}+\word{221},\\
  \Rho_{\word{1211}}&=\word{1112}-\word{1121},\\
  \Rho_{\word{1221}}&=\word{1212}-\word{1221}+\word{2112}-\word{2121},\\
  \Rho_{\word{1222}}&=\word{2212}-\word{2221},\\
  \Rho_{\word{12111}}&=-\word{11112}+\word{11121},\\
  \Rho_{\word{12211}}&=-\word{11212}+\word{11221}-\word{12112}+\word{12121}-\word{21112}+\word{21121},\\
  \Rho_{\word{12221}}&=-\word{12212}+\word{12221}-\word{21212}+\word{21221}-\word{22112}+\word{22121},\\
  \Rho_{\word{12222}}&=-\word{22212}+\word{22221},\\
  \Rho_{\word{12112}}&=-\word{21112}+\word{21121},\\
  \Rho_{\word{12212}}&=-\word{21212}+\word{21221}-\word{22112}+\word{22121},
 \end{align*}
 where once again the trend of the values being just $-1,0,1$ combinations of words does not continue to higher levels, e.g.
 \begin{align*}
  \Rho_{\word{122112}}&=\word{121212}-\word{121221}+\word{122112}-\word{122121}+2\,\word{211212}-2\,\word{211221}\\
  &\quad+2\,\word{212112}-2\,\word{212121}+3\,\word{221112}-3\,\word{221121}.
 \end{align*}

\end{example}

\section{Coordinates of the first kind}
\label{sec:coordinates}

Let $(P_h)_{h\in H}$ be a basis for the free Lie algebra $\primitive$. \index{P@$P_h$}
For the index set $H$ we have a Hall set in mind (\cite[Section 4]{bib:Reu1993}),
but this is not necessary at this stage.
%
%We describe the concept of coordinates of the first kind in the setting of the free Lie algebra on $d$ letters.
%
%Following \cite{bib:Kaw2009},
%the \textbf{coordinates of the first kind} of a grouplike element $g \in \TC$
%are the coefficients of its logarithm in front of a fixed basis of the Lie algebra.
%Let $H$ be such a basis, \todonotes{ reformulate .. }
%
%\todonotes{ use Reutenauer's notation for basis and dual basis? $P_h, S_h$ .. }
%
Any grouplike element $g \in \grouplike$ can be written as the exponential of a Lie series,
\begin{align}
  \label{eq:coordinatesOfFirstKind}
  g = \exp\left( \sum_{h\in H} c_h(g) P_h \right),
\end{align}
for some uniquely determined $c_h(g) \in \R$.
In fact, there exist unique $\zeta_h \in \TS$, $h\in H$, such that \index{zeta@$\zeta_h$}
$c_h(g) = \Big\langle \zeta_h, g \Big\rangle$.
%Here , for some $\zeta_h \in \TS$.
The $\zeta_h$ are called the \textbf{coordinates of the first kind} (corresponding to $(P_h)_{h\in H}$), see for example \cite{bib:Kaw2009}.
%$c_h(g)$ is sometimes coordinates 
%In the literature, both $c_h$ and 
%we stick to $\zeta_h$

We now formulate this in a way, where we do not have to test against $g \in G$.
Recall the product $\shuffleConcat$ on $\TSTC$: shuffle product on the left and concatenation product on the right.

For words $a,b$
\begin{align*}
  \eval_g( a \otimes b) \conc
  \eval_g( a' \otimes b' )
  &=
  \Big\langle a, g \Big\rangle\ \Big\langle a', g \Big\rangle\ bb'
  =
  \Big\langle a \shuffle a', g \Big\rangle\ bb' \\
  &=
  \eval_g\Big( (a \shuffle a') \otimes (b \conc b' ) \Big)
  =
  \eval_g\Big( (a\otimes b) \shuffleConcat (a'\otimes b') \Big).
\end{align*}
Both expressions are bilinear, so this is true for general elements in $\TSTC$.
Hence $\eval_g$ is an algebra homomorphism from $(\TSTC, \shuffleConcat)$ to $(\TC, \conc)$.
%Hence
%\begin{align*}
%  \eval_g\left( \exp\left( \sum_{h\in H} \zeta_h \otimes h \right) \right)
%  &=
%  \exp\left( \eval_g\left( \sum_{h\in H} \zeta_h \otimes h  \right) \right) \\
%  &=
%  \exp_{\shuffleConcat}\left( \sum_{h\in H} \langle g, \zeta_h \rangle \otimes h \right) \\
%  &=
%  g \\ &= \sum_w \langle g, w \rangle\ w \\ &= \eval_g\left( \sum_w w \otimes w \right).
%\end{align*}
%where we used \eqref{eq:coordinatesOfFirstKind}.
Then on one hand, using first \eqref{eq:coordinatesOfFirstKind} and then the homomorphism property
\begin{align*}
  g
  &= \exp\left( \sum_{h\in H} c_h P_h \right) = \exp\left( \sum_{h\in H} \langle \zeta_h, g \rangle P_h \right)
  = \exp\left( \eval_g \left( \sum_{h\in H} \zeta_h \otimes P_h \right) \right)\\
  &= \eval_g\left( \exp_{\shuffleConcat}\left( \sum_{h\in H} \zeta_h \otimes P_h \right) \right).
\end{align*}
Here, of course, for $x \in \TSTC$,
\begin{align*}
  \exp_{\shuffleConcat}(x)
  &:= \sum_{n\ge 0} \frac{ x^{ \shuffleConcat n} }{n!}
  := \sum_{n\ge 0} \frac{ \overbrace{x \shuffleConcat \dots \shuffleConcat x}^{n \text{ times}} }{n!}.
\end{align*}

On the other hand, trivially
\begin{align*}
  g &= \sum_w \langle w, g \rangle\ w = \eval_g\left( \sum_w w \otimes w \right).
\end{align*}

Since grouplike elements linearly span%
\footnote{This is true level by level, i.e.~projectively. See for example \cite[Lemma 8]{bib:DR2018}.}
all of $\TC$
we get that for all $x \in \TC$
\begin{align*}
  \eval_x\left( \exp_{\shuffleConcat}\left( \sum_{h\in H} \zeta_h \otimes P_h \right) \right)
  =
  \eval_x\left( \sum_w w \otimes w \right),
\end{align*}
which is equivalent to
\begin{align}
  \label{eq:equivalentTo}
  \sum_w w \otimes w = \exp_{\shuffleConcat}\left( \sum_{h\in H} \zeta_h \otimes P_h \right),
\end{align}
respectively
\begin{align*}
  \log_{\shuffleConcat} \sum_w w \otimes w = \sum_{h\in H} \zeta_h \otimes P_h.
\end{align*}
We have arrived at a definition of coordinates of the first kind which does not rely on testing against grouplike elements.

\begin{remark}
  \label{rem:mathcalR}
  Considering $S$ as an element of $\TSTCRing$,
  it is grouplike.
  Indeed, for $a,b \in \TS$,
  \begin{align*}
    \langle \underbar a\mtimes \underbar b, \mdeshuffle S \rangle
    &=
    \langle \underbar a\mtimes \underbar b, \mdeshuffle \sum_w w \underbar w \rangle 
    =
    \sum_w w\, \langle \underbar a\mtimes \underbar b, \mdeshuffle \underbar w \rangle
    =
    \sum_w w \langle \underbar a\mshuffle \underbar b, \underbar w \rangle \\
    &=
    a \shuffle b 
    =
    \langle \underbar a \mtimes \underbar b, \sum_{w,v} w \shuffle v\ \underbar w \mtimes \underbar v \rangle 
    =
    \langle \underbar a \mtimes \underbar b, \sum_w w \underbar w \mtimes \sum_v v \underbar v \rangle 
    \\&=
    \langle \underbar a \mtimes \underbar b, S \mtimes S \rangle.
  \end{align*}
  
  Here for a word $w \in \TS$ we write $\underbar w$ as its realization in $\TSTCRing$.
  Then
  \begin{align*}
    \Lambda := \log_{\shuffleConcat} S,
  \end{align*}
  is primitive.
  The search for coordinates of the first kind then amounts to finding
  a ``simple'' expression for this primitive element.
\end{remark}

~\\

%{\color{orange}
%Let us now consider a Hall basis $(P_h)_{h\in H}$ of the Lie algebra,
%in particular the index set $H$ consists of Hall words \cite[Section 4]{bib:Reu1993}.}
%
One can construct the coordinates $\zeta_h$ as follows.
Pick any $S_h \in T(\R^d), h \in H$, such that %- level by level - $\iota^* S_h$ span $\primitive_n^*$.
\index{S@$S_h$}
\begin{align*}
  \langle S_h, P_{h'} \rangle = \delta_{h,h'}.
\end{align*}
Using \cite[Theorem 5.3]{bib:Reu1993}, if $P_h$ is a Hall basis,
one can actually pick the $S_h$ in such a way that they extend to the dual of the corresponding PBW basis of $T((\R^d))$,
and while this is not necessary here, in the rest of the paper we really mean that the $S_h$ are chosen in this specific way.
Then
\begin{align*}
  \Big\langle  S_h, \log\left( \exp\left( \sum_{h'\in H} c_{h'} P_{h'} \right) \right) \Big\rangle
  &=
  \Big\langle  S_h, \sum_{h'\in H} c_{h'} P_{h'} \Big\rangle
  =
  c_h.
\end{align*}

We want ``to put the logarithm on the other side''. This is indeed possible,
since the logarithm on grouplike elements extends to a \emph{linear} map $\pi_1$
on all of $\TC$ (see \cite[Section 3.2]{bib:Reu1993}
and also \cite{bib:MNT2013} for a general overview on idempotents), given as
\begin{align}
  \label{eq:eulerianIdempotent}
  \pi_1(u) := 
  \sum_{n\ge 1} \frac{(-1)^{n+1}}{n} \sum_{v_1,..,v_n \text{ non-empty}} \langle v_1 \shuffle .. \shuffle v_n, u \rangle\ v_1 \conc .. \conc v_n.
\end{align}
Denote its dual map by $\pi_1^\top$.%
\footnote{\cite[Section 6.2]{bib:Reu1993} uses the notation $\pi_1^*$.} \index{pi1@$\pi_1$ and $\pi_1^\top$}
It is given as
\begin{align*}
  \pi^\top_1(v) = \sum_{n\ge 1} \frac{(-1)^{n+1}}{n} \sum_{u_1,..,u_n \text{ non-empty}} \langle v, u_1 \conc .. \conc u_n \rangle\ u_1 \shuffle .. \shuffle u_n.
\end{align*}

Then for all $h \in H$
\begin{align*}
  \Big\langle  \pi^\top_1 S_h, \exp\left( \sum_{h'\in H} c_{h'} P_{h'} \right) \Big\rangle
  =
  c_h,
\end{align*}
that is, the coordinate of first kind are given by (compare \cite[Theorem 1]{bib:GK2008})
\begin{align}
  \label{eq:pi1starCoordinates}
  \zeta_h = \pi^\top_1 S_h \qquad h \in H.
\end{align}

We note that $\zeta_h$ must of course be independent of the choice of the $S_h$ and this is indeed the case,
since $\ker \pi^\top_1 = (\im \pi_1)^\bot = \primitive_n^\bot$.

\begin{example}
  \label{ex:coordinatesOfTheFirstKind}
  %Let $H$ be a Hall basis for the free Lie algebra.
  %The dual basis to the corresponding PBW basis is described in \cite[Theorem 5.3]{bib:Reu1993}.
  Let $(P_h)_{h\in H}$ be the Lyndon basis (which is a Hall basis, \cite[Section 5]{bib:Reu1993}).
  In the case $d=2$, we give in \autoref{tab:example} the first few elements for $P_h, S_h$ and $\pi_1^\top S_h$,
  where we take $S_h$ as in \cite[Theorem 5.3]{bib:Reu1993}.

  \begin{table}
    \centering
  \newcommand{\specialcell}[2][c]{\begin{tabular}[#1]{@{}l@{}}#2\end{tabular}}
\begin{tabular}{llll}
\shortstack{Lyndon\\word $h$} & $P_h$ & $S_h$ & $\zeta_h=\pi_1^\top S_h$ \\
\hline
$1$     & $\word{1}$                 & $\word{1}$                     & $\word{1}$ \\
$2$     & $\word{2}$                 & $\word{2}$                     & $\word{2}$ \\
$12$    & $[\word{1},\word{2}]$             & $\word{12}$                    & $+\tfrac{1}{2} \word{12} -\tfrac{1}{2} \word{21}$ \\
$112$   & $[\word{1},[\word{1},\word{2}]]$         & $\word{112}$                   & $+\tfrac{1}{6} \word{112} -\tfrac{1}{3} \word{121} +\tfrac{1}{6} \word{211}$ \\
$122$   & $[[\word{1},\word{2}],\word{2}]$         & $\word{122}$                   & $+\tfrac{1}{6} \word{122} -\tfrac{1}{3} \word{212} +\tfrac{1}{6} \word{221}$ \\
$1112$  & $[\word{1},[\word{1},[\word{1},\word{2}]]]$     & $\word{1112}$                  & $-\tfrac{1}{6} \word{1121} +\tfrac{1}{6} \word{1211}$ \\
$1122$  & $[\word{1},[[\word{1},\word{2}],\word{2}]]$     & $\word{1122}$                  & $+\tfrac{1}{6} \word{1122} -\tfrac{1}{6} \word{1212} +\tfrac{1}{6} \word{2121} -\tfrac{1}{6} \word{2211}$ \\
$1222$  & $[[[\word{1},\word{2}],\word{2}],\word{2}]$     & $\word{1222}$                  & $-\tfrac{1}{6} \word{2122} +\tfrac{1}{6} \word{2212}$ \\

$11112$ & $[\word{1},[\word{1},[\word{1},[\word{1},\word{2}]]]]$ & $\word{11112}$                 & $\frac{1}{30}[\tiny\begin{array}{@{}l@{}} -\word{11112}-\word{11121}+4\word{11211}-\word{12111}-\word{21111} \end{array}]$\\
$11122$ & $[\word{1},[\word{1},[[\word{1},\word{2}],\word{2}]]]$ & $\word{11122}$                 & $\frac{1}{30}[\tiny\begin{array}{@{}l@{}} 2\word{11122}-3\word{11212}-3\word{11221}+2\word{12112}+2\word{12121}\\\quad-3\word{12211}+2\word{21112}+2\word{21121}-3\word{21211}+2\word{22111} \end{array}]$\\
$11222$ & $[\word{1},[[[\word{1},\word{2}],\word{2}],\word{2}]]$ & $\word{11222}$                 & $\frac{1}{30}[\tiny\begin{array}{@{}l@{}} 2\word{11222}-3\word{12122}+2\word{12212}+2\word{12221}-3\word{21122}\\\quad+2\word{21212}+2\word{21221}-3\word{22112}-3\word{22121}+2\word{22211} \end{array}]$\\
$12122$ & $[[\word{1},\word{2}],[[\word{1},\word{2}],\word{2}]]$ & $\word{12122} + 3\ \word{11222}$ & $\frac{1}{30}[\tiny\begin{array}{@{}l@{}} 3\word{11222}-2\word{12122}-2\word{12212}+3\word{12221}-2\word{21122}\\\quad+3\word{21212}-2\word{21221}-2\word{22112}-2\word{22121}+3\word{22211} \end{array}]$\\
$11212$ & $[[\word{1},[\word{1},\word{2}]],[\word{1},\word{2}]]$ & $\word{11212} +2\ \word{11122}$ & $\frac{1}{30}[\tiny\begin{array}{@{}l@{}} \word{11122}+\word{11212}+\word{11221}-4\word{12112}+\word{12121}\\\quad+\word{12211}+\word{21112}-4\word{21121}+\word{21211}+\word{22111}\end{array}]$\\
$12222$ & $[[[[\word{1},\word{2}],\word{2}],\word{2}],\word{2}]$ & $\word{12222}$                 & $\frac{1}{30}[\tiny\begin{array}{@{}l@{}} -\word{12222}-\word{21222}+4\word{22122}-\word{22212}-\word{22221} \end{array}]$\\
\end{tabular}
\caption{Example values for the Lyndon basis on two elements. The first column shows the Lyndon words, which are the Hall words for this basis. For each Lyndon word $h$, we show element $P_h$ of the Hall basis which is also the PBW basis element labelled by $h$. Next we show the corresponding element $S_h$ of the dual PBW basis, which also serves as $S_h$ described above. Finally we show the corresponding coordinate of the second kind.}
\label{tab:example}
\end{table}

\begin{table}
    \centering
  \newcommand{\specialcell}[2][c]{\begin{tabular}[#1]{@{}l@{}}#2\end{tabular}}
\begin{tabular}{llll}
\shortstack{Lyndon\\word $h$} & $P_h$ & $S_h$ & $\zeta_h=\pi_1^\top S_h$ \\
\hline
$1$     & $\word{1}$                 & $\word{1}$                     & $\word{1}$ \\
%$2$     & $\word{2}$                 & $\word{2}$                     & $\word{2}$ \\
%$3$     & $\word{3}$                 & $\word{3}$                     & $\word{3}$ \\
$12$    & $[\word{1},\word{2}]$             & $\word{12}$                    & $\tfrac{1}{2} \word{12} -\tfrac{1}{2} \word{21}$ \\
$112$   & $[\word{1},[\word{1},\word{2}]]$         & $\word{112}$               & $\tfrac{1}{6} [\word{112} -2 \word{121} + \word{211}]$ \\
$122$   & $[[\word{1},\word{2}],\word{2}]$         & $\word{122}$                   & $\tfrac{1}{6} [\word{122} -2 \word{212} + \word{221}]$ \\
$123$   & $[\word{1},[\word{2},\word{3}]]$         & $\word{123}$                   & $\tfrac{1}{6}[2\word{123}-\word{132}-\word{213}-\word{231}-\word{312}+2\word{321}]$ \\
$132$   & $[[\word{1},\word{3}],\word{2}]$         & $\word{123}+\word{132}$        & $\tfrac{1}{6}[\word{123}+\word{132}-2\word{213}+\word{231}-2\word{312}+\word{321}]$ \\
$1123$   & $[\word{1},[\word{1},[\word{2},\word{3}]]]$         & $\word{1123}$                   & $\frac{1}{6}[\tiny\begin{array}{@{}l@{}} \word{1123}-\word{1213}-\word{1231}\\\quad+\word{1321}+\word{3121}-\word{3211} \end{array}]$\\
$1132$   & $[\word{1},[[\word{1},\word{3}],\word{2}]]$         & $\word{1123}+\word{1132}$ & $\frac{1}{6}[\tiny\begin{array}{@{}l@{}} \word{1123}+\word{1132}-\word{1213}-\word{1312}\\\quad+\word{2131}-\word{2311}+\word{3121}-\word{3211} \end{array}]$\\
$1213$   & $[[\word{1},\word{2}],[\word{1},\word{3}]]$         & $\word{1123}+\word{1132}+\word{1213}$ & $\frac{1}{6}[\tiny\begin{array}{@{}l@{}} \word{1213}-\word{1312}-\word{2113}+\word{2131}+\word{3112}-\word{3121} \end{array}]$

\end{tabular}
\caption{Example values for the Lyndon basis on three elements. The first column shows the Lyndon words, which are the Hall words for this basis. For each Lyndon word $h$, we show element $P_h$ of the Hall basis which is also the PBW basis element labelled by $h$. Next we show the corresponding element $S_h$ of the dual PBW basis, which also serves as $S_h$ described above. Finally we show the corresponding coordinate of the second kind.}
\label{tab:example3}
\end{table}

  \begin{table}
    \centering
    \newcommand{\specialcell}[2][c]{\begin{tabular}[#1]{@{}l@{}}#2\end{tabular}}
\begin{tabular}{llll}
\shortstack{Hall\\word $h$} & $P_h$ & $S_h$ & $\zeta_h=\pi_1^\top S_h$ \\
\hline
$1$     & $\word{1}$                 & $\word{1}$                     & $\word{1}$ \\
$2$     & $\word{2}$                 & $\word{2}$                     & $\word{2}$ \\
$12$    & $[\word{1},\word{2}]$             & $\word{12}$                    & $\tfrac{1}{2} \word{12} -\tfrac{1}{2} \word{21}$ \\
$121$   & $[[\word{1},\word{2}],\word{1}]$         & $\word{112}+\word{121}$   & $\tfrac{1}{6} [2 \word{121} -\word{112} - \word{211}]$ \\
$122$   & $[[\word{1},\word{2}],\word{2}]$         & $\word{122}$                   & $\tfrac{1}{6} [\word{122} + \word{221}-2 \word{212} ]$ \\
$1211$  & $[[[\word{1},\word{2}],\word{1}],\word{1}]$     & $\word{1112}+\word{1121}+\word{1211}$ & $\tfrac{1}{6}[ \word{1211}- \word{1121}]$ \\
$1221$  & $[[[\word{1},\word{2}],\word{2}],\word{1}]$     & $\word{1122}+\word{1212}+\word{1221}$             & $\tfrac{1}{6}[\word{1212} -\word{1122}-\word{2121} + \word{2211}]$ \\
$1222$  & $[[[\word{1},\word{2}],\word{2}],\word{2}]$     & $\word{1222}$                  & $\tfrac{1}{6} [\word{2212} - \word{2122}]$ \\
$12111$  & $[[[[\word{1},\word{2}],\word{1}],\word{1}],\word{1}]$     & $\begin{array}[t]{@{}l}\word{11112}+\word{11121}\\\quad+\word{11211}+\word{12111}\end{array}$ & $\tfrac{1}{30}[\tiny\begin{array}{@{}l@{}}\word{11112}+\word{11121}-4\word{11211}+\word{12111}+\word{21111}\end{array}]$ \\
$12211$  & $[[[[\word{1},\word{2}],\word{2}],\word{1}],\word{1}]$     & $\begin{array}[t]{@{}l}\word{11122}+\word{11212}+\word{11221}\\\quad+\word{12112}+\word{12121}+\word{12211}\end{array}$ & $\tfrac{1}{30}[ \tiny\begin{array}{@{}l@{}}2\word{11122}-3\word{11212}-3\word{11221}+2\word{121112}+2\word{12121}\\\quad-3\word{12211}+2\word{21112}+2\word{21121}-3\word{21211}+2\word{22111}\end{array}]$ \\
$12221$  & $[[[[\word{1},\word{2}],\word{2}],\word{2}],\word{1}]$     & $\begin{array}[t]{@{}l}\word{11222}+\word{12122}\\\quad+\word{12212}+\word{12221}\end{array}$ & $\tfrac{1}{30}[ \tiny\begin{array}{@{}l@{}}-2\word{11222}+3\word{11112}-2\word{12212}-2\word{12221}+3\word{21122}\\\quad-2\word{21212}-2\word{21221}+3\word{22112}+3\word{22121}-2\word{22211}\end{array}]$ \\
$12222$  & $[[[[\word{1},\word{2}],\word{2}],\word{2}],\word{2}]$     & $\word{12222}$ & $\tfrac{1}{30}[\tiny\begin{array}{@{}l@{}} -\word{12222}-\word{21222}+4\word{22122}-\word{22212}-\word{22221}\end{array}]$ \\
$12112$  & $[[[\word{1},\word{2}],\word{1}],[\word{1},\word{2}]]$     & $\begin{array}[t]{@{}l}4\word{11122}+3\word{11212}+2\word{11221}\\\quad+2\word{12112}+\word{12121}\end{array}$ & $\tfrac{1}{30}[\tiny\begin{array}{@{}l@{}}-\word{11122}-\word{11212}-\word{11221}+4\word{12112}-\word{12121}\\\quad-\word{12211}-\word{21112}+4\word{21121}-\word{21211}-\word{22111}\end{array}]$ \\
$12112$  & $[[[\word{1},\word{2}],\word{2}],[\word{1},\word{2}]]$     & $3\word{11222}+2\word{12122}+\word{12212}$ & $\tfrac{1}{30}[\tiny\begin{array}{@{}l@{}}-3\word{11222}+2\word{12122}+2\word{12212}-3\word{12221}+2\word{21122}\\\quad-3\word{21212}+2\word{21221}+2\word{22112}+2\word{22121}-3\word{22211}\end{array}]$ \\

\end{tabular}
\caption{Example values for the standard Hall basis on two elements. The first column shows the Hall words. For each Hall word $h$, we show element $P_h$ of the Hall basis which is also the PBW basis element labelled by $h$. Next we show the corresponding element $S_h$ of the dual PBW basis, which also serves as $S_h$ described above. Finally we show the corresponding coordinate of the second kind.}
\label{tab:exampleHall}
\end{table}

\end{example}

The expressions given by \eqref{eq:pi1starCoordinates}
can become quite unwieldy.
This motivated Rocha to look for more tractable expressions in \cite{bib:Roc2003}.
We will now reproduce his results using purely algebraic arguments.

%{\color{orange}
%
%~\\
%
%Let $H$ be totally ordered in some way.
%If we can write (orderd exponential)
%\begin{align*}
%  g = \prod_{h \in H} \exp\left( \tilde c_h h \right),
%\end{align*}
%then with $\tilde c_h = \Big\langle g, \xi_h \Big\rangle$,
%the $\xi_h$ are called \emph{coordinates of the second kind}.
%
%Kawski claims, if $H$ is indexed by Hall words, then
%\begin{align*}
%  \xi_a &= a \\
%  \xi_{h'h''} &= \mu_{h'h''} \xi_{h'} \succ \xi_{h''},
%\end{align*}
%for some constants $\mu_{h'h''}$.}

\subsection{Coordinates of first kind in terms of areas-of-areas}

As in Remark \ref{rem:mathcalR} we consider the grouplike element $S \in \TSTCRing$.
The goal is to find a ``simple expression'' for
\begin{align*}
  \Lambda := \log_{\shuffleConcat} S.
\end{align*}

Following Rocha, 
%we sneak in $\lift{r}^{-1} \circ \lift{r}$ and obtain
we obtain
% \todonotes{Rosa: We actually only need $\lift{r}$ here, not $\lift{r}^{-1}$}
% \begin{align*}
%   \Lambda 
%   &= \log_{\shuffleConcat}( \lift{r}^{-1}( \lift{r}[ S ] ) ) \\
%   &= \log_{\shuffleConcat}( \lift{r}^{-1}( R ) ).
% \shortintertext{so}
%   (\lift{r} \circ \exp_{\shuffleConcat})[ \Lambda ]  &= R.
% \end{align*}

\begin{equation*}
 R=\lift{r}(S)=(\lift{r}\circ\exp_{\shuffleConcat})[\Lambda].
\end{equation*}

%We are interested in finding a ``simple'' expression for the logarithm.
%%if we consider \eqref{eq:equivalentTo}
%%without the special form of the expression in the exponential there,
%Paraphrasing \eqref{eq:equivalentTo},
%the goal is to
%%in the notation of Section \ref{sec:background},
%find $\Lambda$ such that \index{Lambda@$\Lambda$}
%\begin{align*}
%  \exp_{\shuffleConcat}( \Lambda ) = S,
%\end{align*}
%and such that $\Lambda$ is a`Lie''.
%
\newcommand{\RHSLie}{RHS-Lie}
%\begin{definition}
%  \todonotes{ do we actually need this definition? }
%  We call $\Lambda \in \homogeneous$ \textbf{\RHSLie}
%  if it is Lie on the right-hand side, i.e. if we can write %for some index set $I$
%  \begin{align*}
%    %\Lambda = \sum_{i\in I} f(i) \otimes h(i),  
%    \Lambda = \sum_{w} w \otimes h(w),  
%  \end{align*}
%  with $h$ \todonotes{ $h$ homogeneous? } taking values in $\primitive$.
%\end{definition}
%
%%{\color{orange}Remark for us:
%%  the old definition was not well-defined.
%%  One can always
%
%\begin{lemma}
%  \label{lem:rD}
%  $X \in \homogeneous$ is {\RHSLie} if and only if
%  \begin{align*}
%    %(\id \otimes r) X = (\id \otimes D) X
%    \lift{ r } X = \lift{ D } X
%  \end{align*}
%\end{lemma}
%\begin{proof}
%  {\color{red}
%   Consider $\homogeneous$ as (a subset of)
%   the space $T((\R^d))$ with coefficients
%   in the ring $T(\R^d)$.
%   Then this follows from \cite[Theorem 3.1 (vi)]{bib:Reu1993}.}
%
%   %
%   % T(\R^d) \hat\otimes T((\R^d)) = T(( T(\R^d) )) ??
%   %
%   %
%\end{proof}

%Using the notation of the previous section, we are looking for $\Lambda$ that is {\RHSLie} and such that
%\begin{align*}
%  \exp_{\shuffleConcat}( \Lambda ) = S, %\lift{r}^{-1}[ \lift{r}[S] ],
%\end{align*}
%that is, applying $\lift r$ on both sides,
%\begin{align*}
% (\lift{r} \circ \exp_{\shuffleConcat})( \Lambda ) ] = \lift{r}[S] = R.
%\end{align*}
%We derived an expression for $R$ in terms of ``areas of areas'' in the previous section.
The last step consists now in inverting $\lift{r} \circ \exp_{\shuffleConcat}$ here.
We shall need the following version of Baker's identity \cite[(1.6.5)]{bib:Reu1993}.
\begin{lemma}
  \label{lem:PQ}
  %\todonotes{ decide on $r$ vs $\lift r$ }
  Let $x,q\in\TC$ (resp. $L, Q \in \TSTCRing$) with $q$ (resp. $Q$) having no coefficient in the empty word $\emptyWord$ (resp. $\lift{\emptyWord}$)
  and $x$ (resp. $L$) primitive. Then
  \begin{equation*}
    r(x\conc q)=[x,r(q)],
    \qquad
    \lift r( L\shuffleConcat Q )
    =
    [ L, \lift r(Q) ]_{\shuffleConcatSymbol}.
  \end{equation*}
\end{lemma}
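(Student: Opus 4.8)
The plan is to first settle the scalar identity $r(x\conc q)=[x,r(q)]$ and then bootstrap to the lifted one. For the scalar statement I would prove the auxiliary formula $r(v\conc w)=\Ad_v\, r(w)$ for every word $v$ and every nonempty word $w$. The base case $v=\word{i}$ is immediate from \eqref{eq:r}, since $r(\word{i}\conc w)=[\word{i},r(w)]=\ad_{\word{i}}r(w)$; an induction on $|v|$, writing $v=\word{i}\conc v'$ and using $\Ad_{\word{i}\conc v'}=\ad_{\word{i}}\circ\Ad_{v'}$ together with the base case applied to the nonempty word $v'\conc w$, gives the general case. Extending linearly in $v$ yields $r(x\conc w)=\Ad_x\,r(w)$ for arbitrary $x\in\TC$, and when $x$ is primitive this equals $\ad_x\,r(w)=[x,r(w)]$ by \eqref{eq:ad}. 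Extending linearly in $w$ over nonempty words then gives $r(x\conc q)=[x,r(q)]$ for every $q$ with vanishing $\emptyWord$-component. Everything here is homogeneous, so the manipulations are valid level by level even for infinite series; and the hypothesis on $q$ is exactly what is needed, since otherwise $r(x\conc\emptyWord)=r(x)\neq 0=[x,r(\emptyWord)]$.

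For the lifted identity I would reduce to the scalar one. Both sides of $\lift{r}(L\shuffleConcat Q)=[L,\lift{r}(Q)]_{\shuffleConcatSymbol}$ are $\R$-bilinear in $(L,Q)$, so it suffices to check it on a spanning set. The key structural input is that the primitive space $\lift{\mathfrak{g}}$ of the $\mathcal{R}$-Hopf algebra $\TSTCRing$ is spanned, projectively, by the elements $a\otimes P$ with $a\in\mathcal{R}=(\TS,\shuffle)$ and $P\in\primitive$ a genuine Lie element; this is Reutenauer's identification of primitives with Lie elements (\cite[Theorem 3.1]{bib:Reu1993}), applied over the commutative characteristic-zero ring $\mathcal{R}$, i.e.\ $\lift{\mathfrak{g}}=\mathcal{R}\mathbin{\hat\otimes}\primitive$. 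It therefore suffices to take $L=a\otimes P$ with $P$ Lie and $Q=\sum_v b_v\otimes v$ with $v$ ranging over nonempty words.

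With these choices I would simply compute both sides. On the left, $\lift{r}(L\shuffleConcat Q)=\sum_v (a\shuffle b_v)\otimes r(P\conc v)$, and the scalar Baker identity (legal because $P$ is primitive and $v$ is nonempty) turns this into $\sum_v (a\shuffle b_v)\otimes[P,r(v)]$. On the right, $[L,\lift{r}(Q)]_{\shuffleConcatSymbol}=L\shuffleConcat\lift{r}(Q)-\lift{r}(Q)\shuffleConcat L=\sum_v(a\shuffle b_v)\otimes\big(P\conc r(v)-r(v)\conc P\big)$, where commutativity of $\shuffle$ collapses the two orderings of the coefficient into the single factor $a\shuffle b_v$. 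The two expressions coincide, which proves the lemma. The main obstacle to watch for is precisely that one may \emph{not} argue term-by-term in the right tensor factor: an individual word $u$ occurring in $L$ is not primitive and $r(u\conc v)\neq[u,r(v)]$ in general, so the primitivity of $L$ as a whole must be used — and it is exactly the reduction to $L=a\otimes P$ with $P$ a true Lie element that makes scalar Baker applicable.
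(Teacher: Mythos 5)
Your proposal is correct. The scalar half is exactly the paper's argument — the paper's proof is the one-line chain $r(x\conc q)=\Ad_x r(q)=\ad_x r(q)=[x,r(q)]$, invoking \cite[Theorem 1.4]{bib:Reu1993} for $\ad_x=\Ad_x$ on Lie elements; you have merely made explicit the induction behind the first equality $r(v\conc w)=\Ad_v r(w)$, which the paper treats as immediate from the definition of $r$ as right-bracketing. Where you diverge is the lifted identity: the paper simply reruns the identical argument inside $\TSTCRing$, viewed as the free associative algebra over the coefficient ring $\mathcal{R}=(\TS,\shuffle)$, citing the same theorem of Reutenauer in that generality to get $\ad_L=\Ad_L$ and hence $\lift{r}(L\shuffleConcat Q)=\Ad_L\lift{r}(Q)=[L,\lift{r}(Q)]_{\shuffleConcatSymbol}$ in one line. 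You instead reduce to the already-proved scalar statement by using the description $\lift{\mathfrak{g}}=\mathcal{R}\mathbin{\hat\otimes}\primitive$ of the primitives, writing $L$ as an $\mathcal{R}$-combination of elements $a\otimes P$ with $P$ a genuine Lie element, and computing both sides directly (with commutativity of $\shuffle$ collapsing the coefficient orderings). Both routes rest on the same underlying fact about primitives of a free associative algebra over a commutative $\Q$-algebra, which the paper's notation section explicitly licenses; yours trades the ring-level citation for an explicit two-line verification, and your closing remark — that one cannot argue word-by-word in the right factor because individual words of a Lie element are not primitive — correctly identifies the only place where the argument could go wrong.
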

\begin{proof}
  For $x$, $L$ Lie, by \cite[Theorem 1.4]{bib:Reu1993}, $\ad_x=\Ad_x$ on $\TC$ and $\ad_L = \Ad_L$ on $\TSTCRing$.
  Hence for $q\in\TC$ and $Q\in\TSTCRing$ any polynomial having no coefficient in the empty word,
  \begin{align*}
    r[x\conc q]
    &=\ad_x r[q]
    =
    \Ad_x r[q]
    =[x,r[q]],\\
    \lift{r}[ L \shuffleConcat Q ]
    &=
    \ad_L \lift r[Q]
    =
    \Ad_L \lift r[Q]
    =
    [L, \lift r[Q] ]_{\shuffleConcatSymbol}. \qedhere
  \end{align*}
\end{proof}

We denote by $[.,.]_\shuffleConcatSymbol$ the Lie bracket on $\TSTC$ coming from the product $\shuffleConcat$. \index{[]@$[.,.]_\shuffleConcatSymbol$}
Note that
\begin{align*}
  [p\otimes p', q\otimes q']_\shuffleConcatSymbol = (p \shuffle q) \otimes [p',q'].
\end{align*}
\begin{remark}
  This is the Lie structure for the pre-Lie structure $\leftPrelie$ (\Cref{rem:preLie}),
  i.e. 
  \begin{equation*}
  [x,y]_\shuffleConcatSymbol = x\shuffleConcat y-y\shuffleConcat x=x \leftPrelie y - y \leftPrelie x=x\succeq y-y\succeq x-y\preceq x+x\preceq y.
  \end{equation*}
\end{remark}
For $x \in \TSTC$ denote by $\ad_{\shuffleConcat;x}$ the corresponding adjunction operator,\index{ads@$\ad_{\shuffleConcat;x}$}
i.e. $\ad_{\shuffleConcat;x} y := \left[ x, y \right]_{\shuffleConcat}$.

Let $\Lambda \in \TSTCRing$ be primitive.
Then, using Lemma \ref{lem:PQ},
\begin{align*}
  \lift{r}\left( \Lambda^{ \shuffleConcat n} \right)
  &=
  [ \Lambda, \lift{r}\left(\Lambda^{n-1}\right) ]_\shuffleConcatSymbol.
\end{align*}
Iterating this, we get
\begin{align*}
  \lift{r}( \Lambda^{\shuffleConcat n} )
  &= 
  (\ad_{\shuffleConcat;\Lambda})^{n-1} \lift{D}\Lambda.
\end{align*}
Hence
\begin{align}
  R
  =
  \lift{r}[ \exp_{\shuffleConcat}( \Lambda ) ]
  =
  \lift{r}\left[ \sum_{n \ge 0} \frac{\Lambda^{\shuffleConcat n}}{n!} \right]
  =
  \sum_{n \ge 1} \frac{(\ad_{\shuffleConcat;\Lambda})^{n-1} }{n!} \lift{D} \Lambda. \label{eq:hence}
\end{align}
%Hence
%\begin{align*}
%  \sum_{n \ge 1} \frac{(\ad_{\shuffleConcat;\Lambda})^{n-1}}{n!} \lift{D} \Lambda = R.
%\end{align*}

This can now be used to recursively construct $\Lambda$ from $R$. Put 
\begin{equation*}
\leftmultilie x_1,\ldots, x_n\rightmultilie:=[x_1,[\ldots,[x_{n-1},x_n]\ldots]_{\shuffleConcatSymbol},\quad\leftmultilie x_1,x_2\rightmultilie:=[x_1,x_2]_{\shuffleConcatSymbol}, \quad \leftmultilie x\rightmultilie:=x.
\end{equation*}

\begin{proposition}
  %\todonotes{Rosa: proof}
 We have $\Lambda_1=R_1$ and
 \begin{equation*}
  \Lambda_n=\frac{1}{n}R_n-\frac{1}{n}\sum_{i=2}^n\frac{1}{i!}\sum_{\substack{n_1,\ldots,n_i\\n_1+\cdots+n_i=n}}n_i\,\leftmultilie \Lambda_{n_1},\ldots,\Lambda_{n_i}\rightmultilie.
 \end{equation*}
\end{proposition}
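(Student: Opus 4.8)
The starting point is the closed form \eqref{eq:hence}, which I would treat as an identity of graded elements and then read off level by level. Write $\Lambda = \sum_{n\ge1}\Lambda_n$ and $R = \sum_{n\ge1}R_n$ for the homogeneous decompositions, recalling that on $\TSTC$ the grading is $|a\otimes b| = |b|$. Two structural facts drive everything. First, $\lift D$ acts as multiplication by the level, so $\lift D\Lambda = \sum_{m\ge1} m\,\Lambda_m$. Second, the bracket $[\cdot,\cdot]_{\shuffleConcatSymbol}$ is additive in the grading, since $[p\otimes p',q\otimes q']_{\shuffleConcatSymbol} = (p\shuffle q)\otimes[p',q']$ has level $|p'|+|q'|$. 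Consequently the left-nested iterated bracket $\leftmultilie\Lambda_{n_1},\dots,\Lambda_{n_i}\rightmultilie$ is homogeneous of level $n_1+\cdots+n_i$.

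Next I would expand the iterated adjoint in \eqref{eq:hence}. Substituting $\Lambda=\sum_n\Lambda_n$ into each of the $i-1$ copies of $\Lambda$ appearing in $(\ad_{\shuffleConcat;\Lambda})^{i-1}$, and using $\lift D\Lambda=\sum_{n_i}n_i\Lambda_{n_i}$ for the innermost argument, the weight $n_i$ detaches and one obtains
\[
  (\ad_{\shuffleConcat;\Lambda})^{i-1}\lift D\Lambda
  = \sum_{n_1,\dots,n_i\ge1} n_i\,\leftmultilie\Lambda_{n_1},\dots,\Lambda_{n_i}\rightmultilie.
\]
Plugging this back into \eqref{eq:hence} yields
\[
  R=\sum_{i\ge1}\frac{1}{i!}\sum_{n_1,\dots,n_i\ge1}n_i\,\leftmultilie\Lambda_{n_1},\dots,\Lambda_{n_i}\rightmultilie,
\]
where the $i=1$ term is read with the convention $\leftmultilie\Lambda_{n_1}\rightmultilie=\Lambda_{n_1}$, so that it reproduces $\sum_{n_1}n_1\Lambda_{n_1}=\lift D\Lambda$. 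Applying $\proj_n$ and using the additivity of the grading noted above restricts the inner sum to compositions $n_1+\cdots+n_i=n$; in particular only $1\le i\le n$ contribute.

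Finally I would isolate the $i=1$ contribution, which after projection is exactly $n\,\Lambda_n$, from the level-$n$ identity
\[
  R_n = n\,\Lambda_n + \sum_{i=2}^n\frac1{i!}\sum_{n_1+\cdots+n_i=n}n_i\,\leftmultilie\Lambda_{n_1},\dots,\Lambda_{n_i}\rightmultilie,
\]
and solve for $\Lambda_n$; this gives precisely the claimed recursion. The base case $n=1$ is immediate, since the sum over $i\ge2$ is empty and $\Lambda_1=R_1$. I would also record that the recursion is genuinely well-defined as a recursion: because $i\ge2$ and each $n_j\ge1$, every $n_j\le n-1$, so the right-hand side involves only $\Lambda_1,\dots,\Lambda_{n-1}$ together with $R_n$.

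The computation is essentially bookkeeping, so there is no deep obstacle; the one place to be careful is the expansion of $(\ad_{\shuffleConcat;\Lambda})^{i-1}\lift D\Lambda$, ensuring the derivation $\lift D$ acts only on the innermost (rightmost) slot so that it produces the single weight $n_i$ rather than a symmetrized sum of weights. This is exactly what the left-nested bracket convention $\leftmultilie\cdots\rightmultilie$ encodes, and it is the reason the coefficient $n_i$ (and not $n_1$, nor a symmetric average) appears in the final formula.
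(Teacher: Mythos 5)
Your proposal is correct and follows essentially the same route as the paper: project the closed form \eqref{eq:hence} to level $n$, use that $\lift D$ multiplies by the level and that the bracket is additive in the grading so that $(\ad_{\shuffleConcat;\Lambda})^{i-1}\lift D\Lambda$ expands into $\sum n_i\,\leftmultilie\Lambda_{n_1},\dots,\Lambda_{n_i}\rightmultilie$, then isolate the $i=1$ term and solve for $\Lambda_n$. Your added remarks on why the weight $n_i$ attaches only to the innermost slot and why the recursion is well-founded are sound bookkeeping that the paper leaves implicit.
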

\begin{proof}
 Rewriting Equation \eqref{eq:hence} for the homogeneous part $R_n$ yields
 \begin{align*}
  R_n&=\lift{D}\Lambda_n+\frac{1}{2}\sum_{m=1}^{n-1}[\Lambda_m,\lift{D}\Lambda_{n-m}]_{\shuffleConcatSymbol}+\sum_{i=3}^n \frac{1}{i!}\sum_{\substack{n_1,\ldots,n_1\\n_1+\cdots+n_1=n}} \leftmultilie \Lambda_{n_1},\ldots,\Lambda_{n_{i-1}},\lift{D}\Lambda_{n_i}\rightmultilie\\
  &=n\Lambda_n+\sum_{m=1}^{n-1}(n-m)[\Lambda_m,\Lambda_{n-m}]_{\shuffleConcatSymbol}+\sum_{i=3}^n \frac{1}{i!}\sum_{\substack{n_1,\ldots,n_1\\n_1+\cdots+n_1=n}} n_i\leftmultilie \Lambda_{n_1},\ldots,\Lambda_{n_{i-1}},\Lambda_{n_i}\rightmultilie\\
  &=n\Lambda_n+\sum_{i=2}^n \frac{1}{i!}\sum_{\substack{n_1,\ldots,n_1\\n_1+\cdots+n_1=n}} n_i\leftmultilie \Lambda_{n_1},\ldots,\Lambda_{n_i}\rightmultilie,
 \end{align*}
 which shows the claim.
\end{proof}

\begin{example}
  \label{ex:letUsSpell}
  Let us spell out the first few summands of \eqref{eq:hence},
  \begin{align*}
    R =
    \lift{D} \Lambda
    + \frac{1}{2!} \left[ \Lambda, \lift{D} \Lambda \right]_\shuffleConcatSymbol
    + \frac{1}{3!} \left[ \Lambda, \left[ \Lambda, \lift{D} \Lambda \right]_\shuffleConcatSymbol \right]_\shuffleConcatSymbol
    + \frac{1}{4!} \left[ \Lambda, \left[ \Lambda, \left[ \Lambda, \lift{D} \Lambda \right]_\shuffleConcatSymbol \right]_\shuffleConcatSymbol \right]_\shuffleConcatSymbol
    + ..
  \end{align*}

  Level by level (remember that $\Lambda_0 = 0$), we see
  \begin{align*}
    R_1&=\Lambda_1 \\
    R_2&=2 \Lambda_2 + \tfrac{1}{2!} [\Lambda_1,\Lambda_1]_\shuffleConcatSymbol\\
    R_3&=3 \Lambda_3
    +
    \tfrac{1}{2!} \big( [\Lambda_1, 2 \Lambda_2]_\shuffleConcatSymbol + [\Lambda_2, \Lambda_1]_\shuffleConcatSymbol \big)
    +
    \tfrac{1}{3!} [\Lambda_1, [\Lambda_1, \Lambda_1 ]_\shuffleConcatSymbol]_\shuffleConcatSymbol\\
    R_4&=4 \Lambda_4
    +
    \tfrac{1}{2!}
    \big(
      [\Lambda_1, 3 \Lambda_3]_\shuffleConcatSymbol
      +
      [\Lambda_2, 2 \Lambda_2]_\shuffleConcatSymbol
      +
      [\Lambda_3, \Lambda_1]_\shuffleConcatSymbol \big)\\
    &\quad\,\,\,\,+
    \tfrac{1}{3!}
    \big(
      [\Lambda_1, [\Lambda_1,2 \Lambda_2]_\shuffleConcatSymbol]_\shuffleConcatSymbol
      +
      [\Lambda_1, [\Lambda_2,\Lambda_1]_\shuffleConcatSymbol]_\shuffleConcatSymbol
      +
      [\Lambda_2, [\Lambda_1,\Lambda_1]_\shuffleConcatSymbol]_\shuffleConcatSymbol
    \big)\\
    &\quad\,\,\,\,+
    \tfrac{1}{4!}
    [\Lambda_1, [\Lambda_1, [\Lambda_1, \Lambda_1 ]_\shuffleConcatSymbol]_\shuffleConcatSymbol]_\shuffleConcatSymbol
  \end{align*}

  %{\color{orange} THIS NEEDS TO BE MADE MORE CONRETE.

  %  Or in terms of trees (the last line seems wrong)
  %  \begin{align*}
  %    \Lambda_1 &= \Forest{ [] } \\
  %    \Lambda_2 &= \frac{1}{2} c\left(\Forest{ [ [], [] ] }\right) \Forest{ [ [], [] ] } - \frac{1}{4} \Forest{ [, for tree={rectangle}, [, for tree={circle}],[,for tree={circle}]]} \\
  %    \Lambda_3 &=
  %    \frac{1}{3} c\left(\Forest{ [ [], [[],[]] ] }\right) \Forest{ [ [], [[],[]] ] }
  %    +
  %    \frac{1}{3} c\left(\Forest{ [ [[],[]], [] ] }\right) \Forest{ [ [[],[]], [] ] }
  %    -
  %    \frac{1}{3} c\left(\Forest{ [ [], [] ] }\right)
  %    \Forest{ [, for tree={rectangle}, [, for tree={circle}],[,for tree={circle}, [],[]]]}
  %    -
  %    \frac{1}{6} c\left(\Forest{ [ [], [] ] }\right)
  %    \Forest{ [, for tree={rectangle}, [,for tree={circle}, [],[]], [, for tree={circle}] ]}
  %    -
  %    \frac{1}{18}
  %    \Forest{ [, for tree={rectangle}, [, for tree={circle}],[,for tree={rectangle}, [,for tree={circle}],[,for tree={circle}]]]}
  %  \end{align*}

  %  %\begin{align*}
  %  %  \Forest{ [,for tree={rectangle}, [,for tree={rectangle}, [] ],[] ] }
  %  %\end{align*}
  %}

  Plugging in the expressions from Example \ref{ex:R} in for $R$,
  we get for $d=2$
  \begin{align*}
    \Lambda_1 &= R_1 = \word{1} \otimes \word{1} + \word{2} \otimes \word{2} \\
    \Lambda_2 &= \tfrac{1}{2} R_2
              %&= \tfrac{1}{4} R_1 \leftPrelieSym R_1 \\
              = \tfrac{1}{4} \big( \area(\word{1},\word{2})\otimes [\word{1},\word{2}] + \area(\word{2},\word{1})\otimes[\word{2},\word{1}] \big)
              = \tfrac{1}{2} \area(\word{1},\word{2})\otimes[\word{1},\word{2}]  \\
    \Lambda_3 &= \tfrac{1}{3}
                \left( R_3
                  - \tfrac{1}{2} [\Lambda_1, 2 \Lambda_2 ]_\shuffleConcatSymbol
                  - \tfrac{1}{2} [\Lambda_2, \Lambda_1 ]_\shuffleConcatSymbol
                  - \tfrac{1}{3} [\Lambda_1, [\Lambda_1, \Lambda_1]_\shuffleConcatSymbol]_\shuffleConcatSymbol \right)
              = \tfrac{1}{3}
                \left( R_3 - \tfrac{1}{2} [\Lambda_1, \Lambda_2 ]_\shuffleConcatSymbol \right) \\
              %&= 
              %\tfrac{1}{3} \left( \tfrac{1}{2} R_1 \leftPrelieSym R_2 - \tfrac{1}{2} [\Lambda_1,\Lambda_2] \right) \\
              &=
              \tfrac{1}{6}
              \area(\word{1},\area(\word{1},\word{2}))
              \otimes
              [\word{1},[\word{1},\word{2}]]
              +
              \tfrac{1}{6}
              \area(\word{2},\area(\word{1},\word{2}))
              \otimes
              [\word{2},[\word{1},\word{2}]] \\
              &\qquad
              -
              \tfrac{1}{12}
              \left( \word{1} \shuffle \area(\word{1},\word{2}) \right)
              \otimes
              [\word{1},[\word{1},\word{2}]]
              -
              \tfrac{1}{12}
              \left( \word{2} \shuffle \area(\word{1},\word{2}) \right)
              \otimes
              [\word{2},[\word{1},\word{2}]] \\
    \Lambda_4 &= 
    \tfrac{1}{4}
    \bigl\{ R_4
    -
    \tfrac{1}{2!}
    \big(
      [\Lambda_1, 3 \Lambda_3]_\shuffleConcatSymbol
      +
      [\Lambda_2, 2 \Lambda_2]_\shuffleConcatSymbol
      +
      [\Lambda_3, \Lambda_1]_\shuffleConcatSymbol \big) \\
              &\qquad
    -
    \tfrac{1}{3!}
    \big(
      [\Lambda_1, [\Lambda_1,2 \Lambda_2]_\shuffleConcatSymbol]_\shuffleConcatSymbol
      +
      [\Lambda_1, [\Lambda_2,\Lambda_1]_\shuffleConcatSymbol]_\shuffleConcatSymbol
      +
      [\Lambda_2, [\Lambda_1,\Lambda_1]_\shuffleConcatSymbol]_\shuffleConcatSymbol
    \big) \\
    &\qquad
    -
    \tfrac{1}{4!}
    [\Lambda_1, [\Lambda_1, [\Lambda_1, \Lambda_1 ]_\shuffleConcatSymbol]_\shuffleConcatSymbol]_\shuffleConcatSymbol \bigr\} \\
    &= 
    \tfrac{1}{4}
    \bigl( R_4
    -
    [\Lambda_1, \Lambda_3]_\shuffleConcatSymbol
    -
    \tfrac{1}{3!} [\Lambda_1, [\Lambda_1,\Lambda_2]_\shuffleConcatSymbol]_\shuffleConcatSymbol
    \bigr) \\
    &=
    \scalebox{0.7}{$\tfrac{1}{24}
    \Big(
        \area( \word1, \area( \word1, \area(\word1,\word2))) \otimes [\word1,[\word1,[\word1,\word2]]]
        +
      \area( \word1, \area( \word2, \area(\word1,\word2))) \otimes [\word1,[\word2,[\word1,\word2]]] $} \\
    &\qquad\qquad
    \scalebox{0.7}{
        $+
        \area( \word2, \area( \word1, \area(\word1,\word2))) \otimes [\word2,[\word1,[\word1,\word2]]]
        +
      \area( \word2, \area( \word2, \area(\word1,\word2))) \otimes [\word2,[\word2,[\word1,\word2]]]$}
      \\
    &\qquad\qquad
    \scalebox{0.7}{
      $-
        (\word1 \shuffle \area(\word1, \area(\word1,\word2))) \otimes [\word1,[\word1,[\word1,\word2]]]
        -
      (\word2 \shuffle \area(\word1, \area(\word1,\word2))) \otimes [\word2,[\word1,[\word1,\word2]]]$} \\
    &\qquad\qquad 
    \scalebox{0.7}{
      $-
        (\word1 \shuffle \area(\word2, \area(\word1,\word2))) \otimes [\word1,[\word2,[\word1,\word2]]]
        -
      (\word2 \shuffle \area(\word2, \area(\word1,\word2))) \otimes [\word2,[\word2,[\word1,\word2]]]
    \Big)$}
  \end{align*}
\end{example}
\begin{remark}
  Comparing with \cite[p.322]{bib:Roc2003} we note that we correct some of the coefficients appearing in $\Lambda_3$ and $\Lambda_4$ there.
\end{remark}

\begin{definition}
  %\todonotes{ before submission: this needs to be cleaned up  .. }

  Let $\widetilde\binaryPlanarTrees_n$ be binary planar trees, \index{binaryPlanarTreesw@$\widetilde\binaryPlanarTrees_n$}
  with two types of inner nodes, $\bindot$ and $\binsqu$,
  and such that the subset of all $\binsqu$ nodes
  is either empty or forms a subtree with the same root as the tree itself. In other words the square nodes are all connected to the root.

  Define $e: \widetilde\binaryPlanarTrees_n \to \R$ as follows.
  If the root of $\tau$ is $\bindot$, then
  \begin{align*}
    e(\tau) := \frac{1}{n c(\tau)},
  \end{align*}
  where $c$ was defined in Lemma \ref{lem:RwithTrees}.
  Otherwise, we can write $\tau$ uniquely as
  \begin{align*}
    %\tau = [\tau_1, [\tau_2, \dots ,[\tau_{\ell-1},\tau_\ell]_\shuffleConcatSymbol \dots ]_\shuffleConcatSymbol ]_\shuffleConcatSymbol,
    \tau &=\tikz[bintrees]{\node[squ]{}child{node{$\tau^{(1)}$}}
           child{node[squ]{}
             child{node{$\tau^{(2)}$}}
               child{node{$\cdot$}child{node[xshift=0.4ex]{$\cdot$}
                   child{node[squ,xshift=0.4ex]{}[sibling distance = 1.5em]
                   child{node{$\tau^{(\ell-1)}$}}
                   child{node{$\tau^{(\ell)}$}}
               }}}}}
               = (\tau^{(1)} \rightarrow_\shuffleConcatSymbol (\tau^{(2)} \rightarrow_\shuffleConcatSymbol (\dots \rightarrow_\shuffleConcatSymbol (\tau^{(\ell-1)} \rightarrow_\shuffleConcatSymbol \tau^{(\ell)} )))),
  \end{align*}
  for some $\ell = \ell(\tau) \ge 2$, $\tau^{(1)}, \dots, \tau^{(\ell-1)} \in \widetilde\binaryPlanarTrees$
  and $\tau^{(\ell)} \in \binaryPlanarTrees$.
  Here $\sigma \rightarrow_\shuffleConcatSymbol \rho$ is the grafting, to a new root of type $\shuffleConcatSymbol$,
  with $\sigma$ on the left and $\rho$ on the right.
  Then %\todonotes{Rosa: $\frac{1}{n}$ missing here?}
  \begin{align*}
    e(\tau) := - \sum_{j=2}^{\ell(\tau)} \frac{\leaves{ \tau^{(\ge j)} } e( \tau^{(\ge j)} )}{j! \leaves{\tau}} \left( \prod_{i=1}^{j-1} e(\tau^{(i)}) \right) ,
  \end{align*}
  where
  \begin{align*}
    \tau^{(\ge j)} &:=
    \tikz[bintrees]{\node[squ]{}child{node{$\tau^{(j)}$}}
           child{node[squ]{}[sibling distance = 1.2em]
             child{node{$\tau^{(j+1)}$}}
               child{node{$\cdot$}child{node[xshift=0.4ex]{$\cdot$}
                   child{node[squ,xshift=0.4ex]{}[sibling distance = 1.5em]
                     child{node{$\tau^{(\ell-1)}$}}
                     child{node{$\tau^{(\ell)}$}}
               }}}}}
               =(\tau^{(j)} \rightarrow_\shuffleConcatSymbol (\tau^{(j+1)} \rightarrow_\shuffleConcatSymbol (\dots \rightarrow_\shuffleConcatSymbol (\tau^{(\ell-1)} \rightarrow_\shuffleConcatSymbol \tau^{(\ell)})))), \qquad j=1,\dots,\ell-1 \\
               \tau^{(\ge \ell)} &:= \tau^{(\ell)}.
  \end{align*}

  Finally, for a tree $\tau \in \widetilde\binaryPlanarTrees_n$ and a word $w$ of length $n$,
  define $\widetilde\areatree(\tau)$ as \index{areasw@$\widetilde\areatree$}
  bracketing out using $\area$
  if a node of type $\bullet$ is encountered
  and multiplying using $\shuffle$ when a node $\shuffleConcatSymbol$ is encountered.
\end{definition}

\begin{example}
  The trees in $\widetilde\binaryPlanarTrees_2$ are
  \begin{align*}
    \tikz[bintrees]{\node [dot]{}child{node {$\word i$}}child{node {$\word j$}}},
    \tikz[bintrees]{\node [squ]{}child{node {$\word i$}}child{node {$\word j$}}}
  \end{align*}
  for letters $\word i$ and $\word j$, and the trees in $\widetilde\binaryPlanarTrees_3$ are
  \begin{align*}
    \tikz[bintrees]{\node[dot]{}child{node {$\word i$}}child{node [dot]{}child{node {$\word j$}}child{node {$\word k$}}}},
    \tikz[bintrees]{\node[dot]{}child{node [dot]{}child{node {$\word i$}}child{node {$\word j$}}}child{node {$\word k$}}},
    \tikz[bintrees]{\node[squ]{}child{node {$\word i$}}child{node [dot]{}child{node {$\word j$}}child{node {$\word k$}}}},
    \tikz[bintrees]{\node[squ]{}child{node [dot]{}child{node {$\word i$}}child{node {$\word j$}}}child{node {$\word k$}}},
    \tikz[bintrees]{\node[squ]{}child{node {$\word i$}}child{node [squ]{}child{node {$\word j$}}child{node {$\word k$}}}},
    \tikz[bintrees]{\node[squ]{}child{node [squ]{}child{node {$\word i$}}child{node {$\word j$}}}child{node {$\word k$}}},
  \end{align*}
  for letters $\word i$, $\word j$ and $\word k$.

\newcommand{\squaretwothree}{\tikz[bintrees]{\node [squ]{}child{node {$\word 2$}}child{node {$\word 3$}}}}
  We have
  \begin{align*}
    e( \word{2} )
    &=
    c( \word{2} ) = 1 \\
    e (\squaretwothree)
      &=
      - \frac{1}{2\cdot 2} e( \word{2} ) \leaves{ \word3 } e( \word{3} )
      =
      -\frac{1}{4} \\
    e( \tikz[bintrees]{\node[squ]{}child{node {$\word 1$}}child{node [squ]{}child{node {$\word 2$}}child{node {$\word 3$}}}} )
    &=
    - \frac{1}{3} \left( \frac{1}{2!} e( \word1 ) \leaves{ \squaretwothree } e( \squaretwothree ) + \frac{1}{3!} e( \word2 ) e( \word2 ) \leaves{\word3} e( \word3) \right)
    =
    - \frac{1}{3}
    \left(
      - \frac{1}{4} + \frac{1}{6}
    \right) = \frac{1}{36}.
  \end{align*}

  And
  \begin{align*}
    \widetilde\areatree(\tikz[bintrees]{\node[squ]{}child{node {$\word 1$}}child{node [dot]{}child{node {$\word 2$}}child{node {$\word 3$}}}} )
    =
    \word1 \shuffle \area(\word2,\word3).
  \end{align*}
\end{example}

\begin{theorem}
  \label{thm:firstExpressionForLambda}%\todonotes{Rosa: proof}
  %\todonotes{Rosa: is this the unique $e(\tau)$ such that this equation holds?}
  %\begin{align*}
  %S = \exp_{\shuffleConcatSymbol}\left( \Lambda \right),
  %\end{align*}
  Then
  \begin{align*}
    \Lambda_n = \sum_{\tau \in \widetilde\binaryPlanarTrees_n} e(\tau)\ \widetilde\areatree(\tau) \otimes \brackettree(\tau )
    \,\in\quadsymlie:=\gen{\quadsym}{[\cdot,\cdot]_\shuffleConcatSymbol}.
  \end{align*}
\end{theorem}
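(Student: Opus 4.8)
The plan is to prove both assertions by induction on $n$, using the recursion for $\Lambda_n$ from the preceding Proposition together with the tree expansion of $R_n$ from Lemma \ref{lem:RwithTrees}. For the base case $n=1$, the only trees in $\widetilde\binaryPlanarTrees_1$ are the single leaves $\word{i}$, for which $e(\word{i})=1$, $\widetilde\areatree(\word{i})=\word{i}$ and $\brackettree(\word{i})=\word{i}$, so the right-hand side is $\sum_{|w|=1} w\otimes w = R_1 = \Lambda_1$. For the inductive step I would substitute the induction hypothesis $\Lambda_m=\sum_{\sigma\in\widetilde\binaryPlanarTrees_m} e(\sigma)\,\widetilde\areatree(\sigma)\otimes\brackettree(\sigma)$ (valid for all $m<n$) into the recursion, and match the resulting terms against the two kinds of trees, dot-rooted and square-rooted.

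The term $\tfrac1n R_n$ accounts for the dot-rooted trees. By Lemma \ref{lem:RwithTrees} we have $\tfrac1n R_n=\sum_{\tau\in\binaryPlanarTrees_n}\tfrac{1}{n\,c(\tau)}\areatree(\tau)\otimes\brackettree(\tau)$, and since a tree in $\widetilde\binaryPlanarTrees_n$ has a dot root precisely when it carries no square nodes (the square nodes, if any, must contain the root), the dot-rooted trees of $\widetilde\binaryPlanarTrees_n$ are exactly the trees of $\binaryPlanarTrees_n$, on which $\widetilde\areatree=\areatree$ and $e(\tau)=\tfrac{1}{n\,c(\tau)}$. Hence this term contributes exactly the dot-rooted part of the claimed sum.

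The bracket sum accounts for the square-rooted trees. Expanding each iterated bracket by multilinearity and the identity $[p\otimes q,p'\otimes q']_\shuffleConcatSymbol=(p\shuffle p')\otimes[q,q']$, one sees that $\leftmultilie\Lambda_{n_1},\dots,\Lambda_{n_i}\rightmultilie$ produces, for each tuple $(\sigma_1,\dots,\sigma_i)$ of trees with $\leaves{\sigma_k}=n_k$, the tensor $\big(\widetilde\areatree(\sigma_1)\shuffle\cdots\shuffle\widetilde\areatree(\sigma_i)\big)\otimes[\brackettree(\sigma_1),[\cdots,[\brackettree(\sigma_{i-1}),\brackettree(\sigma_i)]]]$, which equals $\widetilde\areatree(\tau)\otimes\brackettree(\tau)$ for the square-rooted tree $\tau=\sigma_1\rightarrow_\shuffleConcatSymbol(\cdots\rightarrow_\shuffleConcatSymbol\sigma_i)$; here the left factor uses commutativity and associativity of $\shuffle$ along the square spine, and the right factor uses that the spine is right-combed. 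The key combinatorial step is then to regroup these contributions by the resulting tree: for a fixed square-rooted $\tau$ with canonical spine decomposition $\tau^{(1)},\dots,\tau^{(\ell)}$, the tuples mapping to $\tau$ are precisely $(\tau^{(1)},\dots,\tau^{(j-1)},\tau^{(\ge j)})$ for $j=2,\dots,\ell$, because the maximal right-combed spine of $\tau$ reads off $\sigma_k=\tau^{(k)}$ for $k<i$ and $\sigma_i=\tau^{(\ge i)}$ with $i=j$. Summing the coefficients $-\tfrac1n\cdot\tfrac1{j!}\cdot n_i\cdot\prod_k e(\sigma_k)$ over $j$, with $n_i=\leaves{\tau^{(\ge j)}}$ and $n=\leaves{\tau}$, reproduces verbatim the defining formula $e(\tau)=-\sum_{j=2}^{\ell}\tfrac{\leaves{\tau^{(\ge j)}}\,e(\tau^{(\ge j)})}{j!\,\leaves{\tau}}\prod_{i=1}^{j-1}e(\tau^{(i)})$.

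I expect the main obstacle to be precisely this regrouping: one must check that the assignment $(\sigma_1,\dots,\sigma_i)\mapsto\tau$ is a bijection onto pairs $(\tau,j)$ with $2\le j\le\ell(\tau)$, i.e.\ that the canonical spine decomposition recovers the blocks $\sigma_k$ unambiguously even when some $\sigma_k$ are themselves square-rooted (their square nodes then sit off the spine but stay connected to the root), and that every such tuple contributes the identical tensor $\widetilde\areatree(\tau)\otimes\brackettree(\tau)$. Finally, the membership $\Lambda_n\in\quadsymlie$ follows by a separate easy induction: $R_n\in\quadsym\subseteq\quadsymlie$ by Corollary \ref{cor:Rrecursion}, while the bracket sum lies in $\quadsymlie$ since $\quadsymlie$ is by definition the Lie subalgebra of $(\TSTC,[\cdot,\cdot]_\shuffleConcatSymbol)$ generated by $\quadsym$, and is therefore closed under $[\cdot,\cdot]_\shuffleConcatSymbol$.
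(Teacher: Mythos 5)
Your proposal is correct and follows essentially the same route as the paper's proof: induction on $n$ via the recursion for $\Lambda_n$, identifying $\tfrac1n R_n$ with the dot-rooted trees through Lemma \ref{lem:RwithTrees}, and regrouping the iterated brackets by the square-rooted tree they produce so that the coefficients sum to the defining formula for $e(\tau)$. The regrouping bijection $(\sigma_1,\dots,\sigma_i)\leftrightarrow(\tau,j)$ that you flag as the main obstacle is exactly the step the paper carries out with its sets $\widetilde\binaryPlanarTrees_{n;\geq i}$, so nothing is missing.
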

\begin{proof}
 Define
 \begin{align*}
   \tau\in\widetilde\binaryPlanarTrees_{n;\geq i} 
   &:=
   \{ \tau \in \widetilde\binaryPlanarTrees_{n} \mid \ell(\tau) \ge i \},\\
   \boldsymbol{\lbrack} x_1,\ldots, x_n\boldsymbol{\rbrack}&:=[x_1,[\ldots,[x_{n-1},x_n]\ldots],\quad\boldsymbol{\lbrack} x_1,x_2\boldsymbol{\rbrack}:=[x_1,x_2], \quad \boldsymbol{\lbrack} x\boldsymbol{\rbrack}:=x.
   \end{align*}
 Due to $e(\word{i})=c(\word{i})=1$ for all letters $\word{i}$, we have 
 \begin{equation*}
  \Lambda_1=R_1=\sum_{\word{i}=\word{1}}^{\word{d}}\word{i}\otimes\word{i}=\sum_{\tau\in\binaryPlanarTrees_1}e(\tau)\widetilde\areatree(\word{i})\otimes\brackettree(\word{i}),
 \end{equation*}
 and then via induction over $n$
 \begin{align*}
  &\Lambda_n=\frac{1}{n}R_n-\frac{1}{n}\sum_{i=2}^n\frac{1}{i!}\sum_{\substack{n_1,\ldots,n_i\\n_1+\cdots+n_i=n}}n_i\,\leftmultilie \Lambda_{n_1},\ldots,\Lambda_{n_i}\rightmultilie\\
  &=\sum_{\tau\in\binaryPlanarTrees_n}\frac{1}{nc(\tau)}\areatree(\tau)\otimes\brackettree(\tau)\\
  &\quad-\frac{1}{n}\sum_{i=2}^n\frac{1}{i!}\sum_{\substack{n_1,\ldots,n_i\\n_1+\cdots+n_i=n}}\sum_{\substack{\tau_1,\ldots \tau_i\\\tau_j\in\widetilde\binaryPlanarTrees_{n_j}}}\leaves{\tau_i}\prod_{j=1}^{i}e(\tau_j)
  \widetilde\areatree(\tau_1)\shuffle\dots\shuffle\widetilde\areatree(\tau_i)\otimes\boldsymbol{\lbrack}\brackettree(\tau_1),\ldots,\brackettree(\tau_i)\boldsymbol{\rbrack}\\
  &=\sum_{\tau\in\binaryPlanarTrees_n}e(\tau)\,\areatree(\tau)\otimes\brackettree(\tau)\\
  &\quad-\frac{1}{n}\sum_{i=2}^n\frac{1}{i!}\sum_{\tau\in\widetilde\binaryPlanarTrees_{n;\geq i}}\leaves{\tau^{(\geq i)}} e(\tau^{(\geq i)})\prod_{j=1}^{i-1} e(\tau^{(j)})\,\widetilde\areatree(\tau)\otimes\brackettree(\tau)\\
  &=\sum_{\tau\in\binaryPlanarTrees_n}e(\tau)\,\areatree(\tau)\otimes\brackettree(\tau)-\sum_{\tau\in\widetilde\binaryPlanarTrees_n^{\geq 2}}\sum_{i=2}^{\ell(\tau)}\frac{\leaves{\tau^{(\geq i)}} e(\tau^{(\geq i)})}{i! \leaves{\tau}}\prod_{j=1}^{i-1}e(\tau^{(k)})\,\widetilde\areatree(\tau)\otimes\brackettree(\tau)\\
  &=\sum_{\tau\in\widetilde\binaryPlanarTrees_n}e(\tau)\,\widetilde\areatree(\tau)\otimes\brackettree(\tau).
  \qedhere
 \end{align*}
\end{proof}

\begin{remark}%\todonotes{Rosa: Is $\quadsymlie$ a proper subspace of $\quadprelie$ or do they agree?
\NEXTPAPER{Rosa: Is there a $\zeta_h$ such that $\zeta_h\otimes P_h\notin\quaddend$?}

  Recall, from \Cref{cor:Rrecursion},
  \begin{align*}
    \quadsym = \gen{\word{i}\otimes\word{i},\,\word{i}=\word{1}\ldots\word{d}}{\leftPrelieSym}.
  \end{align*}
  Define
  \begin{align*}
    \quadprelie &:=\gen{\word{i}\otimes\word{i},\,\word{i}=\word{1}\ldots\word{d}}{\leftPrelie} \\
    \quaddend &:=\gen{\word{i}\otimes\word{i},\,\word{i}=\word{1}\ldots\word{d}}{\succeq,\preceq}.
  \end{align*}

 Then, we have $S_n,R_n,\Lambda_n\in\quaddend$, and the chain of inclusions
 \begin{equation*}
  \quadsym\subsetneq\quadsymlie
  \subseteq\quadprelie
  \subsetneq\quaddend.
  \end{equation*}

  Indeed, the mere inclusions are clear since $\leftPrelieSym$ and $[\cdot,\cdot]_{\shuffleConcatSymbol}$ are symmetrization and antisymmetrization of $\leftPrelie$, and $\leftPrelie$ itself is defined as a combination of $\succeq$ and $\preceq$. Regarding the strictness of two of the inclusions, on the one hand for any $d\geq 2$, the only anagram axis of $\word{12}\otimes\word{12}$ contained in $\quadsym$ is spanned by
  \begin{equation*}
   (\word{1}\otimes\word{1})\leftPrelieSym(\word{2}\otimes\word{2})=(\word{2}\otimes\word{2})\leftPrelieSym(\word{1}\otimes\word{1})=\area(\word{1},\word{2})\otimes[\word{1},\word{2}]=(\word{12}-\word{21})\otimes(\word{12}-\word{21}),
  \end{equation*}
  and thus the $\quadsymlie$ element
  \begin{equation*}
   [\word{1}\otimes\word{1},\word{2}\otimes\word{2}]_{\shuffleConcatSymbol}=(\word{1}\shuffle\word{2})\otimes[\word{1},\word{2}]=(\word{12}+\word{21})\otimes(\word{12}-\word{21})
  \end{equation*}
  is not contained in $\quadsym$. On the other hand, the anagram space of $\word{12}\otimes\word{12}$ in $\quadprelie$ is spanned by the two vectors
  \begin{align*}
   (\word{1}\otimes\word{1})\leftPrelie(\word{2}\otimes\word{2})&=(\word{1}\hs\word{2})\otimes[\word{1},\word{2}]=\word{12}\otimes(\word{12}-\word{21}),\\
   (\word{2}\otimes\word{2})\leftPrelie(\word{1}\otimes\word{1})&=(\word{2}\hs\word{1})\otimes[\word{2},\word{1}]=-\word{21}\otimes(\word{12}-\word{21}),
  \end{align*}
  and is thus easily seen to not contain the $\quaddend$ element
  \begin{equation*}
   (\word{1}\otimes\word{1})\succeq(\word{2}\otimes\word{2})=(\word{1}\hs\word{2})\otimes(\word{1}\conc\word{2})=\word{12}\otimes\word{12}.
  \end{equation*}
  However, it remains an open problem whether $\quadsymlie$ and $\quadprelie$ conincide.
  
%   Furthermore, $A_h\otimes h, A_h\otimes P_h, A_w\otimes w, A_w\otimes P_w\in\quaddend$ for all Hall sets $H$, all $h\in H$, all words $w$. Here $A_h$ is the area bracketing corresponding to the $H$ bracketing of $h$, $A_w$ is the shuffle-area bracketing corresponding to the writing of $w$ as a decreasing sequence of $H$ words and $(P_w)_w$ is the PBW basis corresponding to $H$. 
  Finally, we note the inclusion $\quaddend\subseteq\anagramdend$, where $(\anagramdend,\succeq,\preceq)$ is the dendriform algebra with linear basis given by all $w\otimes v$ such that $w$ is a word and $v$ is an anagram of $w$, and leave as a further question for future work whether $\quaddend$ and $\anagramdend$ actually conincide.
\end{remark}
%\todonotes{ Rocha: "there are explicit formulas for all scalars cT in the proof of the
%theorem, however, because of the high complexity of their definitions the author
%choose not to give them here". We should do better. }

%\begin{remark}
%  Going from the abstract formulation \eqref{eq:equivalentTo}
%  back to the concrete setting of \eqref{eq:coordinatesOfFirstKind},
%  this means that for every grouplike element $g$
%  \begin{align*}
%    g = \exp\left( \sum_i \langle \zeta_i, g \rangle\ L_i \right).
%  \end{align*}
%\end{remark}

Since the expansion in this theorem is not in terms of a \emph{basis} of the Lie algebra,
these are not yet coordinates of the first kind.
But, by a straightforward projection procedure we get
\begin{corollary}
  \label{cor:coordinatesOfFirstKind}\todonotes{RP: example}
  \begin{align*}
    S = \exp_\shuffleConcatSymbol \left( \Lambda \right),
  \end{align*}
  with
  \begin{align*}
    \Lambda = \sum_h \zeta_h \otimes P_h,
  \end{align*}
  where $h$ runs over Hall words,
  $P_h$ are the corresponding Lie Hall basis elements,
  and the
  $\zeta_h$ are expressed as linear combinations of shuffles of areas-of-areas,
  \begin{equation*}
   \zeta_h=\sum_{\substack{\tau \in \widetilde\binaryPlanarTrees_{|h|},\\ \text{foliage of }\tau\in\anagrams(h)}}e(\tau)
   \Big\langle S_h,\brackettree(\tau)\Big\rangle\,\widetilde\areatree(\tau).
  \end{equation*}

\end{corollary}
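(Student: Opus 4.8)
The plan is to read off the coordinates $\zeta_h$ from the two descriptions of the primitive element $\Lambda$ now at our disposal. First, combining \eqref{eq:equivalentTo}, which reads $S=\exp_{\shuffleConcat}\bigl(\sum_h\zeta_h\otimes P_h\bigr)$, with the definition $\Lambda:=\log_{\shuffleConcat}S$ from Remark \ref{rem:mathcalR} and the fact that $\exp_{\shuffleConcat}$ and $\log_{\shuffleConcat}$ are mutually inverse bijections between $\lift{\primitive}$ and $\lift{\grouplike}$, we obtain at once the first two displayed assertions, namely $S=\exp_{\shuffleConcat}(\Lambda)$ and $\Lambda=\sum_h\zeta_h\otimes P_h$.

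To extract $\zeta_h$ from this, I would apply the map $\coeval^{S_h}$, which pairs the right-hand tensor factor against $S_h$. Since $(S_h)_h$ and $(P_h)_h$ are dual bases of $\TS$ and $\primitive$ respectively, i.e.\ $\langle S_h,P_{h'}\rangle=\delta_{h,h'}$, linearity gives
\begin{equation*}
  \coeval^{S_h}(\Lambda)=\sum_{h'}\langle P_{h'},S_h\rangle\,\zeta_{h'}=\zeta_h .
\end{equation*}
On the other hand, $S_h$ is homogeneous of degree $|h|$, so only the level-$|h|$ component $\Lambda_{|h|}$ survives under $\coeval^{S_h}$; substituting the tree expansion of $\Lambda_{|h|}$ furnished by Theorem \ref{thm:firstExpressionForLambda} and using the symmetry of the pairing on words yields
\begin{equation*}
  \zeta_h=\coeval^{S_h}(\Lambda_{|h|})=\sum_{\tau\in\widetilde\binaryPlanarTrees_{|h|}}e(\tau)\,\bigl\langle S_h,\brackettree(\tau)\bigr\rangle\,\widetilde\areatree(\tau).
\end{equation*}

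It then remains only to narrow the index set of the sum. Here the single genuinely new observation is that the coefficient $\langle S_h,\brackettree(\tau)\rangle$ vanishes unless the foliage of $\tau$ lies in $\anagrams(h)$: indeed $S_h$, being dual to the Lie element $P_h$, is supported on words that are anagrams of $h$, while $\brackettree(\tau)$ expands into a linear combination of permutations of the leaf word of $\tau$, so the two can pair nontrivially only when the multiset of leaf labels of $\tau$ agrees with that of $h$. Dropping the identically vanishing terms gives precisely the claimed formula. I expect this final support/anagram bookkeeping to be the only point requiring genuine care, the rest being a direct combination of \eqref{eq:equivalentTo} and Theorem \ref{thm:firstExpressionForLambda}.
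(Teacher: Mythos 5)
Your proposal is correct and follows essentially the same route as the paper: the paper expands $\brackettree(\tau)=\sum_h\langle S_h,\brackettree(\tau)\rangle P_h$ inside the tree formula of Theorem \ref{thm:firstExpressionForLambda} and reads off the coefficient of $P_h$, which is exactly the computation you perform by applying $\coeval^{S_h}$ to both expressions for $\Lambda$. The concluding anagram-support observation is the same in both arguments.
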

% \todonotes{ before submission: put example up to level $4$ }
\begin{remark}%\todonotes{Rosa: actually with the expression for $\zeta_h$ it is kind of explicit, it's just more computationally complex (?) than just computing $\pi_1^\top S_h$}
  Again, this result is not satisfying because the $\zeta_h$ are expensive to calculate due to the large number of summands, which are not even linearly independent.
  We mention it only for completeness.
\end{remark}
\begin{proof}[Proof of Corollary \ref{cor:coordinatesOfFirstKind}]
  Let $P_h$ be Lie basis and $S_h$ its dual basis.
  Then
  \begin{align*}
    \Lambda
    &= \sum_h \sum_{\tau \in \widetilde\binaryPlanarTrees_n} e(\tau)\ \widetilde\areatree(\tau) \langle S_h, \brackettree(\tau) \rangle \otimes P_h \\
    &=: \sum_h \zeta_h \otimes P_h,
  \end{align*}
  where $S_h$ can be expressed as an element of $T(\R^d)$ which is a linear combination of anagrams of $h$, thus $\langle S_h,\brackettree(\tau)\rangle=0$ if the foliage of $\tau$ is not an anagram of $h$.
\end{proof}

\section{Shuffle generators}
\label{sec:shuffleGenerators}

For a countable index set $I$ consider the free commutative algebra $\R[ x_i : i \in I]$
over the indeterminates $x_i, i \in I$
(\cite[Definition 1.2.12]{bib:Row1988}). If $V$ is a vector space with a countable basis,
we also write $\R[ V ]$ for $\R[ x_i : i \in I ]$ where $I$ is some basis of $V$.
A commutative algebra $\mathcal A$ is \textbf{generated} by some elements $z_i \in \mathcal A$, $i \in I$,
if the commutative algebra morphism
\begin{align*}
  \R[ x_i : i \in I ] \to \mathcal A,
\end{align*}
extended from $x_i \mapsto z_i$, is surjective. If it is also injective, the algebra is \textbf{freely generated}
by the elements $z_i$.
%The shuffle algebra $\TS$ over $\ds$ letters is the vector space spanned by words in the letters $\word1,\dots,\word\ds$
%with the commutative shuffle product.%
%%
%It is a \emph{free commutative algebra} over the Lyndon words.
%That is, let $x_w$, where $w$ ranges over all Lyndon words, be commuting variables.
%Then the map
%\begin{align*}
%  \R[ x_w : \text{ $w$ is Lyndon} ] \to \TS,
%\end{align*}
%defined on the commuting variables as $x_w \mapsto w$ extends to an isomorphism of commutative algebras.
%%
The goal of this section is to find a simple condition
on a countable family $z_i \in \TS, i \in I$, to be (freely) generating.

Before stating the general results, let us begin with the example of the image of $\rho$.

\begin{proposition}
 Any basis for the image of $\Im\rho$ is generating. More explicitly, for any non-empty word $w$, we have
 \begin{equation}\label{eq:wordsasrhoshuffles}
  w=\sum_{\substack{w_1,\ldots,w_n\\ w_1\cdots w_n=w}}\frac{1}{k_{|w_1|,\ldots,|w_n|}}\,\rho(w_1)\shuffle\cdots\shuffle\rho(w_n),
 \end{equation}
 where $k_{m_1,\ldots,m_n}=(m_1+\cdots+m_n)k_{m_2,\ldots,m_n}$, with $k_{m}=m$.
\end{proposition}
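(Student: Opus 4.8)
The plan is to prove the expansion \eqref{eq:wordsasrhoshuffles} by induction on the length $|w|$, using the identity \eqref{eq:thm112} as the single driving tool, and then to deduce the generating statement as a formal consequence. First I would record the only fact about the coefficients that the argument needs: unwinding the recursion $k_{m_1,\ldots,m_n}=(m_1+\cdots+m_n)k_{m_2,\ldots,m_n}$ gives the closed form $k_{m_1,\ldots,m_n}=\prod_{j=1}^n(m_j+m_{j+1}+\cdots+m_n)$, but in fact all I use is that whenever $m_1+\cdots+m_n=|w|$ one has $k_{m_1,\ldots,m_n}=|w|\,k_{m_2,\ldots,m_n}$.

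For the base case $|w|=1$ the word is a single letter $\word i$, the only factorization is the trivial one, $\rho(\word i)=\word i$, and the coefficient is $1/k_{|w|}=1/1=1$, so both sides equal $w$. For the inductive step, recall that $Dw=|w|\,w$ and rewrite \eqref{eq:thm112}, separating the term $u=w$ (for which $v=\emptyWord$ and $\rho(w)\shuffle\emptyWord=\rho(w)$) from the rest:
\begin{align*}
  |w|\,w = \rho(w) + \sum_{\substack{uv=w\\ |u|,|v|\ge 1}} \rho(u)\shuffle v.
\end{align*}
Dividing by $|w|$ and substituting the inductive hypothesis for each proper suffix $v$ (which has length $<|w|$) turns the right-hand side into $\tfrac{1}{|w|}\rho(w)$ plus a sum of shuffle-products $\rho(u)\shuffle\rho(v_1)\shuffle\cdots\shuffle\rho(v_k)$.

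The heart of the argument is then the coefficient bookkeeping. The factorizations of $w$ into $n\ge 2$ nonempty pieces are in bijection with pairs consisting of a nonempty prefix $u=w_1$ together with a factorization $v_1\cdots v_k=v$ of the complementary suffix $v=w_2\cdots w_n$; under this bijection the term $\rho(w_1)\shuffle\cdots\shuffle\rho(w_n)$ acquires coefficient $\tfrac{1}{|w|}\cdot\tfrac{1}{k_{|w_2|,\ldots,|w_n|}}$, which by the observation above (with $|w_1|+\cdots+|w_n|=|w|$) equals exactly $1/k_{|w_1|,\ldots,|w_n|}$. The remaining $n=1$ term $\rho(w)$ already carries the correct coefficient $1/k_{|w|}=1/|w|$. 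This matches \eqref{eq:wordsasrhoshuffles} term by term and closes the induction.

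Finally, the generating claim follows formally: \eqref{eq:wordsasrhoshuffles} exhibits every word, and hence by linearity every element of $\TS$, as a shuffle polynomial in the elements $\rho(u)$; since each $\rho(u)$ is a linear combination of the elements of any fixed basis of $\Im\rho$, the subalgebra generated by such a basis contains all words and is therefore all of $\TS$. I expect no genuine obstacle here — the only place demanding care is checking that the bijection on factorizations is clean (each factorization into $\ge 2$ parts is produced exactly once) and that the telescoping of $k$ is aligned with stripping off the leftmost factor $w_1$ rather than the rightmost, which is precisely the side on which both \eqref{eq:thm112} and the recursion defining $k$ operate.
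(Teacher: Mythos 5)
Your proof is correct and follows essentially the same route as the paper's: induction on $|w|$ driven by the identity $Dw=\sum_{uv=w}\rho(u)\shuffle v$, with the telescoping $k_{m_1,\ldots,m_n}=|w|\,k_{m_2,\ldots,m_n}$ absorbing the factor $|w|$; the only cosmetic difference is that you separate the $u=w$ term explicitly, whereas the paper keeps it inside the sum as the trivial factorization. The concluding deduction of the generating property is also fine.
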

\begin{proof}
 For any letter $\word{i}$, we have $\word{i}=\rho(\word{i})$ in accordance with Equation \eqref{eq:wordsasrhoshuffles}. Assume the equation holds for all non-empty words $v$ with $|v|\leq\ell$ for some $\ell\geq 1$, and let $w$ be a word with $|w|=\ell+1$. Then, by Equation \eqref{eq:thm112} we have
 \begin{align*}
  |w|w&=Dw=\sum_{uv=w}\rho(u)\shuffle v=\sum_{uv=w}\rho(u)\shuffle \sum_{\substack{v_1,\ldots,v_n\\v_1\cdots v_n=v}}\frac{1}{k_{|v_1|,\ldots,|v_n|}}\,\rho(v_1)\shuffle\ldots\shuffle\rho(v_n)\\
  &=\sum_{\substack{w_1,\ldots,w_n\\w_1\cdots w_n=w}}\frac{1}{k_{|w_2|,\ldots,|w_n|}}\,\rho(w_1)\shuffle\ldots\shuffle\rho(w_n),
 \end{align*}
 again in accordance with Equation \eqref{eq:wordsasrhoshuffles}. 
 
 Note that in order for the induction to work, we made use again of the fact that $\rho(\emptyWord)=0$, so we only sum over non-empty words.
\end{proof}

%Put differently, it can be viewed a polynomial algebra in variables
%$x_w$, where $w$ ranges over all Lyndon words, that is, as commutative algebras,
%\begin{align*}
%  \R[ x_w : \text{ $w$ is Lyndon} ]
%  \cong
%  \TS,
%\end{align*}
%The isomorphism is given by
%\begin{align*}
%  x_w \mapsto w \in \TS.
%\end{align*}

\begin{lemma}
  \label{lem:lieAlgebra}
  For each $n \ge 1$, let $X_n \subset T_n(\R^d)$ be a subset of the shuffle algebra at level $n$.
  Let $X \coloneqq \bigcup_{n\ge 1} X_n$.
  Then:
  \begin{center}
    %$\forall\ 0 \not= L \in \mathfrak g$ there is $x \in X$ with $\langle x, L \rangle \not= 0$ \\
    For all $n\ge 1$, for all nonzero $L \in \mathfrak g_n$ there is an $x \in X_n$
    such that $\langle x, L \rangle \not= 0$\\
    \vspace{0.5em}
    \emph{if and only if} \\
    \vspace{0.5em}
    $X$ generates the shuffle algebra $T(\R^d)$.
  \end{center}

  If moreover $|X_n| = \dim \mathfrak g_n$, $n\ge 1$, then $X$ is \emph{freely} generating.
\end{lemma}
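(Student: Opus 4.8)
The plan is to combine a graded generation criterion with the orthogonality (annihilator) characterization of the free Lie algebra. Throughout, write $D_n := \sum_{i+j=n,\, i,j\ge 1} T_i(\R^d)\shuffle T_j(\R^d) \subseteq T_n(\R^d)$ for the space of \emph{proper shuffles} in degree $n$ (the decomposables). The one external ingredient I would invoke, from \cite{bib:Reu1993}, is the identity $\mathfrak g_n^{\bot} = D_n$ inside $T_n(\R^d)$: a degree-$n$ element of the shuffle algebra annihilates every Lie element precisely when it is a sum of proper shuffles. One inclusion is the elementary computation $\langle u\shuffle v, L\rangle = \langle u\otimes v, \deshuffle L\rangle = 0$ for $u,v$ of positive degree and $L$ primitive (since $\deshuffle L = L\otimes\emptyWord + \emptyWord\otimes L$), and equality follows from the dimension count $\dim D_n = \dim T_n(\R^d) - \dim\mathfrak g_n$, which is exactly the statement that $\TS$ is the polynomial algebra on the $\dim\mathfrak g_n$ Lyndon words of length $n$.

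First I would reduce generation to spanning modulo decomposables: I claim $X$ generates $\TS$ if and only if $T_n(\R^d) = \spann(X_n) + D_n$ for every $n\ge 1$. This is a short graded induction. Writing $A$ for the subalgebra generated by $X$, the degree-$n$ part $A_n$ equals $\spann(X_n)$ together with shuffle products of two or more lower-degree generators, and every such product lies in $D_n$; conversely, once all degrees below $n$ are generated one has $D_n = \sum_{i+j=n} A_i\shuffle A_j \subseteq A_n$ (base case $D_1 = 0$, so $A_1 = \spann(X_1)$). Hence $A_n = T_n(\R^d)$ for all $n$ iff $T_n(\R^d)=\spann(X_n)+D_n$ for all $n$.

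Next I would pass to duals. Since the pairing between $T_n(\R^d)$ and $T_n((\R^d))$ is nondegenerate and both spaces are finite-dimensional, $T_n(\R^d) = \spann(X_n) + D_n$ holds if and only if $\big(\spann(X_n)+D_n\big)^{\bot} = \spann(X_n)^{\bot}\cap D_n^{\bot} = 0$. Using $D_n^{\bot} = (\mathfrak g_n^{\bot})^{\bot} = \mathfrak g_n$, this says exactly that no nonzero $L\in\mathfrak g_n$ is annihilated by all of $X_n$, i.e. for every nonzero $L\in\mathfrak g_n$ there is $x\in X_n$ with $\langle x, L\rangle\neq 0$. Combined with the previous paragraph, this is the asserted biconditional.

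Finally, for the freeness addendum, the quotient $T_n(\R^d)/D_n$ has dimension $\dim\mathfrak g_n$, so under the generating hypothesis the images of $X_n$ span it, and the extra assumption $|X_n| = \dim\mathfrak g_n$ upgrades this to a basis. I would then consider the graded commutative-algebra surjection $\R[\,x : x\in X\,]\to\TS$. Its target is the polynomial algebra on the $\dim\mathfrak g_n$ Lyndon words in each degree, so source and target have the same finite dimension in every degree; a degree-preserving surjection between graded vector spaces with equal finite-dimensional components is an isomorphism, hence also injective, so $X$ freely generates. The only genuinely nontrivial input is the annihilator identity $\mathfrak g_n^{\bot} = D_n$; everything else is finite-dimensional duality, the graded induction, and a dimension count. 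Avoiding that citation would mean reproving it, which is essentially the Milnor–Moore/PBW duality sketched in Remark~\ref{rem:milnorMoore}.
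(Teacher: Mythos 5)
Your proposal is correct and follows essentially the same route as the paper's proof: both identify the degree-$n$ decomposables (your $D_n$, the paper's $\shuffleFromBelow_n$) with the annihilator of $\mathfrak g_n$ via Reutenauer's Theorem 3.1(iv) plus a Lyndon-word dimension count, reduce generation to $T_n(\R^d)=\spann(X_n)+D_n$ by a graded induction, dualize using $(W_1+W_2)^\bot=W_1^\bot\cap W_2^\bot$, and obtain freeness by comparing graded dimensions with the Lyndon polynomial algebra. No substantive differences to report.
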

\begin{remark}
  \label{rem:milnorMoore}
  % OTHER REFERECES ARE
  % Sternberg - Section 1.9
  % Serre - Thm 4.3
  This lemma can also be seen as a consequence of (the proof of) the Milnor-Moore theorem,
  see for example \cite[p.48]{bib:Car2007}.
  Let us sketch this.
  Let $\TSgradedDual$ be the graded dual of $\TS$,
  the subspace of $\TC$ consisting of only finite linear combinations of words.
  Endowed with the unshuffle coproduct, the dual of the shuffle product, this is a cocommutative, conilpotent coalgebra.
  Then, by (the proof of) \cite[Theorem 3.8.1]{bib:Car2007},
  there exists an isomorphism of cocommutative coalgebras
  \begin{align*}
    e_{\TSgradedDual}: \symmetricTensors[ \primitive ] \to \TSgradedDual.
  \end{align*}
  Here $\symmetricTensors[ \primitive ] \subset T( \primitive )$ are the symmetric tensors over $\primitive$,
  generated, as a vector space, by the elements $\underbrace{v \otimes \dots \otimes v}_{n \text{ times}}$, $v \in \mathfrak g$, $n\ge 0$, and endowed with the deconcatenation coproduct.
  The map $e_\TSgradedDual$ acts on these elements as
  \begin{align*}
    e_\TSgradedDual\left( \underbrace{v \otimes \dots \otimes v}_{n \text{ times}} \right) = \frac{v^n}{n!},
  \end{align*}
  where the $n$-th power on the right hand side is taken with respect to the concatenation product (under which $\TSgradedDual$ is closed).
  The grading on $\TSgradedDual$ induces a grading on $\symmetricTensors[\primitive]$ via the isomorphism $e_\TSgradedDual$.
  %%
  % \Gamma = \oplus_p \Gamma^p
  % \Gamma^p := \Gamma \cup (\oplus_{r\in \N} \oplus_{i_1+..+i_r=p} \primitive_{i_1} \otimes \dots \primitive_{i_r}
  %%
  The graded dual (with respect to this induced grading) of $\symmetricTensors[\primitive]$ is then given by $\symmetricAlgebra[ \mathfrak g^\gradedDual ]$, i.e.~the symmetric algebra over $\primitive^\gradedDual$,
  where $\mathfrak g^\gradedDual$ is the graded dual of $\mathfrak g$.
  Since $e_\TSgradedDual$ is an isomorphism of cocommutative coalgebras, the dual map
  \begin{align*}
    e_\TSgradedDual^\gradedDual: \TS \to \symmetricAlgebra[ \mathfrak g^\gradedDual ],
  \end{align*}
  is an isomorphism of commutative algebras.
  $X$ (freely) generating $\TS$ is then equivalent to $e_\TSgradedDual^\gradedDual\left( X \right)$ (freely) generating $\R[ \mathfrak g^\gradedDual ]$,
  which is equivalent to our condition, using Lemma \ref{lem:triangularGenerating}.
  %{\color{orange}
  %which is equivalent to $\mathfrak g^\gradedDual \subset \spann_\R
  %e_B^\gradedDual\left( X \right)$, 
  %\todonotes{ use Lemma \ref{lem:triangularGenerating} }

  %{\color{orange} Similarly, $X$ freely generating $\TS$ is equivalent to
  %$e_B^\gradedDual X$  freely generating $\R[ \mathfrak g^\gradedDual ]$,
  %which is equivalent to $\mathfrak g^\gradedDual = \spann_\R e_B^\gradedDual\left( X \right)$.} \todonotes{Rosa: Is this immediate?}
\end{remark}
\newcommand\shuffleFromBelow{\mathsf{shuff}}
\begin{proof}
  We show for every level $N$:

  \begin{center}
    %$\dim \iota_n^*(X_n) = \dim \primitive_n$ for all $1 \le n \le N$ \\
    $\forall n \le N$ $\forall\ 0 \not= L \in \mathfrak g_n$ there is $x \in X_n$ with $\langle x, L \rangle \not= 0$ \\
    \vspace{0.5em}
    \emph{if and only if} \\
    \vspace{0.5em}
    ${\bigcup}_{1 \le n\le N} X_n$ shuffle generates $T_{\le N}(\R^d)$.
  \end{center}

  It is clearly true for $N=1$.
  Let it be true for some $N$.
  We show it for $N+1$.

  Let $\shuffleFromBelow_{N+1} \subset T_{N+1}(\R^d)$   \index{shuff@$\shuffleFromBelow$}
  denote the linear space of shuffles of everything ``from below'', i.e.
  \begin{align*}
    \shuffleFromBelow_{N+1} := \bigcup_{n=1}^{N} \left\{ T_n(\R^d) \shuffle T_{N-n}(\R^d) \right\}.
  \end{align*}
  By \cite[Theorem 3.1 (iv)]{bib:Reu1993}
  \begin{align*}
    \Big\langle \shuffleFromBelow_{N+1}, L \Big\rangle = 0,
  \end{align*}
  for all $L \in \primitive_{N+1}$.
  In other words, $\shuffleFromBelow_{N+1}$ is contained in the annihilator of $\mathfrak{g}_{N+1}$.
  By \cite[Theorem 6.1]{bib:Reu1993},
  the shuffle algebra is freely generated by the Lyndon words in $\word1, \dots, \word{d}$,
  which have dimension $\dim \primitive_n$ on level $n$.
  Hence
  \begin{align*}
    \dim \shuffleFromBelow_{N+1} = \dim T_{N+1}(\R^d) - \dim \primitive_{N+1}.
  \end{align*}
  By dimension counting we hence have that 
  $\shuffleFromBelow_{N+1}$ must actually be \emph{equal} to the annihilator of $\mathfrak{g}_{N+1}$.
  Then, a fortiori, $\mathfrak{g}_{N+1}$ is the annihilator of $\shuffleFromBelow_{N+1}$.

  By Lemma \ref{lem:obvious},
  \begin{center}
    $T_{N+1}(\R^d) = \shuffleFromBelow_{N+1} + \spann_\R X_{N+1}$\\
    \vspace{0.5em}
    \emph{if and only if}\\
    \vspace{0.5em}
    $\forall\ 0 \not= L \in \mathfrak g_{N+1}$ there is $x \in \spann_\R X_{N+1}$ with $\langle x, L \rangle \not= 0$.
  \end{center}
  But this is the case if and only if $\forall\ 0 \not= L \in \mathfrak g_{N+1}$ there is $x \in X_{N+1}$ with $\langle x, L \rangle \not= 0$.
  This finishes the proof regarding the generating property.

  Regarding freeness:
  %\todonotes{ before submission: extract to lemma? }
  denote $\iota: \R[ x_v: v \in X ] \to T(\R^d)$ the
  extension, as a commutative algebra morphism, of the map $x_v \mapsto v$.
  Denote $\iota_{Lyndon}: \R[ y_w : w \in L ] \to T(\R^d)$ the
  extension, as a commutative algebra morphism, of the map $y_w \mapsto w$,
  where $L$ are the Lyndon words. By \cite[Theorem 6.1]{bib:Reu1993}, $\iota_{Lyndon}$ is an isomorphism.
  By what we have shown so far, $\iota$ is surjective.
  Since $X$ consists of homogeneous elements, we can grade $\R[ x_v: v \in X ]$
  induced from the grading of $T(\R^d)$
  and analogously for $\R[ y_w : w \in L ]$.
  By assumption, the graded dimensions match.
  Hence, there is an isomorphism of graded, commutative algebras
  \begin{align*}
    \Phi: \R[ x_v : v \in X ] \to \R[ y_w : w \in L ].
  \end{align*}
  Since $\iota_{Lyndon}$ is an isomorphism of graded, commutative algebras
  and $\iota$ is epimorphism of graded, commutative algebras (where each homogeneous subspace is finite dimensional!)
  we must have that $\iota$ is in fact an isomorphism.
\end{proof}
We used the following simple lemma.
\begin{lemma}
  \label{lem:obvious}
  Let $V$ be a finite dimensional vector space with dual $W:=V^*$.
  We denote the pairing by $\langle w, v\rangle$, for $w \in W, v \in V$.
  Let $W_1, W_2$ be subspaces of $W$ and
  let %$V_1 \subset V$
  \begin{align*}
    W_1^\bot :=  \{ v \in V : \langle w_1, v \rangle = 0\ \forall w_1 \in W_1 \},
  \end{align*}
  be the annihilator of $W_1$.
  %Let $W_1$ be a subspace of $W$ with annihilator $V_1 \subset V$.
  %Let $W_2$ be a subspace of $W$.
  Then: 
  \begin{center}
  $\forall\ 0 \not= v_1 \in W_1^\bot$ there is $w_2 \in W_2$ with $\langle w_2, v_1 \rangle \not= 0$\\
    \vspace{0.5em}
  \emph{if and only if}\\
    \vspace{0.5em}
  $W_1 + W_2 = W$.
  \end{center}
\end{lemma}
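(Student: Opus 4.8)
The plan is to reduce the stated ``testing'' condition to an equality of annihilators and then invoke the standard finite-dimensional duality between subspaces of $W$ and their annihilators in $V$. Throughout I would also introduce the annihilator of $W_2$ inside $V$, namely $W_2^\bot := \{ v \in V : \langle w_2, v \rangle = 0 \text{ for all } w_2 \in W_2 \}$, mirroring the notation $W_1^\bot$ from the statement.

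First I would rewrite the left-hand condition. For a fixed $v_1$, the assertion ``there is $w_2 \in W_2$ with $\langle w_2, v_1 \rangle \neq 0$'' is exactly the negation of $v_1 \in W_2^\bot$. Hence the left-hand condition says that no nonzero $v_1$ lies in both $W_1^\bot$ and $W_2^\bot$, i.e.\ it is equivalent to $W_1^\bot \cap W_2^\bot = \{0\}$.

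Next I would apply the elementary identity $(W_1 + W_2)^\bot = W_1^\bot \cap W_2^\bot$, which holds because a vector of $V$ annihilates every element of $W_1 + W_2$ if and only if it annihilates $W_1$ and $W_2$ separately. Thus the left-hand condition becomes $(W_1 + W_2)^\bot = \{0\}$. Finally, since $V$ is finite dimensional and $W = V^*$, the pairing is nondegenerate and for any subspace $U \subseteq W$ one has $\dim U^\bot = \dim V - \dim U$; in particular $U^\bot = \{0\}$ iff $\dim U = \dim W$, which for a subspace $U \subseteq W$ means $U = W$. Taking $U = W_1 + W_2$ yields $(W_1 + W_2)^\bot = \{0\}$ if and only if $W_1 + W_2 = W$, which closes the equivalence.

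I do not expect a genuine obstacle here: the whole content is routine finite-dimensional linear algebra, and each of the three steps is a one-line verification. The only point that deserves care is that finite dimensionality is used twice — once for the dimension formula $\dim U^\bot = \dim V - \dim U$ and once implicitly for the reflexivity of annihilators — so the argument does not transfer verbatim to an infinite-dimensional setting; this is exactly why the lemma is stated for finite-dimensional $V$, and it suffices for the application in the proof of Lemma~\ref{lem:lieAlgebra}, where it is invoked level by level on the finite-dimensional spaces $T_{N+1}(\R^d)$.
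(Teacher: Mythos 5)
Your proposal is correct and follows essentially the same route as the paper's proof: both reduce the testing condition to $W_1^\bot \cap W_2^\bot = \{0\}$, apply the identity $(W_1+W_2)^\bot = W_1^\bot \cap W_2^\bot$, and conclude via the finite-dimensional fact that a subspace of $W$ with trivial annihilator is all of $W$. You merely spell out the last step (the dimension count) that the paper leaves implicit.
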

\begin{proof}
  Recall the well-known identity (\cite[Exercise 17.8.c)]{bib:Hal2017})
  \begin{align*}
    (W_1 + W_2)^\bot = W_1^\bot \cap W_2^\bot. 
  \end{align*}
  Then
  \begin{align*}
    W_1 + W_2 = W \Leftrightarrow W_1^\bot \cap W_2^\bot = \{ 0 \},
  \end{align*}
  which is the claim.
\end{proof}

\begin{corollary}
  \label{cor:someShuffleGenerators}
  %\todonotes{Rosa: update proof for if and only if statement}
  %  \item 
  %    The Lyndon words in $\TS$.
  %Any homogeneous basis for the image of $\pi^\top$, where
  %$\pi$ is %as in Corollary \ref{cor:switching}.
  %any graded projection $\pi: \TC \to \mathfrak g \subset \TC$%
  %\footnote{Identifying $\mathfrak g$ as a subset of $\TC$.},
  %freely shuffle generates $\TS$.
  Let $X$ be a set of homogeneous elements of $T(\R^d)$. 
  Then, the following are equivalent:
  \begin{enumerate}[(i)]
  \item $X$ freely shuffle generates $T(\R^d)$,
  \item $X$ is a homogeneous realization of a  dual basis to a homogeneus basis of $\mathfrak{g}$,
  \item $X$ is a homogeneous basis for the image of a projection $\pi^\top$, where $\pi$ is a graded projection $\pi: \TC \to \mathfrak g \subset \TC$ \footnote{Identifying $\mathfrak g$ as a subset of $\TC$.}.
  \end{enumerate}

  Examples include:
  \begin{enumerate}
    \item $\pi := \pi_1$, the Eulerian idempotent \eqref{eq:eulerianIdempotent}
      \vspace{-0.2em}
      \begin{itemize}
        \item[$\leadsto$] Coordinates of the first kind.
      \end{itemize}
    \item A rescaling of the Dynkin map $r$ \eqref{eq:r} (to make it a projection)
      \vspace{-0.2em}
      \begin{itemize}
        \item[$\leadsto$] A basis for the image of $\rho$, for example $\Rho_h$ from \Cref{thm:Rho},
          which by Corollary \ref{cor:Rrecursion} can be expressed as areas-of-areas.
      \end{itemize}
    \item $\pi$ the orthogonal projection (with respect to the inner product in the ambient space $\TC$) onto $\mathfrak g$
      (the Garsia idempotent, \cite{bib:Duc1991})
      \vspace{-0.2em}
      \begin{itemize}
        \item[$\leadsto$] Any (homogeneous) basis for the Lie algebra $\mathfrak g \subset \TC$, identified as elements of $\TS$.
      \end{itemize}
%     \item 
%       Any realization of a basis dual to a homogeneous basis of the Lie algebra.
      \todonotes{JR: Include coordinates of the second kind; what's the span of $S_h$, Does it shuffle generate?  }
  \end{enumerate}
\end{corollary}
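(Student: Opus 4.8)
The plan is to reduce the entire statement to Lemma \ref{lem:lieAlgebra} together with elementary finite-dimensional duality, carried out level by level (each $T_n(\R^d)$ and $\primitive_n$ is finite-dimensional, so transposes and double-duals cause no trouble). I read conditions (ii) and (iii) as two concrete repackagings of the nondegeneracy hypothesis of Lemma \ref{lem:lieAlgebra} supplemented by the dimension count $|X_n|=\dim\primitive_n$, and I would prove the two equivalences (i)$\Leftrightarrow$(ii) and (ii)$\Leftrightarrow$(iii) separately.

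For (ii)$\Rightarrow$(i): if $X$ realizes a dual basis $\{x_j\}$ to a homogeneous basis $\{L_j\}$ of $\primitive$, so that $\langle x_j,L_k\rangle=\delta_{jk}$ with $x_j,L_j$ of equal degree, then at each level $|X_n|=\dim\primitive_n$, and for any nonzero $L=\sum_k c_kL_k\in\primitive_n$ some $c_k\neq 0$ gives $\langle x_k,L\rangle=c_k\neq 0$; both hypotheses of Lemma \ref{lem:lieAlgebra} hold, so $X$ freely generates. For the converse (i)$\Rightarrow$(ii): free generation implies generation, so Lemma \ref{lem:lieAlgebra} gives that the functionals $\{\langle x,\cdot\rangle:x\in X_n\}$ restricted to $\primitive_n$ span $\primitive_n^\ast$; moreover, reading off the graded indecomposables of the augmentation ideal $\TS_+$, namely $\TS_+/(\TS_+\shuffle\TS_+)$ — whose degree-$n$ part has dimension $\dim\primitive_n$, as witnessed by the Lyndon generators (\cite[Theorem 6.1]{bib:Reu1993}) — forces $|X_n|=\dim\primitive_n$. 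A spanning family of $\dim\primitive_n$ functionals on $\primitive_n$ is a basis of $\primitive_n^\ast$, and its dual basis is a homogeneous basis of $\primitive_n$ to which $X_n$ is exactly dual; this is (ii).

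For (ii)$\Leftrightarrow$(iii) I would pass between the dual-basis data and an explicit idempotent. Given (ii), define the graded map $\pi:\TC\to\primitive$ by $\pi(v):=\sum_j\langle x_j,v\rangle L_j$ on each level; then $\pi(L_k)=L_k$, so $\pi$ is a projection with image $\primitive$, and a direct transpose computation gives $\pi^\top\phi=\sum_j\langle\phi,L_j\rangle x_j$, whence $\im\pi^\top=\spann X$ with the $x_j$ linearly independent (pairing $\sum_j c_jx_j$ with $L_k$), so $X$ is a homogeneous basis of $\im\pi^\top$, giving (iii). Conversely, for any graded projection $\pi:\TC\to\primitive$ one has $\im\pi^\top=(\ker\pi)^\bot$, and since $T_n(\R^d)=\primitive_n\oplus\ker\pi_n$ the restriction map $(\ker\pi_n)^\bot\to\primitive_n^\ast$ is an isomorphism; thus a homogeneous basis $X$ of $\im\pi^\top$ restricts to a basis of $\primitive^\ast$, i.e.\ realizes a dual basis to a basis of $\primitive$, giving (ii).

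Finally I would check the three examples are genuine graded projections onto $\primitive$ so that (iii) applies. The Eulerian idempotent $\pi_1$ of \eqref{eq:eulerianIdempotent} projects $\TC$ onto $\primitive$ (\cite[Section 3.2]{bib:Reu1993}), and a basis of $\im\pi_1^\top$ is furnished by the coordinates of the first kind $\zeta_h=\pi_1^\top S_h$. The rescaled Dynkin operator $v\mapsto \tfrac{1}{|v|}r(v)$ is a projection onto $\primitive$, because $r=|\cdot|\,\id$ on each $\primitive_n$ and $\im r=\primitive$; since $\langle\rho(u),w\rangle=\langle u,r(w)\rangle$ identifies $\rho=r^\top$ (the level-wise rescaling not affecting the image of the transpose), $\im\rho$ is exactly the image of this projection's transpose, and any basis of it — e.g.\ the $\Rho_h$ of Theorem \ref{thm:Rho} — freely generates and is expressible in areas-of-areas by Corollary \ref{cor:Rrecursion}. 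The Garsia idempotent is the orthogonal projection onto $\primitive$ for the inner product making words orthonormal, hence self-transpose, so $\im\pi^\top=\primitive$ viewed inside $\TS$, and any homogeneous basis of the Lie algebra generates. The only genuinely delicate point in the whole argument is the bookkeeping that upgrades \emph{generating} to \emph{freely generating}, i.e.\ the dimension identity $|X_n|=\dim\primitive_n$; everything else is the nondegeneracy of a finite-dimensional pairing.
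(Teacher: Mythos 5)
Your proposal is correct and follows essentially the same route as the paper: everything is reduced to Lemma \ref{lem:lieAlgebra} plus level-wise finite-dimensional duality between $\primitive_n$ and its annihilator. The only differences are organizational — you prove (i)$\Leftrightarrow$(ii) and (ii)$\Leftrightarrow$(iii) directly instead of the paper's cycle (iii)$\Rightarrow$(i)$\Rightarrow$(ii)$\Rightarrow$(iii), you give an explicit formula $\pi(v)=\sum_j\langle x_j,v\rangle L_j$ where the paper only prescribes $\ker\pi$, and you obtain $|X_n|=\dim\primitive_n$ from the graded indecomposables rather than the paper's minimality argument — none of which changes the substance.
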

\begin{remark}
  1.
  %Point 2 is shown in \cite[Section 6.1]{bib:Reu1993},
  Point 3. is shown in \cite[Section 6.5.1]{bib:Reu1993}.
  We include it here, as it falls nicely into the setting of Lemma \ref{lem:lieAlgebra}.

  2.
  Coordinates of the first kind must - by definition - contain all the information
  of the signature, so it is reasonable that they shuffle generate $\TS$.
  For the other sets this is not immediately evident.
  The basis for the Lie algebra is one such example
  and it does not even live in the correct space (formally, it is an element of the concatenation algebra $\TC$ not of the shuffle algebra $\TS$).

  %\item JD: h does't always shuffle generate (Reutenauer p152). This does not fit here.
\end{remark}
\begin{proof}
  %LYNDON WORDS, CUT OUT TO AVOID CIRCULAR ARGUMENT
  %By \cite[Lemma 5.3.2]{bib:Lot1997},
  %the standard bracketing corresponding to a Lyndon word $w$ is of the form
  %\begin{align*}
  %  w + \text{ larger words }.
  %\end{align*}
  %Hence the inner product of any Lyndon word with its standard bracketing is equal to $1$.
  %Since the standard bracketings of Lyndon words form a basis for $\mathfrak g$,
  %the claim follows from Lemma \ref{lem:lieAlgebra}.
  %\footnote{
  %To avoid a circular argument
  %(we used the fact that Lyndon words generate $\TS$ freely in our proof of
  %Lemma \ref{lem:lieAlgebra}), for this free generating set, one must instead use the proof sketched in Remark \ref{rem:milnorMoore}.}
  
  (iii)$\Rightarrow$(i): Assume first that we have given a graded projection $\pi$ with image $\mathfrak{g}$, and a homogeneous basis $(x_i)_i$ of $\Im\pi^\top$.
  Because of the grading it makes sense to speak of the component
  %$\pi_n: T_n((\R^d)) \to \mathfrak g_n$.
  $\pi_n: T_n((\R^d)) \to T_n((\R^d))$.
  Then $\pi^\top_n: T_n(\R^d) \to T_n(\R^d)$ and
  \begin{align*}
    \im\left( \pi_n^\top \right)^\bot = \ker\left( \pi_n \right).
  \end{align*}
  Since $\pi_n$ itself is also a projection, we have that
  \begin{align*}
    T_n(\R^d) = \ker( \pi_n ) \oplus \im( \pi_n ).
  \end{align*}
  Hence, for every $L \in \im( \pi )$ there is $x \in \im\left( \pi^\top \right)$
  with $\langle x, L \rangle \not= 0$. Then Lemma \ref{lem:lieAlgebra} applies. \todonotes{JD/JR/RP: explain more}
  
  (i)$\Rightarrow$(ii): Let now $X$ be a homogeneous free shuffle generating set. 
  Then $\{\langle x,\cdot\rangle|x\in X_n\}$ spans the whole dual space of $\mathfrak{g}_n$. 
  Indeed, assume this is not the case, then a comparison with some $\R^n$ shows that there is a nonempty annihilator of $X_n$ inside $\mathfrak{g}_n$, but this contradicts the criterion from Lemma \ref{lem:lieAlgebra}. 
  Hence, $X$ does span the dual space of $\mathfrak{g}$, thus contains a dual basis to some basis of $\mathfrak{g}$, and since $X$ is freely generating, $X$ is actually that dual basis 
  (otherwise, a subset of $X$ would already generate, which contradicts the assumption that $X$ freely generates).
  
  %We remark only on point 4:
  (ii)$\Rightarrow$(iii): Let $P_h \in \TC, h \in H$, be some homogeneous basis for the Lie algebra.
  Let $D_h \in \TS, h \in H$ be a realization of a dual basis.
  That is
  \begin{align*}
    \Big\langle D_h, P_{h'} \Big\rangle = \delta_{h, h'}.
  \end{align*}
  Then choose $\pi$ such that $\ker \pi = \left( \operatorname{span}_\R \{ D_h : h \in H \} \right)^\top$.
  \todonotes{JD/RP/JR: explain more}
\end{proof}
\begin{proposition}
%\todonotes{Rosa: proof}
 If $X$ is a homogeneous set and $\pi: \TC \to \mathfrak g \subset \TC$ is a graded projection, then $\pi^\top X$ is a shuffle generating set if and only if $\pi^\top X$ spans $\Im\pi^\top$ if and only if $X$ is a shuffle generating set. If $X$ is freely generating, then so is $\pi^\top X$.
\end{proposition}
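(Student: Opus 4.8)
The plan is to reduce everything to the Lie-pairing criterion of Lemma \ref{lem:lieAlgebra} and to exploit the single identity
\begin{align*}
  \langle \pi^\top x, L \rangle = \langle x, \pi L \rangle = \langle x, L \rangle, \qquad x \in \TS,\ L \in \mathfrak{g},
\end{align*}
which holds because $\pi$ is a projection with image $\mathfrak{g}$, so $\pi L = L$ for every Lie element $L$. Since $\pi$ is graded, so is $\pi^\top$; hence $(\pi^\top X)_n = \pi^\top(X_n)$ and the identity is compatible with the level-by-level formulation of Lemma \ref{lem:lieAlgebra}.

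First I would establish the outer equivalence [$X$ generating] $\iff$ [$\pi^\top X$ generating]. By Lemma \ref{lem:lieAlgebra}, $X$ generates iff for every $n$ and every nonzero $L \in \mathfrak{g}_n$ there is $x \in X_n$ with $\langle x, L\rangle \neq 0$, while the criterion for $\pi^\top X$ asks for $y \in (\pi^\top X)_n$ with $\langle y, L \rangle \neq 0$. Writing $y = \pi^\top x$, the displayed identity gives $\langle \pi^\top x, L\rangle = \langle x, L\rangle$, so the two criteria coincide verbatim.

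Next I would treat [$\pi^\top X$ generating] $\iff$ [$\pi^\top X$ spans $\Im\pi^\top$]. The key structural input is that the dual pairing restricts to a \emph{perfect} pairing between $\Im\pi^\top_n$ and $\mathfrak{g}_n$. I would check this from the decompositions induced by the projection: $T_n((\R^d)) = \mathfrak{g}_n \oplus \ker\pi_n$ dualizes to $T_n(\R^d) = \Im\pi^\top_n \oplus \ker\pi^\top_n$ with $\ker\pi^\top_n = \mathfrak{g}_n^\bot$ and $\Im\pi^\top_n = (\ker\pi_n)^\bot$; non-degeneracy on both sides then follows from $\Im\pi^\top_n \cap \mathfrak{g}_n^\bot = 0$ and $\mathfrak{g}_n \cap \ker\pi_n = 0$. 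With this perfect pairing, the separation criterion — no nonzero $L\in\mathfrak{g}_n$ annihilates $(\pi^\top X)_n$ — is exactly the condition $\spann(\pi^\top X)_n = \Im\pi^\top_n$, by the usual dimension count for annihilators. Summing over $n$ yields the middle equivalence. (The direction ``spans $\Rightarrow$ generates'' can alternatively be read off Corollary \ref{cor:someShuffleGenerators}: a spanning set of $\Im\pi^\top$ contains a basis of $\Im\pi^\top$, which freely generates by the implication (iii)$\Rightarrow$(i).)

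Finally, for freeness: if $X$ is freely generating then $|X_n| = \dim\mathfrak{g}_n$, and $\pi^\top X$ generates by the first step. From the second step $\spann(\pi^\top X)_n = \Im\pi^\top_n$, a space of dimension $\dim\mathfrak{g}_n$, so $|(\pi^\top X)_n| \ge \dim\mathfrak{g}_n$; combined with $|(\pi^\top X)_n| \le |X_n| = \dim\mathfrak{g}_n$ this forces equality, the elements of $(\pi^\top X)_n$ are linearly independent, and the freeness clause of Lemma \ref{lem:lieAlgebra} applies. I do not expect a serious obstacle; the only point needing care is verifying that the pairing between $\Im\pi^\top_n$ and $\mathfrak{g}_n$ is perfect, since this is precisely what turns ``spanning $\Im\pi^\top$'' into the correct reformulation of the separation criterion.
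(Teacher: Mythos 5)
Your proposal is correct and follows essentially the same route as the paper: the identity $\langle \pi^\top x, L\rangle = \langle x, L\rangle$ for $L\in\mathfrak g$ reduces both generating criteria to the same condition via Lemma \ref{lem:lieAlgebra}, the middle equivalence comes from the fact that a subset of $\Im\pi^\top$ generates iff it spans (the paper cites Corollary \ref{cor:someShuffleGenerators}, you make the underlying perfect pairing between $\Im\pi^\top_n$ and $\mathfrak g_n$ explicit — a harmless elaboration), and freeness follows from the dimension count $|(\pi^\top X)_n|=\dim\mathfrak g_n$. No gaps.
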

\begin{proof}
 Since for any $x\in X$ and $p\in\mathfrak{g}$ we have
 \begin{equation*}
  \langle\pi^\top x,p\rangle=\langle x,\pi p\rangle=\langle x,p\rangle,
 \end{equation*}
 the condition for being a shuffle generating set in Lemma \ref{lem:lieAlgebra} is fullfilled for $X$ if and only if it is fullfilled for $\pi^\top X$. 
 Since any basis of $\Im\pi^\top$ is a free and thus also a minimal shuffle generating set by Corollary \ref{cor:someShuffleGenerators}, 
 $\pi^\top X\subseteq\Im\pi^\top$ shuffle generates if and only if it linearly spans $\Im\pi^\top$. 
 If $X$ freely shuffle generates, than the shuffle generating set $\pi^\top X$ must also have minimal dimension for each homogeneity, and thus freely generate due to the freeness of the shuffle algebra.
\end{proof}

Point 3.2 in Corollary \ref{cor:someShuffleGenerators} proves, using Corollary \ref{cor:Rrecursion}, what we set out to prove:
areas-of-areas do shuffle generate $\TS$.
\begin{corollary}
  \label{cor:areasShuffleGenerate}
  The set $\areatortkara$ of the Introduction is a generating set for $\TS$.
  A free generating set is given e.g.\ by
  any basis for the image of $\rho$.
  %\dots \todonotes{ corollary \ref{cor:someShuffleGenerators} } \dots
\end{corollary}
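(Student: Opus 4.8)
The plan is to deduce both assertions from the general criterion in \Cref{cor:someShuffleGenerators}, reducing everything to exhibiting a single basis of $\im\rho$ that happens to consist of areas-of-areas. The starting observation is that $\rho$ is precisely the transpose of the Dynkin operator: unwinding the defining identity $R=\sum_w w\otimes r(w)=\sum_v\rho(v)\otimes v$ from \eqref{eq:R} shows that $\langle\rho(v),w\rangle=\langle v,r(w)\rangle$, so $\rho=r^\top$. Since $r$ acts as multiplication by $n$ on the Lie elements of level $n$ (the characterization $r(x)=D(x)$ for Lie series, together with $D(w)=|w|w$), the rescaled map $\pi$ that equals $\tfrac1n r$ on $T_n((\R^d))$ has image $\mathfrak g$ and restricts to the identity there, hence is a graded projection onto $\mathfrak g$, and $\im\pi^\top=\im r^\top=\im\rho$. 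Thus \Cref{cor:someShuffleGenerators} (iii) applies and tells us that any homogeneous basis of $\im\rho$ is a \emph{free} shuffle generating set; this is exactly the second sentence of the statement.

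It then remains to place one such basis inside $\areatortkara$. The natural candidate is the family $\Rho_h=\rho(S_h)$ from \Cref{thm:Rho}, indexed by the Hall words $h$. First I would check that $\{\Rho_h\}$ is a basis of $\im\rho$. Linear independence at level $n$ follows by duality: if $\sum_{|h|=n}\lambda_h\Rho_h=r^\top\big(\sum_h\lambda_h S_h\big)=0$, then $\sum_h\lambda_h S_h\in\ker r^\top=(\im r)^\bot=\mathfrak g_n^\bot$, and pairing against each basis element $P_{h'}$ using $\langle S_h,P_{h'}\rangle=\delta_{h,h'}$ forces every $\lambda_{h'}=0$. Since the number of Hall words of length $n$ equals $\dim\mathfrak g_n$, which is also $\dim(\im\rho)_n$ (as $\pi$ is a projection onto $\mathfrak g$), the independent family $\{\Rho_h\}_{|h|=n}$ is in fact a basis of $(\im\rho)_n$.

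Finally I would invoke the explicit expansion in \Cref{thm:Rho}, namely $\Rho_h=\sum_\tau \tfrac{1}{c(\tau)}\,p^h_\tau\,\areatree(\tau)$, in which each $\areatree(\tau)$ is an iterated application of $\area$ to the letters $\word1,\dots,\word\ds$. By definition $\areatortkara$ is a linear subspace containing the letters and closed under $\area$, so every $\areatree(\tau)\in\areatortkara$ and therefore $\Rho_h\in\areatortkara$ for all $h$. Hence $\{\Rho_h\}\subseteq\areatortkara$ is already a free shuffle generating set, and since any superset of a generating set generates, $\areatortkara$ generates $\TS$. The argument is essentially an assembly of earlier results, so there is no single hard computation; the only point needing genuine care is the glue identifying $\im\rho$ with the image of the projection $\pi^\top$ and confirming that $\{\Rho_h\}$ has exactly the cardinality required to be a basis — all of the combinatorial content has already been absorbed into \Cref{cor:Rrecursion} and \Cref{thm:Rho}.
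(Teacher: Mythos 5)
Your proposal is correct and follows essentially the same route as the paper's second proof (``Via $R$ and $\rho$''): it reduces everything to the facts that $\im\rho$ is the image of the transpose of a graded projection onto $\mathfrak g$ (Example 2 of \Cref{cor:someShuffleGenerators}) and that $\im\rho\subseteq\areatortkara$, the latter obtained from the area expansion of $\Rho_h=\rho(S_h)$ in \Cref{thm:Rho}, which is itself a repackaging of the paper's $R\in\quadsym$ argument from \Cref{cor:Rrecursion}. The only additions are welcome bookkeeping — spelling out the rescaled Dynkin projection and verifying that $\{\Rho_h\}$ has the right cardinality to be a basis of $\im\rho$ — neither of which changes the substance.
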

\begin{proof}

We give three proofs. % of the fact that areas-of-areas shuffle generate the tensor algebra,
%\Cref{cor:areasShuffleGenerate}.

\textbf{Via $\Lambda$ and coordinates of the first kind}

%We define some elements of $\TSTC$, defined in \eqref{eq:TSTC}.
%Let $S:=\sum_{w} w\otimes w$, $R:=\lift{r}(S):=\sum_{w} w\otimes r(w)$, $\Lambda:=\sum_{h} \zeta_h\otimes P_h$.

Using $Dw=\sum_{uv=w,\,|u|\geq 1}\rho(u)\shuffle v$,  $\ad_P=\Ad_P$ for any Lie polynomial $P$ and $\sum_{w} w\otimes r(w)=\sum_{w} \rho(w) \otimes w$, we showed $(\lift{D}-\id)R=R\leftPrelie R$ (Lemma \ref{lem:R}).

We then have $R\in\quadsym$ due to $R_1=\sum_{\word{i}=\word{1}}^\word{d} \word{i}\otimes\word{i}$ and $R_n=\frac{1}{2(n-1)}\sum_{l=1}^{n-1}R_l\leftPrelieSym R_{n-l}$ (Corollary \ref{cor:Rrecursion}).

%From $S=\lift{r}^{-1}(R)=\sum_{l\geq 0}(\lift{A}_R )^l (e\otimes e)$ (Lemma \ref{prop:rIsInvertible}) with $\lift{A}_R z := \lift{D}^{-1}[ R \shuffleConcat z ]$ we follow
Using $R=\lift{r}\exp_{\shuffleConcatSymbol}\Lambda$ and Baker's identity for $\lift{r}$ (Lemma \ref{lem:PQ}), we show that $\Lambda_n$ is a linear combination of $R_n$ and $\shuffleConcatSymbol$-Lie-bracketings of lower order $\Lambda_i$. Thus $\Lambda\in\quadsymlie$, and together with the fact $\zeta_h=\coeval^{S_h}(\Lambda)$ following from the definition of $\Lambda$ we conclude that each $\zeta_h$ is a linear combination of shuffles of areas of areas.

\textbf{Via $R$ and $\rho$}

From $R\in\quadsym$ we conclude that the image of $\rho$ lies in $\areatortkara$, since $\rho(v)=\coeval^{v}(R)$ for any word $v$. Since the image of $\rho$ shuffle generates the shuffle algebra, Point 3.2 in Corollary \ref{cor:someShuffleGenerators}, so does $\areatortkara$.

\textbf{Via \cite{bib:DIM2018} and $\rho$}

Via a combinatorial expression for $\rho(\word{1}\word{2}\ldots\word{d})$ for any $d$ (Proposition \ref{prop:rhogroupalgebra}) we conclude (Corollary \ref{cor:rhoimageareas}) that the image of $\rho$ is a subspace of
\begin{equation*}
 \spann_\R \{ \word{i} : \word{i} \text{ a letter }\} \oplus \spann_\R \{ w (\word{ij} - \word{ji}) : w \text{ a word}, \word{i},\word{j} \text{ letters} \},
\end{equation*}
which is nothing but $\areatortkara$, according to \cite{bib:DIM2018}. Again, since $\rho$ shuffle generates the shuffle algebra, so do areas of areas.

\end{proof}

\begin{remark}\label{rem:areasShuffleGenerate}
  Corollary~\ref{cor:areasShuffleGenerate} is an a priori stronger statement than the following easy-to-prove statement, with which it is occasionally confused.
  \begin{center}
    \emph{(A) Any word is a linear combination of shuffles of letters and $\area$s of arbitrary words.}
  \end{center}
  An illustration of (A) is as follows.
  \begin{align*}
    \word{123}&=(\word1\hs\word2)\hs\word3=\tfrac12\{\word1\shuffle\word2+\area(\word1,\word2)\}\hs\word3
    \\&=\tfrac14\big[\{\word1\shuffle\word2+\area(\word1,\word2)\}\shuffle\word3+\area(\{\word1\shuffle\word2+\area(\word1,\word2)\},\word3)\big]
    \\&=\tfrac14\big[\word1\shuffle\word2\shuffle\word3+\area(\word1,\word2)\shuffle\word3+\area(\word1\shuffle\word2,\word3)
    +\area(\area(\word1,\word2),\word3)\big]
  \end{align*}
  Corollary~\ref{cor:areasShuffleGenerate} implies that this can be done with all the shuffles \emph{outside} all the $\area$s, namely
  \begin{center}
    \emph{(B) Any word is a linear combination of shuffles of letters and iterated $\area$s of letters.}
  \end{center}
For example
  \begin{align*}
    \word{123}&=\tfrac13\area(\word1,\area(\word2,\word3))+\tfrac16\area(\area(\word1,\word3),\word2)
    +\tfrac13\word1\shuffle\area(\word2,\word3)
    \\&\quad-\tfrac16\word2\shuffle\area(\word1,\word3)+\tfrac12\word3\shuffle\area(\word1,\word2)
    +\tfrac16\word1\shuffle\word2\shuffle\word3.
  \end{align*}
\end{remark}

\section{Applications}
\label{sec:applications}

The antisymmetrizing feature of the area operation
leads to pleasant properties
for piecewise linear paths and semimartingales.

\subsection{Piecewise linear paths: computational aspects}
\label{sec:computational}

\newcommand\discreteArea{\mathsf{DiscreteArea}}
For two time series $a_0, \dots, a_n, b_0, \dots, b_n \in \R$
define the new time series
\begin{align*}
  \discreteArea(a,b)_\ell
  &:= 
  \operatorname{Corr}_1(a,b)_\ell
  - 
  \operatorname{Corr}_1(b,a)_\ell \\
  &:=
  \sum_{i=0}^{\ell-1} a_{i+1} b_i
  -
  \sum_{i=0}^{\ell-1} b_{i+1} a_i,
  \quad \ell=0, \dots, n,
\end{align*}
set to be $0$ for $\ell=0$.
It is known (\cite[Section 3.2]{bib:DR2018}),
that for a piecewise linear curve $X$ through the points $0, x_1, \dots, x_n \in\R^2$, one has
\begin{align}
  \label{eq:oneHas}
  \Big\langle \area(\word1,\word2), S(X)_{0,n} \Big\rangle
  =
  \discreteArea\left(x^1, x^2\right)_n.
\end{align}

We will show that this iterates nicely.

\begin{lemma}\label{lem:piecewiseLinear}
  If $X,Y$ are piecewise linear then $\Area(X,Y)$ is piecewise linear.
\end{lemma}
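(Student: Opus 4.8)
The plan is to reduce to a single linear piece and exploit the cancellation that the antisymmetrization produces. First I would choose a common partition $0 = t_0 < t_1 < \dots < t_N = T$ refining the breakpoints of both $X$ and $Y$, so that on each closed subinterval $[t_k, t_{k+1}]$ both paths are affine, say $X_s = X_{t_k} + (s - t_k)\, a_k$ and $Y_s = Y_{t_k} + (s - t_k)\, b_k$ for constant slopes $a_k, b_k$. Since $\Area(X,Y)$ is given by iterated Riemann--Stieltjes integrals it is continuous, so it suffices to check that it is affine on each $[t_k, t_{k+1}]$; this is exactly what ``piecewise linear'' requires.

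Next I would use additivity of the integral to write, for $t \in [t_k, t_{k+1}]$, $\Area(X,Y)_t = \Area(X,Y)_{t_k} + \int_{t_k}^t X_s\, dY_s - \int_{t_k}^t Y_s\, dX_s$, recalling that under the convention $X_0 = Y_0 = 0$ the inner integral $\int_0^s dX_r$ collapses to $X_s$. On the piece the measures $dX_s = a_k\, ds$ and $dY_s = b_k\, ds$ are constant, so a one-line computation gives $\int_{t_k}^t X_s\, dY_s = b_k X_{t_k}(t - t_k) + \tfrac12 a_k b_k (t - t_k)^2$ and, symmetrically, $\int_{t_k}^t Y_s\, dX_s = a_k Y_{t_k}(t - t_k) + \tfrac12 a_k b_k (t - t_k)^2$.

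The key step --- and the whole reason the statement holds --- is that the two quadratic contributions $\tfrac12 a_k b_k (t-t_k)^2$ are \emph{identical} and therefore cancel in the antisymmetric difference, leaving $\int_{t_k}^t X_s\, dY_s - \int_{t_k}^t Y_s\, dX_s = (X_{t_k} b_k - Y_{t_k} a_k)(t - t_k)$, which is affine in $t$. Hence $\Area(X,Y)$ is affine on each subinterval and, being continuous, piecewise linear. I do not expect a genuine obstacle beyond making this cancellation explicit; the point worth stressing is that without the antisymmetrization a single integral $\int X\, dY$ is genuinely quadratic on a piece, so it is precisely the $\area$-type antisymmetry that removes the second-order term and preserves piecewise linearity.
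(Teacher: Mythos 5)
Your proof is correct and rests on exactly the same cancellation as the paper's: the paper differentiates $\Area(X,Y)_t$ twice and observes that the cross terms $\dot X_t\,\dot Y_t$ cancel by antisymmetry while $\ddot X_t=\ddot Y_t=0$ on each piece, which is precisely the differential form of your observation that the two quadratic contributions $\tfrac12 a_k b_k (t-t_k)^2$ cancel. Your integrated, partition-based version is marginally more careful at the breakpoints (where the paper's second derivative does not literally exist), but the mathematical content is identical.
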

\begin{proof}
  \begin{align*}
    \frac{d}{dt} \Area(X,Y)_t
    &= \int_0^t dX_r\,\dot Y_t - \int_0^t dY_r\,\dot X_t \\
    \frac{d^2}{dt^2} \Area(X,Y)_t
    &= \int_0^t dX_r\,\ddot Y_t + \dot X_t \dot Y_t - \int_0^t dY_r\,\ddot X_t - \dot Y_t \dot X_t \\
    &= \int_0^t dX_r\,\ddot Y_t - \int_0^t dY_r\,\ddot X_t 
    = 0,
  \end{align*}
  since $X,Y$ are piecewise linear.
  Hence, $\Area(X,Y)$ is indeed piecewise linear.
\end{proof}

In particular, for $\phi \in \areatortkara$ (defined in the Introduction) and $X$ piecewise linear
\begin{align*}
  t \mapsto \Big\langle \phi, S(X)_{0,t} \Big\rangle,
\end{align*}
is piecewise linear.
Note that by Lemma \ref{lem:AequalsA}, $\phi$ can be written as
linear combination of elements of the form $w (ij - ji)$.
One can also see directly that such elements yield something piecewise linear:
\NEXTPAPER{Rosa: inequality for $C^2$ paths}
\begin{align*}
  &\frac{d^2}{dt^2}
  \Big\langle w (ij -ji), S(X)_{0,t} \Big\rangle\\
  &\quad=
  \frac{d^2}{dt^2}
  \left\{
  \int_0^t \int_0^s \Big\langle w, S(X)_{0,r} \Big\rangle dX^{(i)}_r dX^{(j)}_s
  -
  \int_0^t \int_0^s \Big\langle w, S(X)_{0,r} \Big\rangle dX^{(j)}_r dX^{(i)}_s
  \right\} \\
  &\quad=
  \int_0^t \Big\langle w, S(X)_{0,r} \Big\rangle dX^{(i)}_r\,\ddot X^{(j)}_t
  +
  \Big\langle w, S(X)_{0,t} \Big\rangle \dot X^{(i)}_t \dot X^{(j)}_t\\
  &\qquad
  -
  \int_0^t \Big\langle w, S(X)_{0,r} \Big\rangle dX^{(j)}_r\,\ddot X^{(i)}_t
  -
  \Big\langle w, S(X)_{0,t} \Big\rangle \dot X^{(i)}_t \dot X^{(j)}_t\\
  &\quad= 0.
\end{align*}
% \begin{remark}
%   This calculation makes it evident that elements $\phi \in A$ should be the \emph{only}
%   elements that, for all piecewise linear $X$, pick out something piecewise linear from $S(X)_{0,t}$,
%   although we do not have a formal proof of this.
% \end{remark}

\begin{lemma}\label{lem:notrivialsignaturecomps}
 For all nonzero $z\in T(\mathbb{R}^d)$, there is a piecewise linear path $X$ such that $\langle z,S(X)\rangle\neq 0$.
\end{lemma}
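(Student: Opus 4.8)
The plan is to show that for every $n\ge 1$ the level-$n$ signatures of piecewise linear paths linearly span the whole tensor level $T_n((\R^d))$; the statement then follows by a dilation argument reducing to the homogeneous case. First I would perform that reduction. Write $z=\sum_k z_k$ with $z_k:=\proj_k z$, and let $n$ be the largest degree with $z_n\neq 0$. If I can produce a piecewise linear $X$ with $\langle z_n,S(X)\rangle\neq 0$, then, recalling that the dilation satisfies $\delta_\eps S(X)=S(\eps X)$, scaling level $k$ by $\eps^k$, and that $\eps X$ is again piecewise linear, the map
\begin{align*}
  \eps\mapsto \Big\langle z,S(\eps X)\Big\rangle=\sum_{k}\eps^k\,\Big\langle z_k,S(X)\Big\rangle
\end{align*}
is a nonzero polynomial in $\eps$, since its leading coefficient $\langle z_n,S(X)\rangle$ is nonzero. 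Hence it is nonzero for all but finitely many $\eps$, and any such $\eps X$ works. So it suffices to treat a nonzero homogeneous $z\in T_n(\R^d)$, for which $\langle z,S(X)\rangle=\langle z,\proj_n S(X)\rangle$.

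Next I would establish the spanning claim. At the fixed level $n$ the pairing between $T_n(\R^d)$ and $T_n((\R^d))$ is nondegenerate with the words as an orthonormal basis, so a nonzero $z$ gives a nonzero linear functional on the finite dimensional space $T_n((\R^d))$; it therefore suffices to show
\begin{align*}
  \spann_\R\Big\{\proj_n S(X):X\text{ piecewise linear}\Big\}=T_n((\R^d)).
\end{align*}
For increments $v_1,\dots,v_n\in\R^d$ I would consider the piecewise linear path with segments $t_1v_1,\dots,t_nv_n$ for scalars $t_1,\dots,t_n$. By Chen's identity its signature is $\exp(t_1v_1)\conc\cdots\conc\exp(t_nv_n)$, where $\exp(v)=\sum_{k\ge 0}v^{\otimes k}/k!$, so its level-$n$ part is the polynomial
\begin{align*}
  P(t_1,\dots,t_n)
  =\sum_{k_1+\cdots+k_n=n}\frac{t_1^{k_1}\cdots t_n^{k_n}}{k_1!\cdots k_n!}\,v_1^{\otimes k_1}\otimes\cdots\otimes v_n^{\otimes k_n}\ \in\ T_n((\R^d)).
\end{align*}
The monomial $t_1t_2\cdots t_n$ can only arise from $k_1=\cdots=k_n=1$, so its coefficient is exactly $v_1\otimes\cdots\otimes v_n$.

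Finally I would invoke the elementary fact that every coefficient of a polynomial map into a finite dimensional vector space lies in the linear span of its values: if a functional $\phi$ annihilates all values $P(t)$, then $\phi\circ P$ is identically zero as a scalar polynomial, so $\phi$ kills every coefficient of $P$, whence each coefficient lies in $\spann_\R\{P(t)\}$. Applying this to $P$ shows $v_1\otimes\cdots\otimes v_n\in\spann_\R\{\proj_n S(X)\}$; as the $v_i$ are arbitrary and the pure tensors span $T_n((\R^d))$, the spanning claim follows, and with the first paragraph this completes the argument. The only mildly delicate point is this last extraction of the multilinear coefficient, i.e.\ ensuring that passing from the values of $P$ to its individual coefficients stays inside the span; everything else is bookkeeping with Chen's identity and the dilation.
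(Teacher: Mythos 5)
Your proof is correct, but it takes a genuinely different route from the paper's. The paper argues in two cited steps: truncated grouplike elements span $T_{\le n}((\R^d))$ (so some grouplike $g_i$ pairs nontrivially with $z$), and Chow's theorem realizes $\proj_{\le n}g_i$ as the truncated signature of a piecewise linear path. You instead reduce to the top homogeneous component $z_n$ by the dilation $\eps\mapsto\langle z,S(\eps X)\rangle$ being a polynomial with leading coefficient $\langle z_n,S(X)\rangle$, and then prove directly that level-$n$ signatures of piecewise linear paths span $T_n((\R^d))$: Chen's identity gives $\proj_n\big(\exp(t_1v_1)\conc\cdots\conc\exp(t_nv_n)\big)$ as a polynomial in $t_1,\dots,t_n$ whose $t_1\cdots t_n$-coefficient is $v_1\otimes\cdots\otimes v_n$, and the standard annihilator argument shows every coefficient of a vector-valued polynomial lies in the span of its values. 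Both arguments are sound; the paper's is shorter but leans on two nontrivial external results (the grouplike spanning lemma and Chow's theorem), whereas yours is essentially self-contained, needing only Chen's identity and elementary linear algebra, and as a by-product shows that already the $n$-segment axis paths $\exp(t_1v_1)\conc\cdots\conc\exp(t_nv_n)$ span level $n$. The one delicate step you flag — extracting the multilinear coefficient while staying inside the span — is handled correctly by the double-annihilator argument in finite dimensions, and the trivial case $n=0$ (where $\langle z,S(X)\rangle=z_0\neq0$ for any path) is the only edge case worth mentioning explicitly.
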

\begin{proof}
 Let $z\in T(\mathbb{R}^d)\setminus\{0\}$ be arbitrary and let $n$ be its degree (the length of the longest word in the word-expansion of $z$). Since $G_{\leq n}:=\proj_{\leq n} G$ spans $T_{\leq n}(\mathbb{R}^d)$ (see e.g.\ \cite[Lemma~8]{bib:DR2018}), there are $g_1,\dots,g_k\in G$ and $r_1,\dots,r_k\in\R$ such that 
 \begin{equation*}
  \langle z,r_1g_1+\dots+r_kg_k\rangle=\langle z,z\rangle\neq0,
 \end{equation*}
 and hence there is $g_i\in G$ such that $\langle z,g_i\rangle\neq 0$. Now, due to Chow's theorem according to \cite[Theorem~7.28]{bib:FV2010}, there is a piecewise linear $X$ such that $\proj_{\leq n}g_i=\proj_{\leq n}S(X)$, which implies $\langle z,S(X)\rangle=\langle z,g_i\rangle\neq0$.
\end{proof}

\begin{theorem}
 $\langle S(X)_{0,t},\phi\rangle$ is piecewise linear for all piecewise linear paths $X$ if and only if $\phi\in \mathbb{R}\oplus \areatortkara$.
\end{theorem}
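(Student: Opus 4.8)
The plan is to prove both directions at once through the second derivative of $f(t):=\langle\phi,S(X)_{0,t}\rangle$. First I would observe that on the interior of each linear segment of a piecewise linear $X$ the signature components $\langle w,S(X)_{0,t}\rangle$ are polynomials in the elapsed time, hence $f$ is polynomial there; consequently $f$ is piecewise linear \emph{if and only if} $\ddot f\equiv 0$ on the interior of each segment. To compute $\ddot f$, write $\dot X_t=\sum_i\dot X^i_t\,\word{i}$, use $\tfrac{d}{dt}S(X)_{0,t}=S(X)_{0,t}\conc\dot X_t$, and introduce $\partial_i\colon\TS\to\TS$, the adjoint of right concatenation by $\word{i}$, so that $\langle\partial_i\phi,u\rangle=\langle\phi,u\word{i}\rangle$ and $\dot f(t)=\sum_i\dot X^i_t\,\langle\partial_i\phi,S(X)_{0,t}\rangle$. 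Differentiating once more and dropping the $\ddot X^i_t$ terms, which vanish on segment interiors because $X$ is piecewise linear, gives
\begin{equation*}
  \ddot f(t)=\sum_{i,j}\dot X^i_t\,\dot X^j_t\,\langle\partial_j\partial_i\phi,\,S(X)_{0,t}\rangle
  \qquad\text{on segment interiors,}
\end{equation*}
which is the conceptual form of the explicit computation already carried out for $w(\word{ij}-\word{ji})$ just before the theorem.

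The heart of the proof is to decouple the velocity from the signature. Introduce the $\TS$-valued quadratic form $Q(v):=\sum_{i,j}v_iv_j\,\partial_j\partial_i\phi$ for $v\in\R^d$, so that $\ddot f(t)=\langle Q(\dot X_t),S(X)_{0,t}\rangle$. I claim $\ddot f\equiv 0$ for every piecewise linear $X$ if and only if $Q\equiv 0$. The ``if'' direction is immediate from the displayed formula (and recovers the content of Lemma~\ref{lem:piecewiseLinear}). For ``only if'' I would argue by contraposition: if $Q(w)\neq 0$ for some $w\in\R^d$, then $Q(w)$ is a nonzero element of $\TS$, so by Lemma~\ref{lem:notrivialsignaturecomps} there is a piecewise linear path $Y$ with $\langle Q(w),S(Y)\rangle\neq 0$; concatenating $Y$ with a straight segment of velocity $w$ and letting $t$ approach the junction from the right, $S(X)_{0,t}\to S(Y)$ and hence $\ddot f(t)\to\langle Q(w),S(Y)\rangle\neq0$, so $f$ is not affine on that segment. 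This is the step I expect to be the main obstacle, since it is where a concrete witnessing path must be produced and the limit at the junction controlled.

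It remains to identify $Q\equiv 0$ with membership in $\R\oplus\areatortkara$. Being a quadratic form in $v$, $Q$ vanishes identically if and only if its symmetric coefficients vanish, i.e. $\partial_i\partial_j\phi+\partial_j\partial_i\phi=0$ for all letters $\word{i},\word{j}$; pairing against an arbitrary word $w$ and using $\langle\partial_j\partial_i\phi,w\rangle=\langle\phi,w\word{ji}\rangle$, this reads $\langle\phi,w\word{ij}\rangle=-\langle\phi,w\word{ji}\rangle$ for all words $w$ and letters $\word{i},\word{j}$. In words: the coefficients of $\phi$ on words of length at least two are antisymmetric under interchanging the final two letters, while the length $\le 1$ part is unconstrained. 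By the description of $\areatortkara$ from Lemma~\ref{lem:AequalsA}, the span of the elements $w(\word{ij}-\word{ji})$ is exactly the space supported on length $\ge 2$ words with last-two-letter antisymmetric coefficients, and the span of the letters together with $\R\,\emptyWord$ accounts for the free length $1$ and length $0$ parts. Hence $Q\equiv0$ if and only if $\phi\in\R\oplus\areatortkara$, which together with the preceding reductions proves the theorem.
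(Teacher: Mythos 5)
Your proof is correct and follows essentially the same route as the paper's: both reduce the question to the vanishing of the one-sided second derivative of $\langle\phi,S(X)_{0,t}\rangle$ on segment interiors, both produce the witness path by appending a linear segment of chosen velocity to a path supplied by Lemma~\ref{lem:notrivialsignaturecomps} and taking the limit of $\ddot f$ at the junction, and both identify the resulting coefficient condition (antisymmetry of $\phi$ in its last two letters) with membership in $\R\oplus\areatortkara$ via Lemma~\ref{lem:AequalsA}. Your packaging of that condition as a $\TS$-valued quadratic form $Q$ in the velocity is only a cosmetic streamlining of the paper's two-case choice of terminal velocity ($e_{\word{l}}$ versus $e_{\word{k}}+e_{\word{l}}$), replaced by the observation that a nonzero real quadratic form is nonzero at some vector.
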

\begin{proof}
 We already showed in Lemma \ref{lem:piecewiseLinear} that for piecewise linear $X$, $\langle S(X)_{0,t},\phi\rangle$ is again piecewise linear for all $\phi\in \areatortkara$. Since the whole tensor space $T(\mathbb{R}^d)=\mathbb{R}\oplus \areatortkara\oplus B$, where $B$ is $\spann_{\mathbb{R}}\{w\word{i}\word{j},\text{ $w$ a word, $\word{i}\leq\word{j}$ letters}\}$, and since the sum of a function which is not piecewise linear with a piecewise linear function is again not piecewise linear, it only remains to show that for any $b\in B\setminus\{0\}$, there is a piecewise linear $X$ such that $t\mapsto\langle b,S(X)_{0,t}\rangle$ is not piecewise linear.
 
 To this end, let $b=\sum_{i\leq j} d_{ij}\word{i}\word{j}\in B\setminus\{0\}$ be arbitrary. If there is a letter $\word{l}$ such that $d_{ll}\neq 0$ (case 1), choose a piecewise linear path $X:\,[0,2]\to\mathbb{R}^d$ such that $\langle d_{ll},S(X)_{0,1}\rangle\neq 0$ and such that $X{\restriction_{[1,2]}}$ is linear with $x_l=1$ and $x_i=0$ for $i\neq l$, where $x_i:=\dot X^i_{3/2}$. Otherwise, since $b$ is nonzero, there are letters $\word{k}<\word{l}$ such that $d_{kl}\neq 0$ (case 2), and in this case, choose a piecewise linear path $X:\,[0,2]\to\mathbb{R}^d$ such that $\langle d_{kl},S(X)_{0,1}\rangle\neq 0$ and such that $X{\restriction_{[1,2]}}$ is linear with $x_k=1, x_l=1$ and $x_i=0$ for $i\notin\{k,l\}$, where $x_i:=\dot X^i_{3/2}$. In both cases, such a piecewise linear $X$ exists due to \Cref{lem:notrivialsignaturecomps}.
 
 Since $X_{[1,2]}$ is linear, we have for arbitrary $z\in T(\mathbb{R}^d$ that $t\mapsto\langle z,S(X)_{0,t}\rangle$ is polynomial on $[1,2]$, and thus arbitraryly often continuously differentiable on $(1,2)$. Thus, since $\ddot{X}=0$ and $\dot{X}$ constant on $(1,2)$, we have
 \begin{equation*}
 \lim_{t\searrow 1}\frac{d^2}{dt^2}
  \Big\langle\sum_{i\leq j} d_{ij} \word{i}\word{j}, S(X)_{0,t} \Big\rangle=\sum_{i\leq j}\Big\langle d_{ij},S(X)_{0,1}\Big\rangle x_i x_j=\begin{cases}\langle d_{ll},S(X)_{0,1}\rangle\neq0, &\text{case 1}\\\langle d_{kl},S(X)_{0,1}\rangle\neq 0,&\text{case 2}\end{cases}
 \end{equation*}
In both cases, we conclude that $t\mapsto\langle b,S(X)_{0,t}\rangle$ is not piecewise linear on any interval $[1,s]$, $1<s\leq2$, which finishes the proof.
\end{proof}

The fact that ``being linear'' is preserved under the $\Area$-operation immediately
leads to the following theorem.
%{\color{orange}
%This iterates nicely:
\begin{theorem}
  \label{thm:piecewiseLinear}
  % compare conjectures.sage::conjecture_
  Let $X$ in $\R^\ds$ be a piecewise linear
  curve through the points $0, x_1, \dots, x_n \in\R^\ds$.
  Then: for every tree $\tau$,
  \begin{align*}
    \Big\langle \areatree(\tau), S(X)_{0,n} \Big\rangle
    =
    \discreteAreatree\left( \tau, x \right)_n.
  \end{align*}
  Here, $\areatree$ is defined in Lemma \ref{lem:RwithTrees}
  and $\discreteAreatree$ is defined similarly,
  as iterated bracketing using the $\discreteArea$-operator.
\end{theorem}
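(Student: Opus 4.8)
The plan is to induct on the tree $\tau$, matching the identical recursive shapes of $\areatree$ and $\discreteAreatree$. The base case is a single leaf $\tau = \word{i}$: here $\areatree(\word{i}) = \word{i}$ and $\langle \word{i}, S(X)_{0,\ell}\rangle = x^{i}_\ell$ is precisely the coordinate time series, which is the ``zeroth-order'' discrete area $\discreteAreatree(\word{i}, x)_\ell$. For the inductive step I would write $\tau = \treett$ and set $V_t := \langle \areatree(\tau_1), S(X)_{0,t}\rangle$ and $W_t := \langle \areatree(\tau_2), S(X)_{0,t}\rangle$. By the $\area$/$\Area$ correspondence recalled in the Introduction, $\langle \areatree(\tau), S(X)_{0,t}\rangle = \Area(V, W)_t$, so the whole statement reduces to identifying $\Area(V,W)_n$ with $\discreteArea(V, W)_n$ and then invoking the inductive hypothesis $V_\ell = \discreteAreatree(\tau_1, x)_\ell$, $W_\ell = \discreteAreatree(\tau_2, x)_\ell$ together with the recursive definition $\discreteAreatree(\tau,x) = \discreteArea(\discreteAreatree(\tau_1,x), \discreteAreatree(\tau_2,x))$.

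The crucial ingredient is a compatibility lemma: if $V, W$ are piecewise linear with $V_0 = W_0 = 0$ and linear on each interval $[\ell, \ell+1]$, then
\[
  \Area(V,W)_n = \discreteArea(V,W)_n,
\]
where on the right $V, W$ denote the time series of their values at the integer times. I would prove this by the direct Riemann--Stieltjes computation: since $V_0 = 0$ one has $\int_0^s dV_r = V_s$, hence $\int_0^n\int_0^s dV_r\,dW_s = \int_0^n V_s\,dW_s$, and on each segment linearity gives $\int_\ell^{\ell+1} V_s\,dW_s = \tfrac12(V_\ell + V_{\ell+1})(W_{\ell+1} - W_\ell)$. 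Subtracting the symmetric term, the two boundary contributions on each segment cancel and the segment reduces to a single bilinear expression in $V_\ell, V_{\ell+1}, W_\ell, W_{\ell+1}$, so that summing over $\ell$ reproduces exactly the defining sum of $\discreteArea$ (the overall sign being pinned down by the level-two identity \eqref{eq:oneHas}).

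This lemma applies at every level precisely because of Lemma \ref{lem:piecewiseLinear}: since $\areatree(\tau_1), \areatree(\tau_2) \in \areatortkara$, the maps $V, W$ are piecewise linear, so the hypotheses of the compatibility lemma are met, and $\Area(V,W)$ is again piecewise linear, allowing the induction to continue one level up. The requirement $V_0 = W_0 = 0$ holds automatically, since $\areatree(\tau)$ is a linear combination of words of length $\geq 1$ and therefore pairs to $0$ with the empty signature $S(X)_{0,0}$.

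I expect the main obstacle to be the bookkeeping in the compatibility lemma: carrying out the Riemann--Stieltjes integral over each linear piece and verifying that the antisymmetrization collapses the segment to the single bilinear boundary term, with the sign convention kept consistent with \eqref{eq:oneHas}. Everything else is a clean two-fold induction in which the recursions for $\areatree$ and $\discreteAreatree$ are matched step by step.
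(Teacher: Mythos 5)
Your proposal is correct and follows the same route as the paper: the paper's entire proof consists of observing that if $Y,Z$ are piecewise linear through $0,y_1,\dots$ and $0,z_1,\dots$ then $\Area(Y,Z)$ is piecewise linear through $0,\discreteArea(y,z)_1,\dots,\discreteArea(y,z)_n$, and then iterating \eqref{eq:oneHas} — exactly your compatibility lemma plus induction on the tree. You merely spell out the segment-by-segment Riemann--Stieltjes computation and the $V_0=W_0=0$ check that the paper leaves implicit.
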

\begin{example}
  For $\tau = \tikz[bintrees]{\node[dot]{}child{node [dot]{}child{node {$\word 1$}}child{node {$\word 2$}}}child{node {$\word 3$}}}$ the statement reads as
  \begin{align*}
    \Big\langle \areatree\left( \tikz[bintrees]{\node[dot]{}child{node [dot]{}child{node {$\word 1$}}child{node {$\word 2$}}}child{node {$\word 3$}}} \right), S(X)_{0,n} \Big\rangle
    &:=
    \Big\langle \area\left( \area\left( \word1, \word2\right), \word3 \right), S(X)_{0,n} \Big\rangle \\
    %&=
    %\dots \\
    &=\\
    \discreteAreatree\left( \tikz[bintrees]{\node[dot]{}child{node [dot]{}child{node {$\word 1$}}child{node {$\word 2$}}}child{node {$\word 3$}}}, x \right)_n
    &:=
    \discreteArea\left( \discreteArea\left( x^{(\word1)}, x^{(\word2)} \right), x^{(\word3)} \right)_n,
  \end{align*}
  which one can verify by a direct, but tedious, calculation.
\end{example}
\begin{remark}
  This is not obvious at all.
  Indeed, if we just look at the discrete integration operator (still assuming $x_0 = 0$)
  \newcommand\integrate{\mathsf{DiscreteIntegral}}
  \begin{align*}
    \Big\langle \word{12}, S(X)_{0,n} \Big\rangle
    &=
    \sum_{i=0}^{n-1} \frac{1}{2} \left( x^1_i + x^1_{i+1} \right) \left( x^2_{i+1}-x^2_i \right)
    =:
    \integrate\left(x^1,x^2\right)_n,
  \end{align*}
  this does \emph{not} iterate. Indeed,
  \begin{align*}
    \Big\langle \word{123}, S(X)_{0,n} \Big\rangle
    &= 
    \sum_{i=0}^{n-1}
    \left(
      \sum_{j=0}^{i-1} \frac{1}{2} \left( x^1_{j} + x^1_{j+1} \right) x^2_{j,j+1}
      %\sum_{j=0}^{i-1} \frac{1}{2} \left( x^1_{0,j} + x^1_{0,j+1} \right) x^2_{j,j+1}
      +
      \left( \frac{1}{2} x^1_{i} + \frac{1}{3!} x^1_{i,i+1} \right)
      %\left( \frac{1}{2} x^1_{i} + \frac{1}{3!} x^1_{i,i+1} - \frac{1}{2} x^1_0 \right)
      x^2_{i,i+1}
    \right)
    x^3_{i,i+1} \\
    &\not= \integrate\left( \integrate\left(x^1,x^2\right), x^3 \right)_n.
  \end{align*}

\end{remark}
\begin{proof}
  If $Y,Z$ are piecewise linear between the points $0,y_1,..$ and $0,z_1,\dots$, then
  $\Area(Y,Z)$ is piecewise linear between the points
  \begin{align*}
    0, \discreteArea(y,z)_1, .., \discreteArea(y,z)_n.
  \end{align*}
  We can hence iterate \eqref{eq:oneHas}.
\end{proof}

%\subsection{Semimartingale paths: It\=o vs Stratonovich}
\subsection{Martingales}

\newcommand\dito{d_{\operatorname{It\bar{o}}}}
\newcommand\dstrat{d_{\operatorname{Strat}}}

Another pleasant property of the area-operation presents itself
when working with a continuous semimartingale $M$.
One has (see \cite[Chapter III]{bib:IW1988})
\begin{align}
  \label{eq:quadraticVariation}
  \int_0^T M^i_{0,r} \dstrat M^j_r
  =
  \int_0^T M^i_{0,r} \dito M^j_r
  +
  \frac{1}{2} [ M^i, M^j ]_T.
\end{align}
where $\dstrat$ denotes Stratonovich integration, $\dito$ denotes It\=o integration
and $[ ., . ]$ denotes the quadratic covariation.

\begin{proposition}
 Let $(\Omega,\mathcal{F},\mathcal{F}_t,\mathbb{P})$ be a filtered probability space.
 Let $M$ be a continuous,
 $\mathcal{F}_t$-martingale such that all iterated It\={o} integrals are martingales.
 Then $t \mapsto \langle\phi,S_{\operatorname{Strat}}(M)_{0,t}\rangle$ is an
 $\mathcal{F}_t$-martingale for all $\phi\in \areatortkara$.
\end{proposition}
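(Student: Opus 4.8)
The plan is to induct on the construction of $\areatortkara$ as the smallest linear space containing the letters $\word1,\dots,\word d$ and closed under $\area$. Writing $N^\phi_t := \langle \phi, S_{\operatorname{Strat}}(M)_{0,t}\rangle$, the set of $\phi$ for which $N^\phi$ is a martingale is a linear space, so it suffices to verify the two generating operations. For the base case, $N^{\word i}_t = M^i_{0,t} = \int_0^t \dito M^i_s$ is a length-one iterated It\^o integral, hence a martingale since $M$ is. For the inductive step I must show that if $N^v$ and $N^w$ are martingales, then so is $N^{\area(v,w)}$.

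The heart of the matter is a cancellation. Since the Stratonovich integral obeys the ordinary calculus, the area relation of the Introduction holds verbatim for the Stratonovich signature, giving
\begin{align*}
  N^{\area(v,w)}_t
  = \Area(N^v,N^w)_t
  = \int_0^t N^v_s \dstrat N^w_s - \int_0^t N^w_s \dstrat N^v_s.
\end{align*}
Converting each Stratonovich integral to It\^o via \eqref{eq:quadraticVariation}, the two correction terms are $\tfrac12[N^v,N^w]_t$ and $\tfrac12[N^w,N^v]_t$; they are equal by symmetry of the quadratic covariation and therefore cancel under the antisymmetrization, leaving
\begin{align*}
  N^{\area(v,w)}_t = \int_0^t N^v_s \dito N^w_s - \int_0^t N^w_s \dito N^v_s.
\end{align*}
When $N^v,N^w$ are (local) martingales this is a sum of It\^o integrals of continuous adapted processes against (local) martingales, hence a continuous local martingale. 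Iterating, every $N^\phi$ with $\phi\in\areatortkara$ carries no bounded-variation part; equivalently each such $N^\phi$ can be written as a finite sum $\sum_i \int H^{\phi,i}\dito M^i$ of It\^o integrals against $M$, and is in particular a continuous local martingale.

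It remains to promote the local martingale property to a genuine martingale, and this is the delicate point. Expanding the recursion, the integrands $H^{\phi,i}$ are built not only from iterated It\^o integrals of $M$ but also from their mutual brackets (for instance terms such as $\int_0^\cdot [M^i,M^j]_s\,\dito M^k_s$ already appear at the level of $\area(\word1,\area(\word2,\word3))$), so $N^\phi$ is \emph{not} merely a linear combination of iterated It\^o integrals and the hypothesis cannot be applied to it term by term. The clean way to proceed is to strengthen the induction hypothesis to an integrability statement — for instance that each $N^\phi$ is a square-integrable martingale with $\mathbb{E}\!\int_0^T (N^\phi_s)^2\,d\langle M^i\rangle_s<\infty$ — so that at each step the It\^o integrals $\int N^v\dito N^w$ and $\int N^w\dito N^v$ are true $L^2$-martingales rather than merely local ones; the assumption that all iterated It\^o integrals of $M$ are martingales is exactly what supplies the required moment bounds on the building blocks. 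Verifying that these bounds survive the products and brackets produced at each inductive step, and thereby closing the strengthened induction, is the main obstacle; the algebraic cancellation above is the easy and essential part.
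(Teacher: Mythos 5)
Your proof takes essentially the same route as the paper's: the entire content of the paper's argument is the observation that the symmetric quadratic-covariation correction terms cancel under antisymmetrization, so that $\Area$ of two martingales is a difference of It\^o integrals of martingales against martingales, and one then iterates over the construction of $\areatortkara$. The integrability issue you flag at the end --- promoting the resulting local martingales to genuine martingales through the induction --- is not addressed in the paper either; its proof is a one-line version of what you call the ``easy and essential part,'' so the gap you honestly leave open is present in the published argument as well rather than being a defect peculiar to your write-up.
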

\begin{proof}
 Let $X,Y$ be as in the statement. Assume for simplicity $X_0=Y_0=0$ almost surely.
 Then, using Equation \eqref{eq:quadraticVariation},
 \begin{equation*}
  \Area_{\operatorname{Strat}}(X,Y)_t:=\int_0^t X_r \dstrat Y_r-\int_0^T Y_r\dstrat X_r=\int_0^t X_r\dito Y_r -\int_0^t Y_r\dito X_r,
 \end{equation*}
 is again an $(\mathcal F_t)_t$-martingale.
 Hence for every $\phi\in \areatortkara$, $\langle\phi,S_{\operatorname{Strat}}(M)_{0,t}\rangle$ is a
 $\mathcal{F}_t$-martingale.
 %
 %If $V,W$ are $\tau_n$ local martingales, then
 %\begin{align*}
 % \Area_{\operatorname{Strat}}(X,Y)_t^{\tau_n}&=\int_0^{t\wedge\tau_n} X_r \dito Y_r-\int_0^{t\wedge\tau_n} Y_r\dito X_r\\
 % &=\int_0^{t\wedge\tau_n} X^{\tau_n}_r \dito Y^{\tau_n}_r-\int_0^{t\wedge\tau_n} Y^{\tau_n}_r\dito X^{\tau_n}_r\\
 % &=\int_0^{t} X^{\tau_n}_r \dito Y^{\tau_n}_r-\int_0^{t} Y^{\tau_n}_r\dito X^{\tau_n}_r=\Area_{\operatorname{Strat}}(X^{\tau_n},Y^{\tau_n})_t
 %\end{align*}
 %is again a martingale for each $n$, thus $\Area_{\operatorname{Strat}}(X,Y)_t$ is a local martingale.
\end{proof}

\begin{proposition}
 Let $M$ be the piecewise linear interpolation of a time discrete martingale whose moments all exist.
 Then $t \mapsto \langle\phi,S(M)_{0,t}\rangle$ is again the piecewise linear interpolation of a time discrete martingale.
\end{proposition}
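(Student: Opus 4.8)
The plan is to combine the piecewise-linearity result of \Cref{thm:piecewiseLinear} with a discrete analogue of the martingale-preservation argument from the previous proposition, working throughout with $\phi\in\areatortkara$. Write $M$ as the piecewise linear interpolation of a time-discrete $\R^\ds$-valued martingale $(x_\ell)_{\ell=0}^n$ with $x_0=0$, adapted to a filtration $(\mathcal{F}_\ell)_\ell$. First I would recall from \Cref{lem:piecewiseLinear} that $t\mapsto\langle\phi,S(M)_{0,t}\rangle$ is piecewise linear, so it is determined by its corner values $\langle\phi,S(M)_{0,\ell}\rangle$, $\ell=0,\dots,n$; it therefore suffices to show that the discrete process $\ell\mapsto\langle\phi,S(M)_{0,\ell}\rangle$ is an $(\mathcal{F}_\ell)_\ell$-martingale. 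By \Cref{thm:piecewiseLinear}, for a single tree $\tau$ these corner values are exactly the iterated discrete areas $\discreteAreatree(\tau,x)_\ell$, so by linearity the whole problem reduces to showing that iterated applications of the $\discreteArea$ operator to the coordinate martingales $x^{(1)},\dots,x^{(\ds)}$ produce discrete martingales.

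The key step is the following discrete analogue: if $a$ and $b$ are $(\mathcal{F}_\ell)_\ell$-adapted martingales null at $\ell=0$ with finite second moments, then $\discreteArea(a,b)$ is adapted, null at $0$, and again an $(\mathcal{F}_\ell)_\ell$-martingale. This I would verify directly from the increment
\begin{align*}
  \discreteArea(a,b)_{\ell+1}-\discreteArea(a,b)_\ell
  = a_{\ell+1}b_\ell - b_{\ell+1}a_\ell,
\end{align*}
observing that $a_\ell,b_\ell$ are $\mathcal{F}_\ell$-measurable, so that
\begin{align*}
  \E\big[a_{\ell+1}b_\ell - b_{\ell+1}a_\ell \,\big|\,\mathcal{F}_\ell\big]
  = b_\ell\,\E[a_{\ell+1}\mid\mathcal{F}_\ell] - a_\ell\,\E[b_{\ell+1}\mid\mathcal{F}_\ell]
  = b_\ell a_\ell - a_\ell b_\ell = 0.
\end{align*}
This is precisely the discrete, antisymmetrized version of the cancellation in \eqref{eq:quadraticVariation}: each summand of $\operatorname{Corr}_1(a,b)$ pairs the \emph{increment} $a_{i+1}$ with the \emph{left endpoint} $b_i$, mimicking an It\^o integral whose integrand is $\mathcal{F}_i$-measurable, and the antisymmetrization removes the quadratic-covariation term that would otherwise appear.

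With this lemma in hand I would argue by induction on the tree $\tau$ (equivalently, on the Tortkara generation of $\phi$): the base case is that each coordinate $x^{(i)}$ is a discrete martingale null at $0$, and the inductive step applies the lemma to $a=\discreteAreatree(\tau',x)$ and $b=\discreteAreatree(\tau'',x)$, both adapted martingales null at $0$ by the induction hypothesis, to conclude the same for $\discreteAreatree(\tau,x)$. Taking the linear combination over trees that represents $\phi\in\areatortkara$ then shows that $\ell\mapsto\langle\phi,S(M)_{0,\ell}\rangle$ is a martingale, and interpolating linearly yields the claim. The only genuine obstacle is integrability: each iterated discrete area is a finite sum of products of the values $x^{(i)}_\ell$, the number of factors being bounded by the length of the words occurring in $\phi$, so for every conditional expectation above to be well defined one needs all such products $x^{(i_1)}_{\ell_1}\cdots x^{(i_k)}_{\ell_k}$ to be integrable. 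This is exactly what the hypothesis that all moments of the discrete martingale exist guarantees, and it is the reason this assumption, absent from the continuous statement, is imposed here.
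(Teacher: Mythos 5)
Your proposal is correct and follows essentially the same route as the paper: the heart of both arguments is the identical one-step computation $\E[a_{\ell+1}b_\ell - b_{\ell+1}a_\ell \mid \mathcal{F}_\ell] = b_\ell a_\ell - a_\ell b_\ell = 0$ showing that $\discreteArea$ of two discrete martingales is again a discrete martingale. You are in fact more explicit than the paper, which leaves the reduction via \Cref{thm:piecewiseLinear}, the induction over trees, and the role of the all-moments hypothesis implicit, and instead appends a short remark on local martingales that your write-up omits but the statement does not require.
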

\begin{proof}
 Let $(a_n)_n, (b_n)_n$ be two time discrete
 $L^2$ martingales for the filtration $\mathcal{F}_k$ whose moments all exist. Then,
 \begin{align*}
  &\mathbb{E}[\discreteArea(a,b)_{k+1}|\mathcal{F}_k]-\discreteArea(a,b)_k\\
  &\quad=\mathbb{E}[\discreteArea(a,b)_{k+1}-\discreteArea(a,b)_k|\mathcal{F}_k]=\mathbb{E}[a_{k+1}b_k-b_{k+1}a_k|\mathcal{F}_k]\\
  &\quad=b_k\mathbb{E}[a_{k+1}|\mathcal{F}_k]-a_k\mathbb{E}[b_{k+1}|\mathcal{F}_k]=b_k a_k-a_kb_k=0,
  \end{align*}
  thus $(\discreteArea(a,b)_n)_n$ is again an $(\mathcal{F}_n)_n$ martingale. If $(c_n)_n, (d_n)_n$ are $\tau_n$ local {\color{orange} $L^2$} martingales, then
  \begin{equation*}
   \discreteArea(a,b)^{\tau_k}=\discreteArea(a^{\tau_k},b^{\tau_k})^{\tau_k}=\discreteArea(a^{\tau_k},b^{\tau_k})
  \end{equation*}
  is a martingale for any $k$ and thus $\discreteArea(a,b)$ is a $\tau_k$ local martingale.
\end{proof}

%Since the quadratic covariation is symmetric in its arguments, we get
%\begin{align*}
%  \int_0^T M^i_{0,r} \dstrat M^j_r
%  -
%  \int_0^T M^j_{0,r} \dstrat M^j_r
%  =
%  \int_0^T M^i_{0,r} \dito M^j_r
%  -
%  \int_0^T M^j_{0,r} \dito M^j_r,
%\end{align*}
%that is: the Stratonovich area and the It\=o area coincide.

The previous results imply that for $M$ a martingale with all iterated integrals being in $L^1(\Omega)$, 
or for $M$ a linear interpolation of a time discrete martingale, all expectancies of areas of areas vanish for $M$. 
This naturally leads to the very interesting question of what is the class of all semimartingale paths 
%, or even larger the class of all stochastic geometric rough paths, 
such that all area expectancies vanish?

In particular, for all these paths,
we have, by \Cref{cor:rhoimageareas}, that the expected Stratonovich signature lies in the kernel of $r$,
\begin{align*}
   r(\E[ S_{\operatorname{Strat}}(M)_{0,T}])&=\E\left[ r\left( S_{\operatorname{Strat}}(M)_{0,T} \right) \right]=\sum_w\E[\langle w,S_{\operatorname{Strat}}(M)_{0,T}\rangle]\,r(w)\\
   &=\sum_w\E[\langle\rho(w),S_{\operatorname{Strat}}(M)_{0,T}\rangle]\,w=0.
\end{align*}
In fact this last property obviously holds for any Semimartingale with $\langle \rho(w),S_{\operatorname{Strat}}\rangle=0$ for any $w$, 
which is a priori a larger class than just those paths where the expectancy of all areas of areas vanishes since $\Im\rho$ does not linearly span $\areatortkara$.

\section{Linear span of area expressions}
\label{sec:linear}
\newcommand{\linearA}{\tilde \areatortkara}
\newcommand{\letters}{L}
\def\hssymbol{\mathbin{\succ}}
\newcommand{\hstwo}[2]{#1\hssymbol#2} %half shuffle
\newcommand{\vol}{\mathsf{vol}}

%In several works, Dzhumadil'daev 
%studied the operation $\area$.

In \cite{bib:Dzh2007}
the antisymmetric, non-associative operation $\area$ was studied in detail.
It was shown that
\begin{itemize}
  \item $\area$ does not satisfy any new identity of degree $3$;
    in particular it does \emph{not} satisfy the Jacobi identity.
  \item On degree $4$ there is exactly one new identity, the \emph{Tortkara identity}.
    Over a field of characteristic zero, we have the following equivalent formulations:
    %different from $2$, it reads as
    \begin{align*}
     &\area(\area(a,b),\area(c,b))
     =\area(\vol(a,b,c),b),\\
      % (ab)(cd) + (ad)(cb) = J(a, b, c)d + J(a, d, c)b.
      &\area( \area(a,b), \area(c,d) )
      +
      \area( \area(a,d), \area(c,b) )
      =
      \area( \vol(a,b,c), d)
      +
      \area( \vol(a,d,c), b),\\
    %\end{align*}
    % (t 1 t 2 )t 3 + (t 2 t 3 )t 1 + (t 3 t 1 )t 2
    %
    %\todo{ 'characteristic different from $2$' vs 'characteristic $0$' !? }
%     Over a field of characteristic $0$, we equivalently have
    %\begin{align*}
      &2\cdot \area(\area(a,b),\area(c,d))\\
      &=\area( \vol(a,b,c), d)
      +
      \area( \vol(a,d,c), b)
      +\area( \vol(b,a,d), c)
      +
      \area( \vol(b,c,d), a)     
    %    result.append( lb(+1, [1,2,3,4]) )
    %    result.append( lb(-1, [1,2,4,3]) )
    %    result.append( lb(-1, [1,3,2,4]) )
    %    result.append( lb(-1, [1,3,4,2]) )
    %
    %    result.append( lb(+1, [1,4,2,3]) )
    %    result.append( lb(+1, [1,4,3,2]) )
    %    result.append( lb(+1, [2,3,1,4]) )
    %    result.append( lb(+1, [2,3,4,1]) )
    %
    %    result.append( lb(-1, [2,4,1,3]) )
    %    result.append( lb(-1, [2,4,3,1]) )
    %    result.append( lb(-1, [3,4,1,2]) )
    %    result.append( lb(+1, [3,4,2,1]) )
    \end{align*}
     where $\vol(x,y,z) := \area(\area(x,y),z) + \area(\area(y,z),x) + \area(\area(z,x),y)$.
\end{itemize}

We chose the notation $\vol$ because $\langle\vol(u,v,w),S(X)_{0,T}\rangle$ is six times the signed volume
(\cite{bib:DR2018}) of the curve $(U,V,W)$, where
\begin{equation*}
  U_t = \Big\langle u, S(X)_{0,t} \Big\rangle, \quad
  V_t = \Big\langle v, S(X)_{0,t} \Big\rangle, \quad
  W_t = \Big\langle w, S(X)_{0,t} \Big\rangle.
\end{equation*}

The Tortkara identity is readily verified on all of $T^{\geq 1}(\R^d)$ by computing
\begin{align*}
 \area(\area(\word{1},\word{2}),\area(\word{3},\word{2}))&=-2\,\word{1223}+2\,\word{1232}+2\,\word{2213}-2\,\word{2231}-2\,\word{3212}+2\,\word{3221}\\
 &=\area(\vol(\word{1},\word{2},\word{3}),\word{2}),
\end{align*}
where
\begin{equation*}
 \vol(\word{1},\word{2},\word{3})=\word{123}-\word{132}-\word{213}+\word{231}+\word{312}-\word{321}.
\end{equation*}
Indeed, this computation suffices to show the Tortkara identity on $T^{\geq 1}(\R^d)$ due to the universal property of the free Zinbiel algebra $(T^{\geq 1}(\R^3),\hs)$, 
i.e.\ for any $a,b,c\in T^{\geq 1}(\R^d)$, 
there is a unique Zinbiel homomorphism $(T^{\geq 1}(\R^3),\hs)\to(T^{\geq 1}(\R^d),\hs)$ with $\word{1}\mapsto a$,  $\word{2}\mapsto b$,  $\word{3}\mapsto c$, 
and then the Tortkara identity follows for $a,b,c$ from the above computation by the homomorphism property
and the fact that $\area$ is nothing but the antisymmetrization of $\hs$.

\begin{proof}[Proof of equivalence of the tortkara identities]
 Let $\tortkara$ be a vector space over an arbitrary field with a bilinear antisymmetric operation $\tortop$ and 
 \begin{equation*}
  \tortjacobi(x,y,z) := \tortop(\tortop(x,y),z) + \tortop(\tortop(y,z),x) + \tortop(\tortop(z,x),y).
 \end{equation*}
 
 \begin{enumerate}
  \item Assume first that for all $x,y,z\in\tortkara$ we have
  \begin{equation*}
   \tortop(\tortop(x,y),\tortop(z,y))=\tortop(\tortjacobi(x,y,z),y).
  \end{equation*}
  Then, for all $a,b,c,d\in\tortkara$, due to bilinearity, we have
  \begin{align*}
   &\tortop(\tortop(a,b),\tortop(c,b))+\tortop(\tortop(a,b),\tortop(c,d))+\tortop(\tortop(a,d),\tortop(c,b))+\tortop(\tortop(a,d),\tortop(c,d))\\
   &=\tortop(\tortop(a,b+d),\tortop(c,b+d))=\tortop(\tortjacobi(a,b+d,c),b+d)\\
   &=\tortop(\tortjacobi(a,b,c),b)+
   \tortop(\tortjacobi(a,b,c),d)+
   \tortop(\tortjacobi(a,d,c),b)+
   \tortop(\tortjacobi(a,d,c),d).
  \end{align*}
  Since $\tortop(\tortop(a,b),\tortop(c,b))=\tortop(\tortjacobi(a,b,c),b)$ and $\tortop(\tortop(a,d),\tortop(c,d))=\tortop(\tortjacobi(a,d,c),d)$, we obtain the identity
  \begin{equation*}
   \tortop( \tortop(a,b), \tortop(c,d) )
      +
      \tortop( \tortop(a,d), \tortop(c,b) )
      =
      \tortop( \tortjacobi(a,b,c), d)
      +
      \tortop( \tortjacobi(a,d,c), b)
  \end{equation*}
  for all $a,b,c,d\in\tortkara$. Using antisymmetry, we furthermore get
  \begin{align*}
   &\tortop( \tortop(a,b), \tortop(c,d) )+\tortop( \tortop(a,b), \tortop(c,d) )\\
    &=\tortop( \tortop(a,b), \tortop(c,d) )
      +
      \tortop( \tortop(a,d), \tortop(c,b) )
      +  \tortop( \tortop(b,a), \tortop(d,c) )
      +
      \tortop( \tortop(b,c), \tortop(d,a) )\\
      &=\tortop( \tortjacobi(a,b,c), d)
      +
      \tortop( \tortjacobi(a,d,c), b)
      +
      \tortop( \tortjacobi(b,a,d), c)
      +
      \tortop( \tortjacobi(b,c,d), a)
  \end{align*}
  If the field is of characteristic different from two, this reads as
  \begin{equation*}
   2\tortop( \tortop(a,b), \tortop(c,d) )=\tortop( \tortjacobi(a,b,c), d)
      +
      \tortop( \tortjacobi(a,d,c), b)
      +
      \tortop( \tortjacobi(b,a,d), c)
      +
      \tortop( \tortjacobi(b,c,d), a)
  \end{equation*}
  for all $a,b,c,d\in\tortkara$.
  \item Assume now that for all $a,b,c,d\in\tortkara$ we have
  \begin{equation*}
   \tortop( \tortop(a,b), \tortop(c,d) )
      +
      \tortop( \tortop(a,d), \tortop(c,b) )
      =
      \tortop( \tortjacobi(a,b,c), d)
      +
      \tortop( \tortjacobi(a,d,c), b).
  \end{equation*}
  This immediately implies
  \begin{equation*}
   \tortop(\tortop(x,y),\tortop(z,y))+\tortop(\tortop(x,y),\tortop(z,y))=\tortop(\tortjacobi(x,y,z),y)+\tortop(\tortjacobi(x,y,z),y)
  \end{equation*}
  for all $x,y,z\in\tortkara$, which is an empty statement in characteristic two, but in characteristic different from two reduces to
  \begin{equation}\label{eq:tortkara}
   \tortop(\tortop(x,y),\tortop(z,y))=\tortop(\tortjacobi(x,y,z),y).
  \end{equation}
  \item For the last implication we want to show, assume that the characteristic of the underlying field is different from two and for all $a,b,c,d\in\tortkara$ we have
  \begin{equation*}
   2\tortop( \tortop(a,b), \tortop(c,d) )=\tortop( \tortjacobi(a,b,c), d)
      +
      \tortop( \tortjacobi(a,d,c), b)
      +
      \tortop( \tortjacobi(b,a,d), c)
      +
      \tortop( \tortjacobi(b,c,d), a).
  \end{equation*}
  This implies
  \begin{align*}
   2\tortop(\tortop(x,y),\tortop(y,z))=
   2\tortop( \tortjacobi(x,y,z), y)
   +\tortop( \tortjacobi(y,x,y), z)
      +
      \tortop( \tortjacobi(y,z,y), x)=
   2\tortop( \tortjacobi(x,y,z), y),
  \end{align*}
  since due to antisymmetry
  \begin{equation*}
   \tortjacobi(y,x,y)=\tortjacobi(y,z,y)=0.
  \end{equation*}
  Since the characteristic is different from two, we can divide by two, and thus again arrive at \eqref{eq:tortkara}.

 \end{enumerate}

\end{proof}

In \cite[Section 6]{bib:DIM2018} it is shown
that in $d=2$, $(\areatortkara,\area)$ is the free tortkara algebra.%
\footnote{
Recall the definition of $\areatortkara$ from the introduction: the smallest linear space containing the letters $\word1,..\word{d}$
and being closed under the $\area$ operation.}

This linear space has a surprisingly simple description.
The following is \cite[Theorem 2.1]{bib:DIM2018} (see also \cite[Section 3.2, Theorem 31]{bib:Rei2018} for another proof in $d=2$).
\begin{lemma}
  \label{lem:AequalsA}
  \begin{align}
    \label{eq:AequalsA}
    \areatortkara = \spann_\R \{ \word{i} : \word{i} \text{ a letter }\} \oplus \spann_\R \{ w (\word{ij} - \word{ji}) : w \text{ a word}, \word{i},\word{j} \text{ letters} \}.
  \end{align}
\end{lemma}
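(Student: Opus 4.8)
The plan is to prove the two inclusions of \eqref{eq:AequalsA} separately. Write $V$ for the right-hand side. Since $\word{ij}-\word{ji}$ is antisymmetric in $\word{i},\word{j}$ and vanishes when $\word{i}=\word{j}$, the second summand is exactly the space $A$ of homogeneous elements of degree $\ge 2$ that are \emph{antisymmetric in their last two letters}: if $\sigma$ denotes the involution on $T_{\ge 2}(\R^d)$ swapping the last two letters of each word, then $A=\ker(\id+\sigma)$ and $V=\spann_\R\{\text{letters}\}\oplus A$. I also record the half-shuffle identity $a\hs b=(a\shuffle\tilde b)\conc\word{c}$ for $b=\tilde b\,\word{c}$ with last letter $\word{c}$, so that for words $x=\tilde x\,\word{a}$ and $y=\tilde y\,\word{b}$ one has $\area(x,y)=(x\shuffle\tilde y)\conc\word{b}-(y\shuffle\tilde x)\conc\word{a}$.

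First I would show $\areatortkara\subseteq V$ by checking that $V$ contains the letters (clear) and is closed under $\area$; minimality of $\areatortkara$ then gives the inclusion. For closure I apply $\id+\sigma$ to $\area(x,y)$. Splitting each shuffle according to whether its last position comes from the left or the right factor, namely $x\shuffle\tilde y=(x\shuffle\widehat{\tilde y})\word{c}+(\tilde y\shuffle\tilde x)\word{a}$ and symmetrically, a direct computation reorganizes $(\id+\sigma)\area(x,y)$ into $\Sigma_1-\Sigma_2$, where $\Sigma_1=\sum(x\shuffle\widehat{\tilde y})\conc(\word{b}\shuffle\word{c})$ and $\Sigma_2=\sum(y\shuffle\widehat{\tilde x})\conc(\word{a}\shuffle\word{e})$, with $\word{e},\word{a}$ (resp. $\word{c},\word{b}$) the last two letters of $x$ (resp. $y$). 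The point is that under the identification $T_{\ge 2}(\R^d)\cong T(\R^d)\otimes T_2(\R^d)$ of a word with (prefix)$\otimes$(last two letters) one has $\Sigma_1=(L_x\otimes\id)\,(\id+\sigma)y$, where $L_x=x\shuffle(\cdot)$; since $y\in A$ forces $(\id+\sigma)y=0$, we get $\Sigma_1=0$, and symmetrically $\Sigma_2=0$ from $x\in A$. Hence $\sigma\area(x,y)=-\area(x,y)$, i.e. $\area(x,y)\in A\subseteq V$. The finitely many cases where one or both arguments are letters are checked by hand, the base case being $\area(\word{i},\word{j})=\word{ij}-\word{ji}\in V$.

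The reverse inclusion $V\subseteq\areatortkara$ is the substantive part. As the letters lie in $\areatortkara$ by definition, it remains to realize every $w(\word{ij}-\word{ji})$ as an iterated area. I would induct on $|w|$, the base case $|w|=0$ being $\area(\word{i},\word{j})=\word{ij}-\word{ji}$. For the step, observe that applying $\area(\word{m},\cdot)$ to a generator $z=w'(\word{ij}-\word{ji})\in\areatortkara$ (available by the induction hypothesis) produces $\word{m}\,w'(\word{ij}-\word{ji})$ together with further degree-$n$ generators in which the distinguished letters have been permuted; concretely $\area(\word{m},\area(\word{i},\word{j}))=\word{m}(\word{ij}-\word{ji})+\word{i}(\word{mj}-\word{jm})+\word{j}(\word{im}-\word{mi})$. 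Since $\area(\word{m},z)\in\areatortkara\subseteq A$ is homogeneous of degree $n$, it is automatically a combination of the degree-$n$ generators, so collecting all such identities (together with the more general images $\area(\areatortkara_k,\areatortkara_{n-k})$) yields a linear system relating the prepended generators to elements of $\areatortkara$.

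The hard part is exactly this last step: the correction terms are again degree-$n$ generators, not lower-degree ones, so the induction does not close termwise, and one must prove that the images of $\area$ actually span all of $A_n$, equivalently that the associated transition matrix on the basis $\{w(\word{ij}-\word{ji})\}$ of $A_n$ is nonsingular. This is where I would invoke the combinatorial analysis of \cite[Theorem 2.1]{bib:DIM2018} (and, for $d=2$, the independent argument of \cite[Theorem 31]{bib:Rei2018}). An alternative route is to exhibit an explicit family of iterated areas and, using the inclusion $\areatortkara\subseteq V$ already established together with the dimension count $\dim A_n=\binom{d}{2}\,d^{\,n-2}$, force the equality $\areatortkara_n=V_n$ by comparing dimensions level by level.
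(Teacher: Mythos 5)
The paper does not prove this lemma at all: it is quoted verbatim as \cite[Theorem 2.1]{bib:DIM2018}, with \cite[Theorem 31]{bib:Rei2018} cited for an independent proof when $d=2$. Measured against that, your proposal actually does more than the paper, but it still does not constitute a proof. Your argument for the inclusion $\areatortkara\subseteq V$ is correct and self-contained: identifying the second summand with $\ker(\id+\sigma)$ (antisymmetry in the last two letters), splitting $x\shuffle\tilde y=(x\shuffle\widehat{\tilde y})\word{c}+(\tilde x\shuffle\tilde y)\word{a}$, and observing that $(\id+\sigma)\area(x,y)$ collapses to $\Sigma_1-\Sigma_2$ with each $\Sigma_i$ factoring through $(\id+\sigma)$ applied to one argument, is a clean stability argument (it is close in spirit to the stability fact the paper uses in \Cref{cor:rhoimageareas}, namely that the degree-$\ge 2$ part of the right-hand side is stable under left concatenation). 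The one imprecision is that the cases where an argument is a letter are not ``finitely many''; they form an infinite family, though the same computation disposes of them. For the reverse inclusion $V\subseteq\areatortkara$ — which is the entire content of the lemma — you correctly diagnose why a naive induction on $|w|$ fails (the correction terms produced by $\area(\word{m},\cdot)$ live in the same degree, so one must show the resulting transition map on $A_n$ is surjective), and then you invoke exactly the reference the paper cites. So the proposal is consistent with the paper's treatment, and its honest localization of the difficulty is valuable, but be aware that as it stands the substantive direction is a citation, not a proof; if you want a genuinely independent argument you would need to carry out the spanning/nonsingularity claim (or the dimension count against an explicit family of iterated areas) that you only sketch.
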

%\todonotes{RP: Example: Higher order signed volumes and the associated maps}

\begin{example}
 We have that \cite[Equation (4)]{bib:DR2018}
 \begin{equation*}
  \Inv_n:=\sum_{\sigma\in S_n}\sign(\sigma)\,\sigma(\word{1})\cdots\sigma(\word{i}),
 \end{equation*}
 where we interpret $\sigma$ as a permutation of the letters, is in $\areatortkara$ for $d\geq n\geq 2$ by Lemma \ref{lem:AequalsA}, an element which plays an important role as the lowest order $\mathsf{SL}$ invariant component of the signature in dimension $d=n$, see \cite[Section 3.3]{bib:DR2018}, and can be interpreted as the $d=n$ dimensional signed volume of the path underlying the signature. 
 In particular, we recover
 \begin{align*}
  \Inv_2&=\area(\word{1},\word{2}),\\
  \Inv_3&=\vol(\word{1},\word{2},\word{3}),
 \end{align*}
 and in fact this can be generalized by defining the multilinear map
 \begin{equation*}
  \vol^n:T^{\geq 1}(\R^d)^n\to T^{\geq 1}(\R^d)
 \end{equation*}
 such that $\vol^n(a_1,\ldots,a_n)$ is the image of $\Inv_n$ under the unique Zinbiel homomorphism (unique due to freeness of the halfshuffle algebra as a Zinbiel algebra) that maps $\word{i}\mapsto a_i$ for $\word{i}=\word{1},\ldots,\word{n}$.
 Written out, this means
 \begin{equation*}
  \vol^n(a_1,\ldots,a_n)=\sum_{\sigma\in S_n}\sign(\sigma)\,(((a_{\sigma(1)}\hs a_{\sigma(2)})\hs a_{\sigma(3)})\hs\cdots)\hs a_{\sigma(n)}.
 \end{equation*}

 Through the fact that $\Inv_n\in\areatortkara$ for $d=n$, it is immediate that we obtain the restriction
 \begin{equation*}
  \vol^n:\areatortkara^n\to\areatortkara
 \end{equation*}
 for any $d\geq 2$.
\end{example}

We note the following conjecture, which was shown to hold true in the case $d=2$ in \cite[Section 6]{bib:DIM2018}
as well as in \cite[Section 3.2, Theorem 31]{bib:Rei2018}. The case $d\ge 3$ is still open.
%\todonotes{double check formulation here}
\begin{conjecture}\label{conj:arealb}
  $\areatortkara$ is linearly generated by strict left-bracketings of the $\area$ operation.
  In particular, a linear basis for $\areatortkara$ (without the single letters)
  is given by
  \begin{align*}
    \area( \area( \area(\word{i}_1,\word{i}_2), \word{i}_3), .., \word{i}_n), \qquad n \ge 2, \word{i}_1, \dots,\word{i}_n \in \{\word1, \dots, \word{d} \}, \word{i}_1 < \word{i}_2.
  \end{align*}
\end{conjecture}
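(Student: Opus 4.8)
The plan is to reduce the conjecture to a linear-independence statement by a dimension count, and then to attack that independence by a leading-term (triangularity) argument organized along anagram classes, the base case $d=2$ being already available from \cite{bib:DIM2018} and \cite[Theorem~31]{bib:Rei2018}.

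\textbf{Step 1 (dimension count and reduction).} By Lemma~\ref{lem:AequalsA}, for $n\ge 2$ the homogeneous component $\proj_n\areatortkara$ is spanned by $\{\,w(\word{ij}-\word{ji}) : |w|=n-2,\ \word i<\word j\,\}$, and these are linearly independent since distinct choices of $(w,\{\word i,\word j\})$ have disjoint word supports; hence $\dim\proj_n\areatortkara=\binom{d}{2}\,d^{\,n-2}$. The strict left-bracketings $L(\word{i}_1,\dots,\word{i}_n):=\area(\cdots\area(\word{i}_1,\word{i}_2),\dots,\word{i}_n)$ with $\word{i}_1<\word{i}_2$ and $\word{i}_3,\dots,\word{i}_n$ arbitrary also number exactly $\binom{d}{2}\,d^{\,n-2}$, and lie in $\proj_n\areatortkara$ by construction. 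The two assertions of the conjecture are therefore equivalent, and each is equivalent to linear independence of the $L(\word{i}_1,\dots,\word{i}_n)$; it suffices to prove independence (or, dually, spanning).

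\textbf{Step 2 (anagram decomposition and a leading-term map).} Since $\area$ preserves the multiset of letters, each $L(\word{i}_1,\dots,\word{i}_n)$ lies in the anagram subspace of its letter multiset, and both the bracketings and the basis $\{w(\word{ij}-\word{ji})\}$ split along these subspaces, with matching cardinalities in each class. Independence may thus be checked one anagram class at a time. I would run an induction on $n$ using the recursion
\[
  L(\word{i}_1,\dots,\word{i}_n) = L(\word{i}_1,\dots,\word{i}_{n-1})\conc\word{i}_n - \word{i}_n\hs L(\word{i}_1,\dots,\word{i}_{n-1}),
\]
(where I used $a\hs\word{j}=a\conc\word{j}$ for a single letter): the first term appends $\word{i}_n$, the second inserts $\word{i}_n$ into every non-final slot. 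In degree three this gives
\[
  L(\word{i}_1,\word{i}_2,\word{i}_3) = \word{i}_1(\word{i}_2\word{i}_3-\word{i}_3\word{i}_2) - \word{i}_2(\word{i}_1\word{i}_3-\word{i}_3\word{i}_1) - \word{i}_3(\word{i}_1\word{i}_2-\word{i}_2\word{i}_1),
\]
and the assignment $(\word{i}_1,\word{i}_2,\word{i}_3)\mapsto \word{i}_3(\word{i}_1\word{i}_2-\word{i}_2\word{i}_1)$ is a bijection onto the degree-three basis with diagonal coefficient $-1$. The goal is to produce such a distinguished \emph{leading basis element} for every $L(\word{i}_1,\dots,\word{i}_n)$, and to order the basis so that the resulting change-of-basis matrix is triangular with nonzero diagonal, which would force independence and close the induction.

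\textbf{Main obstacle.} The difficulty is entirely in the shuffle term $\word{i}_n\hs L(\word{i}_1,\dots,\word{i}_{n-1})$ once letters repeat, and in particular for $d\ge 3$ once an anagram class contains three or more distinct letters: the inserted copy of $\word{i}_n$ then collides with equal letters already present, the clean ``distinct-letter'' bookkeeping that settles degree three and the whole case $d=2$ breaks down, and it is no longer transparent that the leading-term assignment stays injective---equivalently, that the relevant square matrix per anagram class stays triangular and invertible. Making this combinatorial control uniform over all anagram multisets is exactly the step I expect to be the genuine hurdle, and is why the statement is still open for $d\ge 3$.
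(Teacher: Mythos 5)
This statement is labelled a \emph{conjecture} in the paper, and the paper offers no proof: only the case $d=2$ is known (via \cite[Section~6]{bib:DIM2018} and \cite[Theorem~31]{bib:Rei2018}), and the authors state explicitly that $d\ge 3$ is open. So there is no proof of the paper's to compare yours against, and your proposal --- by your own admission in the final paragraph --- does not close the argument either. What you have is a correct and useful framing, not a proof. Your Step~1 is sound: by Lemma~\ref{lem:AequalsA} the elements $w(\word{ij}-\word{ji})$ with $\word i<\word j$ have pairwise disjoint word supports, so $\dim\proj_n\areatortkara=\binom{d}{2}d^{n-2}$ for $n\ge 2$, this matches the count of left-bracketings with $\word{i}_1<\word{i}_2$, and hence spanning, linear independence, and the basis property are all equivalent. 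Your degree-three computation is also correct. But the entire content of the conjecture sits in the step you flag as the ``main obstacle'': showing that the leading-term assignment stays injective (equivalently, that the per-anagram-class matrix is invertible) once an anagram class involves repeated letters or three or more distinct letters. Naming the hurdle is not the same as clearing it.

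It is worth noting that the paper attacks the \emph{dual} side of the same equivalence. Proposition~\ref{prop:imprecisely} (made precise as Proposition~\ref{prop:specialtrees} in the appendix) reduces the spanning statement to rewriting only the special bracketings $\area(\area(\dots,\area(\area(a_1,a_2),a_3),\dots,a_{n-2}),\area(a_{n-1},a_n))$ as left-bracketings --- a tree-rewriting reduction with no analogue in your plan --- while Proposition~\ref{lem:groupAlgebra1} gives the explicit permutation expansion $\arealb{\word{l}_1\cdots\word{l}_n}=\word{l}_1\cdots\word{l}_n\,\theta_n$ with signs $f_n(\sigma)$, which is precisely the tool you would need to make your triangularity argument concrete, and Proposition~\ref{prop:arealblie} records further structure of $\arealb$ found along the way. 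If you pursue your route, I would suggest starting from that expansion: your problem becomes whether the group-algebra elements $\theta_n$ restricted to the relevant anagram classes, projected onto the span of $\{w(\word{ij}-\word{ji})\}$, form an invertible system --- and the paper's examples (e.g.\ $\word{12}(\word{12}-\word{21})=\tfrac16[2\,\area(\area(\area(\word1,\word2),\word1),\word2)-\area(\area(\area(\word1,\word2),\word2),\word1)]$) show that the change-of-basis matrix is genuinely non-triangular already in small cases, so a pure leading-term argument in the naive sense will not suffice without a cleverer ordering or a different invariant.
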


\begin{example}
  For example, with $d=2$, the tensor $\word{12}(\word{12}-\word{21})$, which is in $\areatortkara$, can be written as
  \begin{align*}
    \word{12}(\word{12}-\word{21})=\tfrac{1}{6}[2\,\area(\area(\area(\word1,\word2),\word1),\word2)-\area(\area(\area(\word1,\word2),\word2),\word1)].
  \end{align*}
\end{example}

It turns out that for a bilinear, antisymmetric operation,
showing that all bracketings
can be
rewritten as linear combination of left-bracketings
reduces to showing that this is possible for a small subset of bracketings.
We have not been able to show that this subset of bracketings can be rewritten,
but want to record this general fact nonetheless.
We formulate the statement imprecisely here,
and leave the exact statement Proposition \ref{prop:specialtrees} and its proof to the appendix.
\begin{proposition}
  \label{prop:imprecisely}
  To be able to rewrite
  any bracketing to a linear combination of left-brackets,
  it is enough to verify this for
  bracketings of the form
  \begin{align*}
    \area(\area(\dots,\area(\area(a_1,a_2),a_3),\dots, a_{n-2}), \area(a_{n-1},a_n)),
    \quad a_i\in\areatortkara.
  \end{align*}
\end{proposition}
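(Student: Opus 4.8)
The plan is to prove the (imprecise) statement by induction on the number of leaves of the bracketing, invoking the stated special-form reduction as a black box for exactly one of the two cases that arise. Throughout, write a general bracketing as $\tau = \area(\tau_L, \tau_R)$ according to the two subtrees hanging off the root, and recall that a strict left-bracket is precisely a tree every right child of which is a leaf, equivalently one of the shape $\area(\area(\dots \area(a_1, a_2), \dots), a_k)$. The only structural tool the induction needs is bilinearity: once an element is known to be a linear combination of left-brackets, it may be substituted termwise into either argument of $\area$. The base cases ($\tau$ a single letter, or $\tau = \area(\word i, \word j)$) are themselves left-brackets, so there is nothing to prove.

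For the inductive step I would distinguish whether the right subtree $\tau_R$ is a leaf. If $\tau_R = b$ is a leaf, then the induction hypothesis applied to $\tau_L$ (which has strictly fewer leaves) gives $\tau_L = \sum_i c_i L_i$ with each $L_i$ a left-bracket; by bilinearity $\tau = \sum_i c_i \area(L_i, b)$, and each $\area(L_i, b)$ is again a left-bracket, since appending a leaf on the right preserves the left-comb shape. If instead $\tau_R$ is not a leaf, write $\tau_R = \area(\rho_1, \rho_2)$ with $\rho_1, \rho_2 \in \areatortkara$. Applying the induction hypothesis to $\tau_L$ together with bilinearity, it suffices to treat the case in which $\tau_L$ is a single left-bracket $\area(\area(\dots \area(a_1, a_2), \dots), a_k)$. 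But then $\tau = \area(\area(\dots \area(a_1, a_2), \dots, a_k), \area(\rho_1, \rho_2))$ is exactly a bracketing of the special form, with the $a_i$ and $\rho_1, \rho_2$ arbitrary elements of $\areatortkara$; by the hypothesis of the proposition it is a linear combination of left-brackets. This closes the induction.

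The one point requiring care is the asymmetric role of the two subtrees: the induction hypothesis is only ever applied to the left subtree $\tau_L$, in order to force it into left-comb shape, which is what the special form demands, whereas the right subtree is left untouched as an area $\area(\rho_1, \rho_2)$ of two arbitrary elements, matching the special form verbatim. In particular no reduction of $\tau_R$ is ever attempted, which is what keeps the well-founded order (number of leaves of $\tau_L$) strictly decreasing and avoids circularity.

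Consequently the \emph{hard part} is not located in this meta-reduction at all: bilinearity together with the leaf versus non-leaf dichotomy suffices, and antisymmetry is not even invoked. The genuine difficulty is deferred entirely to the separate verification that the special-form bracketings can be rewritten as left-brackets, which is precisely the content isolated for the appendix (and it is there, rather than here, that antisymmetry and the Tortkara identity must do the real work).
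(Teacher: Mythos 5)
There is a genuine gap in your non-leaf case, located precisely at the words ``by the hypothesis of the proposition it is a linear combination of left-brackets. This closes the induction.'' The special-form hypothesis, as made precise in the appendix, rewrites $\area(\area(\dots\area(a_1,a_2),\dots,a_{n-2}),\area(a_{n-1},a_n))$ as a linear combination of left-combs of the \emph{same} $n$ arguments, merely permuted. You invoke it with $a_{n-1}=\rho_1$ and $a_n=\rho_2$ equal to nontrivial subtrees, so the output consists of terms $\area(\dots\area(b_{\sigma(1)},b_{\sigma(2)}),\dots,b_{\sigma(n)})$ in which $\rho_1$ and $\rho_2$ still sit as unexpanded internal subtrees. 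These are left-combs over a mixed alphabet of letters and trees, not left-bracketings of the original letters, so the induction does not close. A concrete instance: for $\tau=\area(\word1,\area(\area(\word2,\word3),\area(\word4,\word5)))$ your step applies the level-$3$ special identity (here just antisymmetry) and returns $-\area(\area(\area(\word2,\word3),\area(\word4,\word5)),\word1)$, whose left factor is itself one of the hard special trees rather than a left-comb. Your proposed well-founded order (the number of leaves of $\tau_L$) gives no control over these residual terms, because the unreduced material lives in $\tau_R$, which your argument by design never touches.

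The paper's proof of the precise version closes exactly this loop, and the machinery you omit is what does the work. It (i) uses (anti)symmetry to arrange that the left subtree has at least as many leaves as the right one, (ii) regroups leaves into ``super-letters'' --- the pair $(a_1,a_2)$ becomes a single argument, and all of $\tau_R$ except its last leaf becomes a single argument --- so that the outer induction hypothesis, which holds for arbitrary elements of $V$, applies to the \emph{whole} tree at a strictly smaller argument count, and (iii) runs a secondary induction on $\ell$, the number of leaves of $\tau_R$: inspecting the rightmost slot of each resulting left-comb lands one either in the trivial case $\ell=1$, in the special case $\ell=2$, or in a configuration whose right subtree has $\ell-1$ leaves. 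Your remark that antisymmetry is never invoked is therefore a symptom rather than a feature: without it, and without an inner induction that shrinks the right subtree, there is no mechanism for eliminating the composite leaves $\rho_1,\rho_2$ that your final step leaves behind.
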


%Independently, in XXX, a proof for the case $d=2$ was given,
%while simultaneously showing that strict left-bracketings of $\area$ are enough
%to reach all of $\areatortkara$.

%The operation $\area$
%has been studied in \cite{bib:DIM2018}
%under the name of \emph{tortkara algebra}.
%%
%It was shown that the linear subspace of $\TS$ obtained
%by starting from letters and repeatedly applying the $\area$-operation,
%equals the space $w (ij - ji)$, $w \in \TS$, $i,j$ letters.

%The fact that left-bracketings are enough is also shown in \cite[Lemma XXX]{bib:DIM2018},
%but also only for the case $d=2$.
%The case $d\ge 3$ is hence still open.

While trying to find a proof for Conjecture \ref{conj:arealb} for $d\ge 3$, we investigated in detail the operator $\overleftarrow{\area}$ given by the following definition.  

\begin{definition}
If $w=\word{l}_1\dots \word{l}_n$ is a word, we define $\arealb{w}$ to be the left-bracketing expression \index{areal@$\arealb$}
\[\area(\dots\area(\area(\area(\word{l}_1,\word{l}_2),\word{l}_3),\word{l}_4),\dots,\word{l}_n).\]
This is expanded linearly to an operation on the tensor algebra with $\arealb{e}=0$ and $\arealb{\word{l}}=\word{l}$ for any letter $\word{l}$. %no need - Rosa: I think it makes sense to define it on the whole tensor algebra.
\end{definition}

We came across some interesting properties. 

First, we show that there is an expansion formula for $\arealb{w}$ in terms of
permutations of the letters in the word $w$. To this end, define the
right action of a permutation $\sigma\in S_n$ on words of length $n$ as
\begin{align*}
   \sigma := \word{l}_{\sigma(1)} .. \word{l}_{\sigma(n)},
\end{align*}
where $w=\word{l}_1\cdots\word{l}_n$.
%Note that this is indeed a group action, since
%\begin{align*}
%  (\sigma \tau)(i) = \sigma( \tau(i) ).
%\end{align*}
%i.e.\ compatibility with group multiplication in $S_n$.

\begin{proposition}
  \label{lem:groupAlgebra1}
 We have
 \begin{equation*}
  \arealb{\word{l}_1\cdots \word{l}_n}
  =
  %\sum_{\sigma\in S_n}\, f_n(\sigma)\,l_{\sigma(1)}\cdots l_{\sigma(n)},
  \word{l}_1\cdots \word{l}_n\ \theta_n,
 \end{equation*}
 where
 \begin{align*}
   \theta_n
   :=
   \sum_{\sigma\in S_n}\, f_n(\sigma) \sigma
 \end{align*}
 and $f_n:\, S_n\to\{-1,1\}$ is given as
 \begin{equation*}
  f_n(\sigma)=\prod_{i=1}^n g_i(\sigma)
 \end{equation*}
 with
 \begin{equation*}
  g_i(\sigma)=
  \begin{cases}
   +1, &\text{if $\sigma^{-1}(j)<\sigma^{-1}(i)$ for all $j\in\mathbb{N}$ with $j<i$,}\\
   -1, &\text{else.}
  \end{cases}
 \end{equation*}
\end{proposition}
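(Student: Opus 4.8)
The plan is to induct on $n$, exploiting the recursion $\arealb{\word{l}_1\cdots\word{l}_n}=\area(\arealb{\word{l}_1\cdots\word{l}_{n-1}},\word{l}_n)$ that is built into the definition of the left bracketing. The only analytic input is the effect of a single outermost $\area$ with a letter in the right slot, which I would compute first. For any word $a=a_1\cdots a_m$ and any letter $\word{j}$, the half-shuffle gives $a\hs\word{j}=a\word{j}$ (the factor $b_1\cdots b_{n-1}$ is empty) and $\word{j}\hs a=(\word{j}\shuffle a_1\cdots a_{m-1})\conc a_m=\sum_{k=0}^{m-1}a_1\cdots a_k\,\word{j}\,a_{k+1}\cdots a_m$, so that
\begin{equation*}
\area(a,\word{j})=a\word{j}-\sum_{k=0}^{m-1}a_1\cdots a_k\,\word{j}\,a_{k+1}\cdots a_m .
\end{equation*}
In words: $\area(\cdot,\word{j})$ inserts $\word{j}$ into every gap of $a$, with coefficient $+1$ when $\word{j}$ is appended at the very end and $-1$ otherwise. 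Everything else is bookkeeping over $S_n$.

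Next I would set up the induction. The base case $n=1$ is immediate: $\arealb{\word{l}_1}=\word{l}_1$ and $\theta_1=\mathrm{id}$ with $f_1(\mathrm{id})=g_1(\mathrm{id})=1$, the condition defining $g_1$ being vacuous. Assuming $\arealb{\word{l}_1\cdots\word{l}_{n-1}}=\sum_{\tau\in S_{n-1}}f_{n-1}(\tau)\,\word{l}_{\tau(1)}\cdots\word{l}_{\tau(n-1)}$, I apply $\area(\cdot,\word{l}_n)$ termwise and use the displayed formula: each permuted word $\word{l}_{\tau(1)}\cdots\word{l}_{\tau(n-1)}$ produces the $n$ words obtained by inserting $\word{l}_n$ into its $n$ gaps, signed $+1$ for the last gap and $-1$ otherwise. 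I would organise the result through the bijection $S_n\cong S_{n-1}\times\{1,\dots,n\}$ sending $\sigma$ to the pair $(\tau_\sigma,p)$, where $p=\sigma^{-1}(n)$ is the position of the entry $n$ and $\tau_\sigma\in S_{n-1}$ records the relative order of $1,\dots,n-1$ in $\sigma$ (that is, $\sigma$ with the entry $n$ deleted). Each $\sigma\in S_n$ arises exactly once this way, so it remains only to match coefficients.

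The coefficient $\sigma$ receives is $f_{n-1}(\tau_\sigma)\cdot\epsilon_p$, where $\epsilon_p=+1$ if $p=n$ and $\epsilon_p=-1$ otherwise, so I must verify $f_n(\sigma)=f_{n-1}(\tau_\sigma)\,\epsilon_p$. Two facts suffice. First, $g_i(\sigma)=g_i(\tau_\sigma)$ for every $i\le n-1$: the defining condition of $g_i$ compares only the positions of the letters $1,\dots,i$, all of which are $<n$, and deleting the entry $n$ preserves their relative order; hence $\prod_{i=1}^{n-1}g_i(\sigma)=f_{n-1}(\tau_\sigma)$. Second, $g_n(\sigma)=\epsilon_p$: by definition $g_n(\sigma)=+1$ iff $\sigma^{-1}(j)<\sigma^{-1}(n)$ for all $j<n$, i.e.\ iff every smaller letter precedes $n$, i.e.\ iff $\sigma^{-1}(n)=n$, which is exactly $p=n$. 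Multiplying yields $f_n(\sigma)=\prod_{i=1}^{n}g_i(\sigma)=f_{n-1}(\tau_\sigma)\,\epsilon_p$, completing the induction and hence the proposition.

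The half-shuffle computation and the two sign facts are each short, so I do not expect a genuine obstacle. The point deserving care is the second sign fact together with the interpretation that $g_n$ records the sign at the instant $\word{l}_n$ is placed: inserting $\word{l}_n$ does not alter the relative order of the letters $1,\dots,n-1$ (this is precisely the invariance $g_i(\sigma)=g_i(\tau_\sigma)$ for $i<n$), so the factorisation $f_n(\sigma)=f_{n-1}(\tau_\sigma)\,\epsilon_p$ is clean. Keeping straight the distinction between gaps (insertion positions, indexed $0,\dots,n-1$) and final positions $p=\sigma^{-1}(n)$ in the bijection $S_n\cong S_{n-1}\times\{1,\dots,n\}$ is the main place where a sign or indexing error could creep in.
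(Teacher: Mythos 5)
Your proof is correct and follows essentially the same route as the paper's: induction on $n$ via the recursion $\arealb{\word{l}_1\cdots\word{l}_n}=\area(\arealb{\word{l}_1\cdots\word{l}_{n-1}},\word{l}_n)$, expanding the outer area with a letter in the right slot through the half-shuffle (append with sign $+1$, all other insertions with sign $-1$). The only cosmetic difference is that you make the bijection $S_n\cong S_{n-1}\times\{1,\dots,n\}$ and the sign factorisation $f_n(\sigma)=f_{n-1}(\tau_\sigma)\,g_n(\sigma)$ explicit, whereas the paper absorbs this bookkeeping directly into re-indexing the two sums over $\tilde\sigma\in S_{n+1}$ according to the sign of $g_{n+1}(\tilde\sigma)$.
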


\begin{proof}
 For $n=1$, there is only the identity permutation and $f_1(\id)=g_1(\id)=1$, thus the statement is obviously true.
 For $n=2$, we have $S_2=\{\id,(12)\}$, $f_2(\id)=-f_2((12))=1$ and
 \begin{equation*}
  \area(\word{l}_1 \word{l}_2)=\word{l}_1 \word{l}_2-\word{l}_2 \word{l}_1=f_2(\id)\word{l}_1 \word{l}_2+f_2((12))\word{l}_2 \word{l}_1.
 \end{equation*}
 Assume the statement holds for some $n\in\mathbb{N}\setminus\{1\}$. Then,
 \begin{align*}
  &\arealb{\word{l}_1\cdots \word{l}_{n+1}}=\area(\arealb{\word{l}_1\cdots \word{l}_n},\word{l}_{n+1})\\
  &=\sum_{\sigma\in S_n}\,f_n(\sigma)\,\word{l}_{\sigma(1)}\cdots \word{l}_{\sigma(n)}\word{l}_{n+1}-\sum_{\sigma\in S_n}\, f_n(\sigma)\,\word{l}_{n+1} \hs(\word{l}_{\sigma(1)}\cdots \word{l}_{\sigma(n)})\\
  &=\sum_{\substack{\tilde\sigma\in S_{n+1}:\\\,g_{n+1}(\tilde\sigma)=1}}\,f_{n+1}(\tilde\sigma)\,\word{l}_{\tilde\sigma(1)}\cdots \word{l}_{\tilde\sigma(n+1)}
  -\sum_{\sigma\in S_n}\, f_n(\sigma)\,\big(\word{l}_{n+1}\shuffle(\word{l}_{\sigma(1)}\cdots \word{l}_{\sigma(n-1)})\big) \word{l}_{\sigma(n)}\\
  &=\sum_{\substack{\tilde\sigma\in S_{n+1}:\\\,g_{n+1}(\tilde\sigma)=1}}\,f_{n+1}(\tilde\sigma)\,\word{l}_{\tilde\sigma(1)}\cdots \word{l}_{\tilde\sigma(n+1)}
  +\sum_{\substack{\tilde\sigma\in S_{n+1}:\\g_{n+1}(\tilde\sigma)=-1}}\, f_{n+1}(\tilde\sigma)\,\word{l}_{\tilde\sigma(1)}\cdots \word{l}_{\tilde\sigma(n+1)}\\
  &=\sum_{\tilde\sigma\in S_{n+1}}\, f_{n+1}(\tilde\sigma)\,\word{l}_{\tilde\sigma(1)}\cdots \word{l}_{\tilde\sigma(n)}.\qedhere
 \end{align*}
\end{proof}

We furthermore have the following surprising identity.
\begin{proposition}
\label{prop:arealblie}
 For all integers $n>2$, we have
 \begin{equation}\label{eq:arealblie}
  \arealb{\word{1}\,l(\word{2}\cdots\word{n})}=\word{1}\,\arealb{l(\word{2}\cdots\word{n})},
 \end{equation}
 where $l$ is the left Lie bracketing. This implies (by freeness of the half-shuffle algebra)
 \begin{equation*}%\label{eq:arealblie2}
  \arealb{v\,l(w)}=\arealb{v}\,\arealb{l(w)}
 \end{equation*}
 for words $v,w$ such that $|w|\geq 2$.
\end{proposition}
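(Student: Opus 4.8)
The plan is to prove the following uniform strengthening, of which \eqref{eq:arealblie} is the case $|v|=1$ (note $\arealb{\word{1}}=\word{1}$): for every nonempty word $v$ and every word $w$ with $|w|\ge 2$,
\begin{equation}
  \arealb{v\,l(w)}=\arealb{v}\,\arealb{l(w)},\tag{GS}
\end{equation}
where $l$ is the left Lie bracketing as in the statement. Proving this prefix-uniform version is the crucial move: \eqref{eq:arealblie} alone does not close under induction, because peeling off the outermost Lie bracket of $l(w)$ forces one to prepend additional letters, so the single prefix letter $\word{1}$ unavoidably grows into longer words $v$.

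Two elementary facts drive everything, together with the defining recursion $\arealb{X\word{c}}=\area(\arealb{X},\word{c})$ (valid for nonempty $X$, extended linearly) and $\area(X,\word{c})=X\word{c}-\word{c}\hs X$ for a letter $\word{c}$. The first is that the half-shuffle by a single letter is a derivation of the concatenation product: for any $A$ and any nonempty $B$,
\begin{equation}
  \word{c}\hs(AB)=(\word{c}\hs A)\,B+A\,(\word{c}\hs B),\tag{$\heartsuit$}
\end{equation}
which follows immediately from $\word{c}\hs(x_1\cdots x_r)=(\word{c}\shuffle x_1\cdots x_{r-1})x_r$ by sorting the insertions of $\word{c}$ according to whether they land in the $A$-block or the $B$-block. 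The second is the Zinbiel identity of the half-shuffle, $x\hs(y\hs z)=(x\shuffle y)\hs z$, whose right-hand side is manifestly symmetric in $x,y$.

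I would induct on $m:=|w|$, uniformly over all prefixes $v$. For the base case $m=2$, write $l(\word{c}_1\word{c}_2)=\word{c}_1\word{c}_2-\word{c}_2\word{c}_1$ and apply the recursion twice to get $\arealb{v\,l(\word{c}_1\word{c}_2)}=\area(\area(\arealb{v},\word{c}_1),\word{c}_2)-\area(\area(\arealb{v},\word{c}_2),\word{c}_1)$; expanding via $\area(X,\word{c})=X\word{c}-\word{c}\hs X$, the asymmetric terms combine using $(\heartsuit)$ (with $B$ a letter), while the doubly-nested terms $\word{c}_2\hs(\word{c}_1\hs\arealb{v})=(\word{c}_2\shuffle\word{c}_1)\hs\arealb{v}$ cancel by symmetry of the Zinbiel identity, collapsing everything to $2\,\arealb{v}\,(\word{c}_1\word{c}_2-\word{c}_2\word{c}_1)=\arealb{v}\,\arealb{l(\word{c}_1\word{c}_2)}$. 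For the inductive step $m\ge 3$, set $w=w'\word{c}$ with $|w'|=m-1\ge 2$ and use $l(w)=l(w')\word{c}-\word{c}\,l(w')$. Stripping the trailing $\word{c}$ from the words ending in it yields $\arealb{v\,l(w)}=\area(\arealb{v\,l(w')},\word{c})-\arealb{(v\word{c})\,l(w')}$; the three inductive hypotheses $\mathrm{GS}(v,w')$, $\mathrm{GS}(v\word{c},w')$ and $\mathrm{GS}(\word{c},w')$ (all at level $m-1$) then rewrite every bracketing in terms of $A:=\arealb{v}$ and $B:=\arealb{l(w')}$, reducing the desired equality $\arealb{v\,l(w)}=\arealb{v}\,\arealb{l(w)}$ to $\area(AB,\word{c})-\area(A,\word{c})B=A\,\area(B,\word{c})-A\word{c}B$, which after cancellation is precisely $\word{c}\hs(AB)=(\word{c}\hs A)B+A(\word{c}\hs B)$, i.e. $(\heartsuit)$.

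This establishes GS for all $v$ and all $|w|\ge2$; in particular the case $v=\word{1}$ is \eqref{eq:arealblie}, and GS itself is the second displayed identity of the proposition. One may alternatively read the passage from the generator case to general $v$ through the freeness of the half-shuffle (Zinbiel) algebra, since $\area$ and hence $\arealb{}$ intertwine Zinbiel homomorphisms; this is the conceptual counterpart of the hands-on induction above. The main obstacle is conceptual rather than computational: one must recognize that \eqref{eq:arealblie} has to be upgraded to the prefix-uniform statement GS before any induction can succeed. Once this is done, the sole nontrivial ingredient is the derivation property $(\heartsuit)$, and the remaining bookkeeping is routine.
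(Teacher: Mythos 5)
Your proof is correct and follows essentially the same route as the paper: induction on $|w|$ via $l(w'\word{c})=l(w')\word{c}-\word{c}\,l(w')$, with the Leibniz rule $\word{c}\hs(AB)=(\word{c}\hs A)B+A(\word{c}\hs B)$ as the single nontrivial ingredient --- exactly the identity the paper invokes at the corresponding step. The only organizational difference is that you carry an arbitrary prefix $v$ through the induction, whereas the paper inducts on the single-letter case \eqref{eq:arealblie} and passes to general $v$ (including the two-letter prefix $\word{1}\word{i}$ it needs inside its own inductive step) by the freeness of the Zinbiel algebra; your variant is marginally more self-contained on that point.
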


%%%%%%%%%%%%%%%%%%%%%%%%%%%%%%%%%%%%%%%%%%%%%%%%%%
% area( L_1 L_2 ) = area(L_1) area(L_2)
% area( [L_1, L_2] ) = [area(L_1), area(L_2)]
\begin{remark}
 Note that due to the well known fact that strict left Lie bracketings linearly generate the free Lie algebra, more generally formulated, it holds that
 \begin{equation*}
  \arealb{v x}=\arealb{v}\,\arealb{x}
 \end{equation*}
 for any $v\in\TS$ and any Lie polynomial $x$
 with $\langle x, \word{i} \rangle = 0$ for all letters $\word{i}$.
\end{remark}

\begin{remark}
 In particular, we have
 \begin{equation*}
  \arealb{\word{l}_1\cdots \word{l}_n \word{l}_{n+1}}-\arealb{\word{l}_1\cdots \word{l}_{n+1} \word{l}_n}=2\ \arealb{\word{l}_1\cdots \word{l}_{n-1}}(\word{l}_n\word{l}_{n+1}-\word{l}_{n+1}\word{l}_n)
 \end{equation*}
for any letters $\word{l}_1,\dots,\word{l}_{n+1}$.
\end{remark}
\begin{proof}
 For the base case, we compute
  \begin{equation*}
      \arealb{\word{123}-\word{132}}
      = 2 (\word{123}-\word{132})
      =\word{1} \arealb{\word{23}-\word{32}}.
    \end{equation*}

 Assume $\eqref{eq:arealblie}$ holds for some integer $n>2$ and let $w$ be a word of length $n-1$. Then, for any letter $\word{i}$,
 \begin{align*}
  \arealb{\word{1}\,l(w\word{i})}=&\arealb{\word{1}l(w)\word{i}-\word{1}\word{i}l(w)}=\arealb{\word{1}l(w)\word{i}}-\arealb{\word{1}\word{i}}\,\arealb{l(w)}\\
  =&\arealb{\word{1}l(w)}\word{i}-\word{i}\hs\arealb{\word{1}l(w)}-\arealb{\word{1}\word{i}}\,\arealb{l(w)}\\
  =&\word{1}\,\arealb{l(w)}\word{i}-\word{i}\hs(\word{1}\,\arealb{l(w)})-\word{1}\word{i}\,\arealb{l(w)}+\word{i}\word{1}\,\arealb{l(w)}\\
  =&\word{1}\,\arealb{l(w)}\word{i}-\word{1}(\word{i}\hs\arealb{l(w)})-\word{1}\word{i}\,\arealb{l(w)}\\
  =&\word{1}\,\arealb{l(w)\word{i}}-\word{1}\word{i}\,\arealb{l(w)}\\
  =&\word{1}\,\arealb{l(w)\word{i}}-\word{1}\,\arealb{\word{i}l(w)}\\
  =&\word{1}\,\arealb{l(w\word{i})},
 \end{align*}
 where we used that $\word{i}\hs(\word{1}\,\arealb{l(w)})=\word{i}\word{1}\,\arealb{l(w)}+\word{1}(\word{i}\hs\arealb{l(w)})$ due to the combinatorial expansion formula for the half-shuffle.
\end{proof}

Interestingly, $\rho$ admits a permutation expansion which is quite similar to that of $\arealb$, in fact again via $f_n$, just that now only a subset $T_n$ of all permutations $S_n$ is involved.
\begin{proposition}
\label{prop:rhogroupalgebra}
 We have
 \begin{equation*}
  \rho(\word{l}_1\cdots \word{l}_n)
  =
  %\sum_{\sigma\in S_n}\, f_n(\sigma)\,\word{l}_{\sigma(1)}\cdots \word{l}_{\sigma(n)},
  \word{l}_1\cdots \word{l}_n\ \vartheta_n,
 \end{equation*}
 where
 \begin{align*}
   \vartheta_n
   :=
   \sum_{\sigma\in T_n}\, f_n(\sigma) \sigma,
 \end{align*}
 $f_n:\, S_n\to\{-1,1\}$ is as in Lemma \ref{lem:groupAlgebra1} and $T_n$ is the set of all $\sigma\in S_n$ such that $\{\sigma(i),\ldots,\sigma(n)\}$ is an interval of integers (a set of the form $[a,b]\cap \mathbb{N}$)
 for all $i\in\{1,\ldots,n-1\}$.
%  \begin{equation*}
%   f_n(\sigma)=\prod_{i=1}^n g_i(\sigma)
%  \end{equation*}
%  with
%  \begin{equation*}
%   g_i(\sigma)=
%   \begin{cases}
%    +1, &\text{if $\sigma^{-1}(j)<\sigma^{-1}(i)$ for all $j\in\mathbb{N}$ with $j<i$,}\\
%    -1, &\text{else.}
%   \end{cases}
%  \end{equation*}
\end{proposition}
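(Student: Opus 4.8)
The plan is to prove the identity by induction on $n$, driving the induction with the recursion \eqref{eq:rhorecursive}, $\rho(\word{i}w\word{j}) = \word{i}\,\rho(w\word{j}) - \word{j}\,\rho(\word{i}w)$, in the same spirit as the proof of Proposition \ref{lem:groupAlgebra1}. The base case $n=1$ is immediate, since $\rho(\word{l}_1)=\word{l}_1$ and $T_1=\{\id\}$ with $f_1(\id)=1$. For the inductive step I would write $u=\word{l}_1\cdots\word{l}_n$ with $n\ge2$ as $\word{i}w\word{j}$ where $\word{i}=\word{l}_1$, $\word{j}=\word{l}_n$ and $w=\word{l}_2\cdots\word{l}_{n-1}$, so that the two words $w\word{j}=\word{l}_2\cdots\word{l}_n$ and $\word{i}w=\word{l}_1\cdots\word{l}_{n-1}$ have length $n-1$ and the inductive hypothesis applies to both.

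The combinatorial heart of the argument is a recursive description of $T_n$. First I would observe that $\sigma\in T_n$ forces $\sigma(1)\in\{1,n\}$: the suffix $\{\sigma(2),\dots,\sigma(n)\}=\{1,\dots,n\}\setminus\{\sigma(1)\}$ must be an interval, which happens precisely when $\sigma(1)$ is an endpoint. This splits $T_n$ into two halves. Removing the first letter and relabelling identifies the half with $\sigma(1)=1$ with $\{\mathbf{1}\oplus\tau:\tau\in T_{n-1}\}$, the map fixing $1$ and acting as $\tau$, shifted up by one, on $\{2,\dots,n\}$; and it identifies the half with $\sigma(1)=n$ with the permutations obtained from $\tau\in T_{n-1}$ by placing the value $n$ in the first position and following with $\tau$ on the remaining positions. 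In both cases the surviving conditions on the shorter suffixes are exactly the defining conditions of $T_{n-1}$, so each half is in bijection with $T_{n-1}$, and the two halves exhaust $T_n$ disjointly.

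Next I would verify the matching of the signs $f_n=\prod_i g_i$ under these two embeddings, which I expect to be the main technical step. Tracking the positions $\sigma^{-1}(i)$, for $\sigma=\mathbf{1}\oplus\tau$ one finds $g_1(\sigma)=1$ and $g_i(\sigma)=g_{i-1}(\tau)$ for $i\ge2$ (the comparison against the value $1$ sitting in position $1$ being vacuously satisfied), whence $f_n(\sigma)=f_{n-1}(\tau)$. For the other embedding, the value $n$ sits in position $1$, so $g_n(\sigma)=-1$, while $g_i(\sigma)=g_i(\tau)$ for $i\le n-1$ since the value $n$ never constrains a smaller value; this gives $f_n(\sigma)=-f_{n-1}(\tau)$.

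Finally I would assemble the pieces. Under the first embedding the concatenation $\word{i}\cdot\big((w\word{j})\tau\big)$ equals $u(\mathbf{1}\oplus\tau)$, and under the second $\word{j}\cdot\big((\word{i}w)\tau\big)$ equals the corresponding value-$n$-first permutation applied to $u$. Substituting the inductive hypotheses $\rho(w\word{j})=(w\word{j})\vartheta_{n-1}$ and $\rho(\word{i}w)=(\word{i}w)\vartheta_{n-1}$ into \eqref{eq:rhorecursive}, and using $f_n(\mathbf{1}\oplus\tau)=f_{n-1}(\tau)$ for the first sum and $f_n(\sigma)=-f_{n-1}(\tau)$ for the second (the minus sign of the recursion being absorbed into this sign identity), collects exactly $\sum_{\sigma\in T_n} f_n(\sigma)\,u\sigma=u\,\vartheta_n$, as desired. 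The only place where genuine care is needed is the sign bookkeeping of the $g_i$ under the two embeddings; the remainder is a routine index chase.
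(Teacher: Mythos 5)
Your proof is correct and follows essentially the same route as the paper's: induction on $n$ driven by the recursion \eqref{eq:rhorecursive}, with $T_n$ split according to $\sigma(1)\in\{1,n\}$ and each half identified with $T_{n-1}$, the sign of the recursion being absorbed by $f_n(\sigma)=-f_{n-1}(\tau)$ in the $\sigma(1)=n$ case. You merely make explicit some bookkeeping (the characterization $\sigma(1)\in\{1,n\}$ and the $g_i$ computations) that the paper leaves implicit.
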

\begin{proof}
 For $n=1$, there is only the identity permutation and $f_1(\id)=g_1(\id)=1$, thus the statement is obviously true.
 For $n=2$, we have $T_2=\{\id,(12)\}$, $f_2(\id)=-f_2((12))=1$ and
 \begin{equation*}
  \rho(\word{l}_1 \word{l}_2)=\word{l}_1 \word{l}_2-\word{l}_2 \word{l}_1=f_2(\id)\word{l}_1 \word{l}_2+f_2((12))\word{l}_2 \word{l}_1.
 \end{equation*}
 Assume the statement holds for some $n\in\mathbb{N}\setminus\{1\}$. 
 Then, using the recursive definition of $\rho$ from Equation \eqref{eq:rhorecursive},
 \begin{align*}
  &\rho(\word{l}_1\cdots \word{l}_{n+1})=\word{l}_1\rho(\word{l}_2\cdots \word{l}_{n+1})-\word{l}_{n+1}\rho(\word{l}_1\cdots \word{l}_n)\\
  &=\sum_{\sigma\in T_n}\,f_n(\sigma)\,\word{l}_1 l'_{\sigma(1)}\cdots l'_{\sigma(n)}-\sum_{\sigma\in T_n}\, f_n(\sigma)\,\word{l}_{n+1}\word{l}_{\sigma(1)}\cdots \word{l}_{\sigma(n)}\\
  %&=\sum_{\substack{\tilde\sigma\in S_{n+1}:\\\,g_{n+1}(\tilde\sigma)=1}}\,f_{n+1}(\tilde\sigma)\,\word{l}_{\tilde\sigma(1)}\cdots \word{l}_{\tilde\sigma(n+1)}
  %-\sum_{\sigma\in S_n}\, f_n(\sigma)\,\big(\word{l}_{n+1}\shuffle(\word{l}_{\sigma(1)}\cdots \word{l}_{\sigma(n-1)})\big) \word{l}_{\sigma(n)}\\
  &=\sum_{\substack{\tilde\sigma\in T_{n+1}:\\\tilde\sigma(1)=1}}\,f_{n+1}(\tilde\sigma)\,\word{l}_{\tilde\sigma(1)}\cdots \word{l}_{\tilde\sigma(n+1)}
  +\sum_{\substack{\tilde\sigma\in T_n:\\\tilde\sigma(1)=n+1}}\, f_{n+1}(\tilde\sigma)\,\word{l}_{\tilde\sigma(1)}\cdots \word{l}_{\tilde\sigma(n+1)}\\
  &=\sum_{\tilde\sigma\in T_{n+1}}\, f_{n+1}(\tilde\sigma)\,\word{l}_{\tilde\sigma(1)}\cdots \word{l}_{\tilde\sigma(n)},
 \end{align*}
 where $l'_i=\word{l}_{i+1}$.
\end{proof}

Via the recursive formula for $\rho$, we also get an alternative proof of the following.

\begin{corollary}
 \label{cor:rhoimageareas}
 $\Im\rho\subset\areatortkara$
\end{corollary}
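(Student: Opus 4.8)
The plan is to prove the equivalent statement that $\rho(v) \in \areatortkara$ for every word $v$, by induction on the length $|v|$, using the recursion \eqref{eq:rhorecursive} together with the explicit description of $\areatortkara$ supplied by Lemma \ref{lem:AequalsA}. By that lemma the part of $\areatortkara$ supported on words of length at least two is exactly
\[
  \mathcal B := \spann_\R\{\, w(\word{ij} - \word{ji}) : w \text{ a word},\ \word{i},\word{j} \text{ letters} \,\}.
\]

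The key structural observation, which is the only nontrivial ingredient, is that $\mathcal B$ is closed under left-concatenation by a single letter: for any letter $\word{a}$ one has $\word{a}\conc\big(w(\word{ij}-\word{ji})\big) = (\word{a}w)(\word{ij}-\word{ji}) \in \mathcal B$, and this extends linearly. With this in hand the induction is immediate. The base cases $\rho(\emptyWord)=0$ and $\rho(\word{i})=\word{i}$ lie in $\areatortkara$ by definition, and for length two $\rho(\word{ij}) = \word{ij}-\word{ji} = \emptyWord(\word{ij}-\word{ji}) \in \mathcal B$.

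For the inductive step I would take a word $v$ of length $n \ge 3$ and write it as $v = \word{i}w\word{j}$, where $\word{i}$ and $\word{j}$ are its first and last letters and $w$ has length $n-2 \ge 1$. The recursion \eqref{eq:rhorecursive} then reads $\rho(v) = \word{i}\rho(w\word{j}) - \word{j}\rho(\word{i}w)$. Since $w\word{j}$ and $\word{i}w$ both have length $n-1 \ge 2$, the induction hypothesis puts $\rho(w\word{j})$ and $\rho(\word{i}w)$ in $\mathcal B$; the closure property above then shows that $\word{i}\rho(w\word{j})$ and $\word{j}\rho(\word{i}w)$, and hence their difference $\rho(v)$, lie in $\mathcal B \subset \areatortkara$. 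Since $\Im\rho$ is spanned by the $\rho(v)$, this yields $\Im\rho \subset \areatortkara$.

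I do not anticipate a genuine obstacle: the argument is short once Lemma \ref{lem:AequalsA} is available. The only point requiring care is that the recursion peels off the first and last letters simultaneously, so the middle word $w$ is nonempty precisely when $|v|\ge 3$; this is why the length-two case is handled separately as a base case. One could instead invoke the recursion with empty $w$ for length two, but then the individual summands $\word{ij}$ and $\word{ji}$ are not in $\mathcal B$ and only their antisymmetric combination is, so treating length two as a base case keeps the bookkeeping clean.
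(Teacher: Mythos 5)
Your proposal is correct and follows essentially the same route as the paper: induction on word length via the recursion \eqref{eq:rhorecursive}, combined with the observation from Lemma \ref{lem:AequalsA} that the non-letter part of $\areatortkara$ is stable under left-concatenation. The only cosmetic difference is that you invoke closure under left-concatenation by a single letter, while the paper states the (equally easy) closure under left-concatenation by arbitrary tensors; both suffice.
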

\begin{proof}
 It suffices to show $\rho(w)\in\areatortkara$ for any word $w$. We have $\rho(\emptyWord)=0$, $\rho(\word{i})=\word{i}\in\areatortkara$ and $\rho(\word{i}\word{j})=\word{i}\word{j}-\word{j}\word{i}\in\areatortkara$ for any letters $\word{i},\word{j}$. Let $n\geq 2$ and assume that $\rho(w)\in\areatortkara$ for any word $w$ with $|w|=n$. Then, for any word $v$ with $|v|=n-1$ and any letters $\word{i},\word{j}$, we have
 \begin{equation*}
  \rho(\word{i}v\word{j})=\word{i}\rho(v\word{j})-\word{j}\rho(\word{i}v)\in\areatortkara
 \end{equation*}
 since $\rho(v\word{j}),\rho(\word{i}v)\in\areatortkara$ with $|\rho(v\word{j})|=|\rho(\word{i}v)|=n\geq 2$ due to the induction hypothesis, and by Lemma \ref{lem:AequalsA}, the non-letter part of $\areatortkara$ is stable under concatenation of any element of the tensor algebra from the left. Thus, the induction hypothesis also holds for all words of length $n+1$.
\end{proof}
\section{Conclusion}
\todonotes{RP/JD/JR: Extend conclusion}
We have linked the area operation in the tensor algebra to work in control theory and more abstract work on Tortkara algebras.
We have shown that starting from letters and applying the area operation, one obtains enough elements to shuffle-generate the tensor algebra.

There are many open directions for research.
We have not identified a minimal set of areas-of-areas which is just enough to shuffle-generate the tensor algebra -- i.e.~to shuffle-generate it exactly.
The linear span of the areas-of-areas has been identified, but a basis for it in terms of areas-of-areas has not.

\subsection{Open combinatorial problems}
\begin{enumerate}
 \item What is $\spann\{\area(\word{i}_1\shuffle\cdots\shuffle\word{i}_n,\word{j}_1\shuffle\cdots\shuffle\word{j}_m),\,n,m\in\mathbb{N},\,\word{i}_1,\ldots\word{i}_n,\word{j}_1,\ldots\word{j}_m\,\,\text{letters}\}$? Does it shuffle generate together with the letters, and if not what is the smallest subalgebra of the associative shuffle algebra containing it and the letters? This is the algebraic formulation of the question \enquote{what do we know about a path if we are only allowed to collect its increment and the values of the first area of any two dimensional polynomial image of the path}?
 \item Give linear bases for $\areatortkara\shuffle\areatortkara$, $\areatortkara\shuffle\areatortkara\shuffle\areatortkara$, $\ldots$. Does $\sum_{m=1}^{n}\areatortkara^{\shuffle m}$ already arrive at $\TS$ for a finite $n$?
 \item Give a minimal generating set for $T(\R^d)$ as a Tortkara algebra. Is it free?
 \item In light of Proposition \ref{prop:arealblie}, look at $\gen{x\in\mathfrak{g}:\,\langle x,\word{i}\rangle=0\,\forall\,\word{i}}{\cdot}$ and its image under $\arealb$ 
 \item What are the eigenspaces of $\arealb$?
\end{enumerate}

\appendix
\section{Appendix}

%% We recall, \cite[Section II.11]{bib:Bou1998}, that a \textbf{graded vector space}
%% is ..  \textbf{graded dual} ..
%% 
%% \begin{lemma}
%%   Let $V$ be a graded vector space and $V^\gradedDual$ its graded dual.
%%   %
%%   Then: the graded dual of $\operatorname{Sym}[V]$ is given by $\R[ V ]$.
%% \end{lemma}
%% \begin{proof}
%%   \todonotes{see scratch6.tex}
%% \end{proof}
%% 
%% .. symmetric tensors \cite[Appendix A.2]{bib:Kna2013} ..

\begin{lemma}
  \label{lem:triangularGenerating}
  Let $V = \bigoplus_{n\ge 1} V_n$ be a graded vector space, each $V_n$ finite dimensional, and denote the grading $|.|_V$.

  Consider $\R[V]$, the symmetric algebra over $V$ (see Section \ref{sec:shuffleGenerators}), with two different gradings, defined on monomials as follows
  \begin{itemize}
    \item $|x^m|_{\operatorname{deg}} := m$ (denote the corresponding projection onto degree $1$ by $\proj_1^{\operatorname{deg}}$).
    \item $|x^m|_{\operatorname{weight}} := m\cdot |x|_V$.
  \end{itemize}
  Let $Y \subset \R[V]$, countable, be such that every $y \in Y$ is homogeneous with respect to $|.|_{\operatorname{weight}}$.
  Then:
  \begin{center}
    $Y$ generates $\R[V]$ (as commutative algebra) \\

    \vspace{0.5em}
    \emph{if and only if} \\
    \vspace{0.5em}
    
    $\operatorname{span}_\R \proj_1^{\operatorname{deg}} Y = V$
  \end{center}

  If moreover $\proj_1^{\operatorname{deg}} y$, $y\in Y$, are linearly independent,
  then $Y$ freely generates $\R[V]$ (as commutative algebra).
\end{lemma}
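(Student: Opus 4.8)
The plan is to exploit the two compatible gradings on $\R[V]$ simultaneously: the polynomial \emph{degree} grading $|.|_{\operatorname{deg}}$ and the \emph{weight} grading $|.|_{\operatorname{weight}}$ inherited from $|.|_V$, which together make $\R[V]$ bigraded, $\R[V]=\bigoplus_{m,w}\R[V]^{(m)}_w$. Since $V=\bigoplus_{n\ge1}V_n$ starts in weight $1$ and each $V_n$ is finite dimensional, every weight-homogeneous piece $\R[V]_w$ is finite dimensional (a monomial of weight $w$ has degree at most $w$), its degree-$1$ part is exactly $V_w$, and so $\proj_1^{\operatorname{deg}}$ is weight preserving. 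First I would record the one structural fact that drives everything: writing $\mathfrak m=\bigoplus_{m\ge1}\R[V]^{(m)}$ for the augmentation ideal, the degree $\ge 2$ part $\bigoplus_{m\ge2}\R[V]^{(m)}$ equals $\mathfrak m^2$, so that $\mathfrak m/\mathfrak m^2\cong V$ via $\proj_1^{\operatorname{deg}}$, and within a fixed weight $w$ the degree $\ge 2$ part consists of products of factors of \emph{strictly smaller} weight.

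For the ``only if'' direction I would note that $\proj_1^{\operatorname{deg}}$, although not an algebra map, controls the degree-$1$ part of any product: each $y\in Y$ is weight-homogeneous, so either it is a scalar (with $\proj_1^{\operatorname{deg}}y=0$) or it has positive weight and lies in $\mathfrak m$; any product of two or more elements of $\mathfrak m$ lies in $\mathfrak m^2$, i.e.\ has vanishing degree-$1$ part. Hence for the subalgebra $\langle Y\rangle$ generated by $Y$ one gets $\proj_1^{\operatorname{deg}}\langle Y\rangle\subseteq\spann_\R\proj_1^{\operatorname{deg}}Y$. If $Y$ generates, then $\langle Y\rangle=\R[V]\supseteq V$, and applying $\proj_1^{\operatorname{deg}}$ (which is the identity on $V$) yields $V\subseteq\spann_\R\proj_1^{\operatorname{deg}}Y$; together with the trivial inclusion $\spann_\R\proj_1^{\operatorname{deg}}Y\subseteq V$ this gives the required equality.

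For the ``if'' direction I would run an induction on the weight $w$ to show $\R[V]_w\subseteq\langle Y\rangle$. The degree $\ge 2$ part of $\R[V]_w$ is spanned by products of elements of strictly smaller weight, hence lies in $\langle Y\rangle$ by the inductive hypothesis and closure under multiplication. For the degree-$1$ part $V_w$: using weight homogeneity of the elements of $Y$, the hypothesis $\spann_\R\proj_1^{\operatorname{deg}}Y=V$ localizes to $V_w=\spann_\R\{\proj_1^{\operatorname{deg}}y:y\in Y,\ |y|_{\operatorname{weight}}=w\}$, so any $v\in V_w$ can be written $v=\sum_i c_i\,\proj_1^{\operatorname{deg}}y_i$; then $\sum_i c_i y_i\in\langle Y\rangle$ differs from $v$ only by a degree $\ge 2$, weight-$w$ term, which we have just placed in $\langle Y\rangle$. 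Thus $v\in\langle Y\rangle$, completing the induction and hence the equivalence.

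Finally, for freeness I would combine the two hypotheses: if $\spann_\R\proj_1^{\operatorname{deg}}Y=V$ and the $\proj_1^{\operatorname{deg}}y$ are linearly independent, then $\{\proj_1^{\operatorname{deg}}y:y\in Y\}$ is a weight-homogeneous \emph{basis} of $V$, so the number of $y$ of weight $n$ equals $\dim V_n=:d_n$. The weight-preserving algebra map $\R[x_y:y\in Y]\to\R[V]$, $x_y\mapsto y$ (with $x_y$ given weight $|y|_{\operatorname{weight}}$), is surjective by the equivalence just proved, and both sides have the same Hilbert series $\prod_{n\ge1}(1-t^n)^{-d_n}$ with respect to the weight grading, hence equal finite dimension in each weight. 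A surjection of finite-dimensional spaces of equal dimension is bijective weight by weight, so the map is an isomorphism and $Y$ freely generates. The only points requiring care — and the closest thing to an obstacle — are purely bookkeeping: verifying that $\proj_1^{\operatorname{deg}}$ is weight preserving and that $\mathfrak m^2$ is exactly the degree $\ge2$ part, so that the degree grading is genuinely triangular over the weight filtration; the weight homogeneity of $Y$ is precisely what makes the inductive step localize cleanly to a single weight.
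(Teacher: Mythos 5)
Your proof of the equivalence is correct and follows essentially the same route as the paper: the ``only if'' direction is the observation that products of two or more augmentation-ideal elements have no degree-$1$ component (the paper phrases this as a one-line contrapositive), and the ``if'' direction is the same induction on weight, using weight-homogeneity of $Y$ to localize the spanning hypothesis to a single weight and the triangularity $y = \proj_1^{\operatorname{deg}}y + (\text{degree}\ge 2\text{ terms built from strictly lower weights})$ to conclude. One small difference worth noting: the paper's written proof stops after the equivalence and does not actually argue the freeness claim, whereas you supply it via a Hilbert-series count for the weight-preserving surjection $\R[x_y : y \in Y]\to\R[V]$; this matches the dimension-counting style the paper uses elsewhere (in the freeness part of Lemma \ref{lem:lieAlgebra}) and is a legitimate completion. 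Your bookkeeping points (that $\proj_1^{\operatorname{deg}}$ preserves weight, and that linear independence of the $\proj_1^{\operatorname{deg}}y$ forces every $y$ to have nonzero degree-$1$ part, so the generator count per weight equals $\dim V_n$) are exactly the details needed for that count to go through.
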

\begin{proof}
  We show the first statement.

  $\Rightarrow$:
  Assume $v \in V \subset \R[V]$ is not in the span of
  $\proj_1^{\operatorname{deg}} Y$.
  Then it is clearly not in the algebra generated by $Y$. Hence $Y$ does not generate $\R[V]$.
  This proves the contrapositive.

  $\Leftarrow$:
  Denote, local to this proof, by $\langle M \rangle$ the subalgebra generated by $M\subset \R[V]$.

  Claim: $\langle V_1 \rangle \subset \langle Y \rangle$.
  Indeed, $v \in V_1$ can, by assumption
  be written as linear combination of some
  \begin{align*}
    \proj_1^{\operatorname{deg}} y_i,
  \end{align*}
  where $y_i \in Y$. Since the $y_i$ are homogeneous they must be of weight $1$.
  Hence $\proj_1^{\operatorname{deg}} y_i = y_i$,
  hence $V_1 \subset \langle Y \rangle$,
  hence $\langle V_1 \rangle \subset \langle Y \rangle$, which proves the claim.

  Now let $\langle V_1 \oplus \dots \oplus V_n \rangle \subset \langle Y \rangle$.
  Claim: $V_{n+1} \subset \langle Y \rangle$.
  Indeed, $v \in V_{n+1}$ can be written as linear combination of some
  \begin{align*}
    \proj_1^{\operatorname{deg}} y_i,
  \end{align*}
  where $y_i \in Y$, of weight $n+1$. Then
  \begin{align*}
    y_i = \proj_1^{\operatorname{deg}} y_i + r_i,
  \end{align*}
  with $r_i$ monomials (of order $2$ an higher)
  in terms from $V_1 \oplus \dots \oplus V_n$, i.e.~$r_i \in \langle V_1 \oplus \dots \oplus V_n \rangle \subset \langle Y \rangle$.
  Hence $v \in \langle Y \rangle$.

  Hence $\langle V_1 \oplus \dots \oplus V_n \oplus V_{n+1} \rangle \subset \langle Y \rangle$.
  Iterating, we see that $\R[ V ] = \langle V \rangle \subset \langle Y \rangle$, which proves the first claim.
  %Regarding freeness:
  %assume $0$ can be written as a polynomial expression in elements of $Y$.
  %In particular the projection onto degree $1$ must be zero.
  %\todonotes{ todo }
\end{proof}

We finally give a precise statement and proof of \Cref{prop:imprecisely}

%Let us work on the free
%magma of binary planar rooted trees
%with leaves labeled $1,2,3,\dots$.

%Let us work on the free operad generated
%by one operation of arity $2$.
%magma of binary planar rooted trees

Let $V$ be an $\R$-vector space
and let
\begin{align*}
  \mathfrak B : V \times V \to V,
\end{align*}
be a bilinear map.%
\footnote{This section would be most comfortably be formulated
in the language of operads. But this would require more mathematical setup, which we want to avoid.}
We encode bracketings as planar trees.
%\begin{align*}
%  ..
%\end{align*}
%
Define the complete left-bracketed tree with $n$ leaves as
\newcommand\leftBracketTree{\mathsf{LeftBracketTree}}
\begin{align*}
  \leftBracketTree_1 &:= \Forest{[]} \\
  \leftBracketTree_n &:=
  %(\leftBracketTree_{n-1}, \Forest{[]}), \qquad n \ge 2,
  \leftBracketTree_{n-1}
  \rightarrow_\bullet 
  \Forest{[]},  \qquad n \ge 2,
  %Here $\sigma \rightarrow_\shuffleConcatSymbol \rho$ is the grafting, to a new root of type $\shuffleConcatSymbol$,
\end{align*}
%here $$(\cdot,\cdot)$ denotes grafting to a new root.
her $\rightarrow_\bullet$ denotes grafting to a new root.

Define
\newcommand\specialTree{\mathsf{SpecialTree}}
\begin{align*}
  \specialTree_n :=
  %(\leftBracketTree_{n-2},\leftBracketTree_2).
  \leftBracketTree_{n-2} \rightarrow_\bullet \leftBracketTree_2.
\end{align*}

For example
\begin{align*}
  \specialTree_4 &= \Forest{[ [[],[]], [[],[]] ]} \\
  \specialTree_5 &= \Forest{[ [[ [], [] ],[]], [[],[]] ]} \\
  \specialTree_6 &= \Forest{[ [[ [ [], [] ], [] ],[]], [[],[]] ]}.
\end{align*}

For any tree $\tau$ with $n$ leaves, and $a_1, .., a_n \in V$ write
\begin{align*}
  \tau( a_1, .., a_n ),
\end{align*}
as the corresponding bracketing.
%for $\tau$ with the leaves labeled $a_1, .., a_n$ from left to right.
%
We extend this definition
to the case where
(some of) the $a_i$ are planar trees (with labeled leaves) themselves, by just replacing the respective leaf of $\tau$ with $a_i$.
(This is consistent, when considering $a \in V$ as the tree with exactly on vertex, labeled $a$.)

%Define
%\newcommand\ANTI{\mathsf{Anti}}
%\begin{align*}
%    \ANTI   &:= \text{ the two sided ideal in the free operad, generated by antisymmetry}.
%    %AT  &:= \text{ the two sided ideal in the free magma, generated by antisymmetry and tortkara}.
%\end{align*}

On every new level $n+1$, it is enough to check that $\specialTree_{n+1}$
can be expressed in terms of left brackets:
\begin{proposition}\label{prop:specialtrees}
  Assume that $\mathfrak B$ is symmetric or anti-symmetric.

  Assume, for some $n$, that all trees $\tau$ with $\leaves{\tau} \le n$ can be expressed
  %in terms of left brackets, i.e. for all $a_1,..,a_n \in \{1,2,..\}$
  in terms of left brackets, i.e.
  for some $c(\tau,\sigma) \in \R$,
  \begin{align*}
    \tau(a_1,..,a_n) = \sum_{\sigma \in S_n} c(\tau,\sigma) \leftBracketTree_n(a_{\sigma(1)},\dots,a_{\sigma(n)}). % \qquad \text{ (mod A) },
    \quad \forall a_1,..,a_n \in V.
  \end{align*}
  Assume that (every labeling of) $\specialTree_{n+1}$ can be expressed in terms of left brackets. Then:
  \begin{center}
  (every labeling of) every tree $\sigma$ with $\leaves{\sigma} = n + 1$ can be expressed\newline
  in terms of left brackets.
  \end{center}

\end{proposition}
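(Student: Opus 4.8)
The plan is to prove the inductive step directly by a \emph{cherry‑collapse} reduction, treating the two standing hypotheses — that every tree with at most $n$ leaves is left‑bracketable, and that $\specialTree_{n+1}$ is left‑bracketable — as black boxes. The guiding observation is that once we collapse a pair of sibling leaves into a single super‑atom, the tree becomes one with only $n$ leaves, to which the first hypothesis applies, and the \emph{only} shape that then resists being straightened into a left comb is precisely the special‑tree shape.

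First I would fix a complete binary planar tree $\sigma$ with $\leaves{\sigma}=n+1$ and arguments $a_1,\dots,a_{n+1}\in V$. Since $n+1\ge 2$, the tree $\sigma$ contains a \emph{cherry}, i.e.\ an internal node both of whose children are leaves, say the planar‑adjacent leaves labelled $a_i,a_{i+1}$ (take a deepest internal node). Collapsing this cherry to a single leaf carrying the super‑atom $Z:=\mathfrak B(a_i,a_{i+1})\in V$ yields a complete binary tree $\sigma'$ with exactly $n$ leaves, and $\sigma(a_1,\dots,a_{n+1})=\sigma'(\dots,Z,\dots)$. Applying the hypothesis for $\le n$ leaves to $\sigma'$ expresses it as a linear combination of left combs $\leftBracketTree_n$ of its $n$ super‑atoms, exactly one of which is $Z$; by linearity it then suffices to left‑bracketise a single such comb.

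Next I would analyse one comb $\leftBracketTree_n(X_1,\dots,X_n)$ in which $Z$ occupies position $j$ (the other $X$'s being the original leaves outside the cherry, and $X_j=Z$). Writing it as $\leftBracketTree_{n-j+1}(U,X_{j+1},\dots,X_n)$ with initial segment $U:=\leftBracketTree_j(X_1,\dots,X_{j-1},Z)=\mathfrak B\bigl(\leftBracketTree_{j-1}(X_1,\dots,X_{j-1}),\,\mathfrak B(a_i,a_{i+1})\bigr)$, one recognises $U$ as exactly (a labelling of) $\specialTree_{j+1}=\leftBracketTree_{j-1}\rightarrow_\bullet\leftBracketTree_2$. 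Hence $U$ is left‑bracketable: if $j+1\le n$ this is the hypothesis for $\le n$ leaves; if $j=n$ there is nothing to the right of $U$ and $U$ is the whole comb $\specialTree_{n+1}$, handled by the special‑tree hypothesis; the degenerate case $j=1$ gives $U=Z=\leftBracketTree_2(a_i,a_{i+1})$ directly. Substituting any expansion $U=\sum_\pi c_\pi\,\leftBracketTree_{j+1}(\dots)$ and then reattaching $X_{j+1},\dots,X_n$ one at a time on the right turns each summand into $\leftBracketTree_{n+1}$ of the $n+1$ atomic leaves, since $\mathfrak B(\leftBracketTree_p(\,\cdot\,),y)=\leftBracketTree_{p+1}(\,\cdot\,,y)$ by definition of the left comb. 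This exhibits $\sigma$ as a combination of left combs and closes the induction.

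The step I expect to require the most care — and the only place where the special‑tree hypothesis is actually consumed — is identifying the initial segment $U$ after the collapse as $\specialTree_{j+1}$, and checking that everything to the right of $U$ reattaches without leaving the left‑comb shape. I would also note that the reduction itself is purely structural and does not seem to invoke (anti)symmetry of $\mathfrak B$ directly: symmetry or anti‑symmetry enters only through the validity of the two hypotheses being fed in. Finally, one must keep track of the edge cases $j=1$ (empty left segment) and $j=n$ (empty right segment), and of the base of the outer induction on $n$, where $\sigma=\leftBracketTree_{n+1}$ is already a left comb.
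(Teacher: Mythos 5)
Your proof is correct, but it takes a genuinely different route from the paper's. The paper splits $\sigma$ at the \emph{root} into $T_1,T_2$ with $m\geq\ell$ leaves (this is the one place its argument invokes the (anti)symmetry of $\mathfrak B$), reduces both sides to left combs, and then runs an inner recursion on $\ell$: it regroups the comb into $m+1$ compound atoms $t_1,\dots,t_{m+1}$, applies the $\leq n$ hypothesis to that regrouped tree, and inspects the rightmost slot of each resulting comb to descend from $\ell$ to $\ell-1$, terminating at $\ell=1$ (already a comb) or $\ell=2$ (the special tree). You instead collapse a deepest \emph{cherry} into the super-atom $Z=\mathfrak B(a_i,a_{i+1})$, apply the $n$-leaf hypothesis once to the collapsed tree, and observe that in each resulting comb the initial segment up to and including $Z$ is a labelling of $\mathsf{SpecialTree}_{j+1}$ --- covered by the $\leq n$ hypothesis when $j\leq n-1$, and by the special-tree hypothesis exactly when $j=n$ --- after which the tail $X_{j+1},\dots,X_n$ reattaches on the right without leaving the comb shape. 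Both arguments are sound. Yours is a single-pass reduction with no inner induction, and, as you correctly point out, it consumes only bilinearity of $\mathfrak B$: the (anti)symmetry assumption is never used in your derivation, so your version of the inductive step is valid for an arbitrary bilinear $\mathfrak B$, which is slightly more general than the statement as posed. The cost is the careful case split on the position $j$ of $Z$ (with the degenerate cases $j=1$ and $j=n$), which you handle explicitly.
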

\begin{proof}
  Consider
  \begin{align*}
    \tau = \Forest{ [ [$T_1$, for tree={fill=none, child anchor = south}], [$T_2$, for tree={fill=none, child anchor = south}] ] ] },
  \end{align*}
  with
  \begin{align*}
    T_1 &= \tau_1(a_1,..,a_m) \\
    T_2 &= \tau_2(a_{m+1},..,a_{m+\ell})
  \end{align*}
  with $\leaves{\tau} = n+1 = m + \ell$.
  By using symmetry/antisymmetry, we can assume $\leaves{\tau_1} = m \ge \leaves{\tau_2} = \ell$.

  By assusmption, we can write both $\tau_1$ and $\tau_2$ in terms of left-bracketings.
  It is hence enough to consider
  \begin{align*}
    \tau_1 = \leftBracketTree_m \\
    \tau_2 = \leftBracketTree_\ell,
  \end{align*}
  with $m + \ell = n+1$ and $m \ge \ell$.

  Claim: we can reduce to $\ell=1$ and $\ell=2$.

  Indeed, write
  \begin{align*}
    \tau_1 &= ((t_1, t_2), .., t_{m-1}) \\
    \tau_2 &= (t_m, t_{m+1}),
  \end{align*}
  with
  \begin{align*}
    t_1 &= (a_1,a_2) \\
    t_2 &= a_3 \\
        &.. \\
    t_{m-1} &= a_m \\
    t_m &= ((a_{m+1}, a_{m+2}), .., a_{m+\ell-1}) \\
    t_{m+1} &= a_{m + \ell}.
  \end{align*}

  If $\ell \ge 2$ we have $m + 1 \le n$.
  Hence by assumption
  \begin{align*}
    \tau = \sum \text{ left bracketings }(t_1, .., t_{m+1}).
  \end{align*}
  Now, consider the rightmost spot in each leftbracketing.
  \begin{itemize}
  \item If it is taken by a letter: $\leadsto \ell = 1$.
  \item If it is taken by $t_1$: $\leadsto \ell = 2$.
  \item If it is taken by $t_m$: $\leaves{t_m} = \leaves{\tau_2} - 1= \ell - 1$.
      So we go from $\ell$ to $\ell-1$.
  \end{itemize}
  We can finish by induction.

  %Note that we never used tortkara. This is because it is necessary for starting the induction in $n$.
\end{proof}

\printindex

\newpage

\end{document}